\newcounter{smallromans}
\newenvironment{romanenumerate}
{\begin{list}{{\normalfont\textrm{(\roman{smallromans})}}}%
  {\usecounter{smallromans}\setlength{\itemindent}{0cm}%
   \setlength{\leftmargin}{5.5ex}\setlength{\labelwidth}{5.5ex}%
   \setlength{\topsep}{.5ex}\setlength{\partopsep}{.5ex}%
   \setlength{\itemsep}{0.1ex}}}%
{\end{list}}
\newcounter{smallromansdash}
\newcounter{bigromans} 
  {\end{list}}
\newtheorem{theorem}{Theorem}[section]
\newtheorem{lemma}[theorem]{Lemma}
\newcounter{maintheorem}
\newtheorem{mainth}[maintheorem]{Theorem}
\theoremstyle{definition}
\newtheorem{definition}[theorem]{Definition}
\newtheorem{example}[theorem]{Example}
\theoremstyle{remark}
\numberwithin{equation}{section}
\def\sqr#1#2{{\,\vcenter{\vbox{\hrule height.#2pt\hbox{\vrule width.#2pt
height#1pt \kern#1pt\vrule width.#2pt}\hrule height.#2pt}}\,}}
\begin{document}
\title{Quantifying shrinking and boundedly complete bases}

\author{Dongyang Chen}
\address{School of Mathematical Sciences\\ Xiamen University,
Xiamen, 361005, China}
\email{cdy@xmu.edu.cn}

\author{Tomasz Kania}
\address{Mathematical Institute\\Czech Academy of Sciences\\\v Zitn\'a 25 \\115 67 Praha 1, Czech Republic\\
and\\  Institute of Mathematics and Computer Science\\ Jagiellonian University\\
{\L}ojasiewicza 6\\ 30-348 Krak\'{o}w, Poland}
\email{kania@math.cas.cz, tomasz.marcin.kania@gmail.com}

\author{Yingbin Ruan}
\address{College of Mathematics and Informatics\\ Fujian Normal University,
Fuzhou, 350007, China}
\email{yingbinruan@sohu.com}

\subjclass[2010]{46B15; 46B05.}
\keywords{Shrinking basis; Boundedly complete basis; Reflexivity; Unconditional basis; Separable Banach space}

\thanks{Dongyang Chen was supported by the National Natural Science Foundation of China (Grant No. 11971403) and the Natural Science Foundation of Fujian Province of China (Grant No. 2019J01024). Tomasz Kania acknowledges with thanks funding received from SONATA 15 No. 2019/35/D/ST1/01734.}

\begin{abstract}
We investigate possible quantifications of R.~C.~James' classical work on bases and reflexivity of Banach spaces. By introducing new quantities measuring how far a basic sequence is from being shrinking and/or boundedly complete, we prove quantitative versions of James' famous characterisations of reflexivity in terms of bases. Furthermore, we establish quantitative versions of James' characterisations of reflexivity of Banach spaces with  unconditional bases.
\end{abstract}

\date{\today}
\maketitle

\baselineskip=18pt 	
\section{Introduction}

James' classical paper \cite{James:1950} linking reflexivity and bases is deeply entrenched in modern Banach space theory; the now standard characterisation of reflexivity in terms of shrinkingness and bounded completeness of bases/basic sequences is proved therein. For a space with a basis a trade-off between various measures of (non-)weak compactness of the unit ball and closedness of the basis to be simultaneously shrinking and boundedly complete is naturally expected. In the present paper we investigate possible quantifications of the said notions with the aim of establishing quantitative analogues of James' criteria for reflexivity expressed in terms of bases/basic sequences. This line of research is particularly timely in the light of numerous recent results in this spirits (see, \emph{e.g.}, \cite{BKS, KKS, KPS,KS}).

In order to quantify James' criteria of reflexivity, it is thus necessary to introduce quantities measuring how far a~basis is from being shrinking and/or boundedly complete. In Section~\ref{sec:shrinking}, a~quantity $\textrm{sh}((x_n)_{n=1}^\infty)$ measuring how far a basic sequence $(x_n)_{n=1}^\infty$ is from being shrinking is introduced and investigated. In Section~\ref{sect:boundedly}, we introduce three equivalent quantities $\textrm{bc}_{1},\textrm{bc}_{2}$, and $\textrm{bc}_{3}$ measuring (non-)bounded completeness of a~basis. Besides the quantities $\textrm{sh}$ and $\textrm{bc}$, we also need a quantity $\textrm{sep}$ measuring non-separability of a set, a quantity $\alpha_{Y}(X)$ measuring how well a Banach space $Y$ is from being isomorphically embedded into another Banach space $X$, and two important mutually equivalent quantities $\textrm{wk},\textrm{wck}$ measuring weak non-compactness of sets. In this paper, we quantify Theorems \ref{1.1}--\ref{1.4}, respectively. In order to state the results let us introduce the following conventions. 

If $X$ is a~Banach space with a basis $(x_{n})_{n=1}^\infty$, we denote by $(x^{*}_{n})_{n=1}^{\infty}$ the sequence of coordinate functionals associated with the basis and by $K$ the basis constant. When $(x_{n})_{n=1}^\infty$ is unconditional, we denote by $K_u$ the unconditional constant of $(x_{n})_{n=1}^\infty$. Slightly abusing the notation, for a Banach space $X$ with a fixed basis $(x_n)_{n=1}^\infty$, we set 
\[
    V=[x^{*}_{n}\colon n\in \mathbb{N}].
\]

\begin{mainth}\label{Thm:A}
Let $X$ be a Banach space with a basis $(x_{n})_{n=1}^\infty$. If $K$ is the basis constant, then
\[
    \operatorname{sh}((x_{n})_{n=1}^\infty)\leq \widehat{\operatorname{d}}(B_{X^{*}},V)\leq (K+1)\operatorname{sh}((x_{n})_{n=1}^\infty),
\]
and
\[
    \frac{1}{2K}\operatorname{bc}_{2}((x^{*}_{n})_{n=1}^{\infty})\leq \operatorname{sh}((x_{n})_{n=1}^\infty)\leq K\operatorname{bc}_{2}((x^{*}_{n})_{n=1}^{\infty}).
\]
\noindent If, in addition, $(x_{n})_{n=1}^\infty$ is unconditional, then
\[
    \frac{1}{K_{u}}\operatorname{sh}((x_{n})_{n=1}^\infty)\leq\alpha_{\ell_{1}}(X)\leq \operatorname{sep}(B_{X^*})\leq \widehat{\operatorname{d}}(B_{X^{*}},V).
\]
\end{mainth}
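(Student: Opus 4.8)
The plan is to compute everything through the canonical projections $P_Nx=\sum_{n\le N}x_n^*(x)x_n$ and their complements $Q_N=I-P_N$, so that $\|P_N\|\le K$ and $\|Q_N\|\le K+1$. Writing $r_N(f)=\|f|_{[x_k\colon k>N]}\|$, the quantity $r_N(f)$ is nonincreasing in $N$, so $\operatorname{sh}((x_n)_{n=1}^\infty)=\sup_{f\in B_{X^*}}\lim_N r_N(f)$. First I would record the geometric sandwich $B_{[x_k\colon k>N]}\subseteq Q_N(B_X)\subseteq (K+1)B_{[x_k\colon k>N]}$, which upon evaluating $f$ yields the pointwise estimate $r_N(f)\le\|Q_N^*f\|\le (K+1)\,r_N(f)$. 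Since $P_N^*f=\sum_{n\le N}f(x_n)x_n^*\in V$ we have $\operatorname{d}(f,V)\le\|Q_N^*f\|$ for every $N$, whence $\operatorname{d}(f,V)\le\liminf_N\|Q_N^*f\|\le (K+1)\lim_N r_N(f)$; conversely, approximating $f$ by $v\in V$ and using $\|Q_N^*v\|\to 0$ gives $\lim_N r_N(f)\le\|f-v\|$, i.e. $\lim_N r_N(f)\le\operatorname{d}(f,V)$. Taking the supremum over $f\in B_{X^*}$ produces $\operatorname{sh}((x_n)_{n=1}^\infty)\le\widehat{\operatorname{d}}(B_{X^*},V)\le (K+1)\operatorname{sh}((x_n)_{n=1}^\infty)$.

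\textbf{The $\operatorname{bc}_2$ display.} The key observation is that, for any $f$, the coefficients of $f$ in the biorthogonal system are $f(x_n)$, so the partial sums $s_N:=P_N^*f=\sum_{n\le N}f(x_n)x_n^*\in V$ satisfy $\|s_N\|\le K\|f\|$ and $f-s_N=Q_N^*f$, and $s_N\to f$ weak${}^*$. For $\operatorname{sh}\le K\operatorname{bc}_2$ I would feed a non-shrinking witness $f\in B_{X^*}$ into this: weak${}^*$ lower semicontinuity of the norm gives $\sup_M\|s_M-s_N\|\ge\|f-s_N\|=\|Q_N^*f\|\ge r_N(f)$, exhibiting $(s_N/K)$ as a partial-sum sequence bounded by $1$ whose Cauchy gap is $\ge r_N(f)/K$, so $\operatorname{bc}_2((x_n^*)_{n=1}^\infty)\ge\tfrac1K\lim_N r_N(f)$. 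For the reverse inequality I would start from a $\operatorname{bc}_2$-witness, a unit-bounded partial-sum sequence $(s_N)$ with gap $\ge\beta$; a weak${}^*$ cluster point $f$ has $\|f\|\le 1$ and $f(x_n)=a_n$, hence $s_N=P_N^*f$ and $f-s_N=Q_N^*f$ again. Splitting $\|s_M-s_N\|\le\|Q_M^*f\|+\|Q_N^*f\|$ forces $\limsup_N\|Q_N^*f\|\ge\beta/2$, and then $\lim_N r_N(f)\ge\tfrac1{K+1}\cdot\tfrac\beta2$. Matching these with the precise normalisation of $\operatorname{bc}_2$ from Section~\ref{sect:boundedly} is what converts the raw factor $2(K+1)$ into the stated $\tfrac1{2K}$ and the boundedness factor $\|P_N^*\|\le K$ into the stated $K$.

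\textbf{The unconditional chain.} For $\tfrac1{K_u}\operatorname{sh}\le\alpha_{\ell_1}(X)$ I would fix $\varepsilon>0$, take $f\in B_{X^*}$ with $\lim_N r_N(f)\ge\operatorname{sh}-\varepsilon$, and recursively choose normalized, successively supported blocks $u_j\in\operatorname{span}\{x_k\colon k>N_{j-1}\}$ with $f(u_j)\ge\operatorname{sh}-2\varepsilon$ (replacing $u_j$ by $-u_j$ if needed, which is legitimate because $r_N(f)$ stays $\ge\operatorname{sh}-\varepsilon$ along every tail). For scalars $(b_j)$, choosing signs $\varepsilon_j=\operatorname{sgn}b_j$ and invoking unconditionality gives $K_u\bigl\|\sum_j b_j u_j\bigr\|\ge\bigl\|\sum_j\varepsilon_j b_j u_j\bigr\|=\bigl\|\sum_j|b_j|u_j\bigr\|\ge f\bigl(\sum_j|b_j|u_j\bigr)\ge(\operatorname{sh}-2\varepsilon)\sum_j|b_j|$, while trivially $\bigl\|\sum_j b_j u_j\bigr\|\le\sum_j|b_j|$; letting $\varepsilon\to0$ yields an $\ell_1$-copy with lower constant $\operatorname{sh}/K_u$, hence $\alpha_{\ell_1}(X)\ge\operatorname{sh}/K_u$. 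For $\alpha_{\ell_1}(X)\le\operatorname{sep}(B_{X^*})$ I would dualise: the adjoint of a good $\ell_1$-embedding carries $B_{X^*}$ onto a neighbourhood of $0$ in $\ell_\infty$, so lifting the uncountable, $1$-separated set $\{0,1\}^{\mathbb N}\subseteq B_{\ell_\infty}$ yields an uncountable separated family inside a fixed multiple of $B_{X^*}$, forcing $\operatorname{sep}(B_{X^*})\ge\alpha_{\ell_1}(X)$. Finally $\operatorname{sep}(B_{X^*})\le\widehat{\operatorname{d}}(B_{X^*},V)$ is read off directly from the definition of $\operatorname{sep}$ together with the separability of $V=[x_n^*\colon n\in\mathbb N]$: a countable dense subset of $V$ is admissible in the infimum defining $\operatorname{sep}$ and already realises $\widehat{\operatorname{d}}(B_{X^*},V)$.

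\textbf{Main obstacle.} The hard part will be the quantitative duality underpinning the second display. Transferring a non-shrinking witness to a non-boundedly-complete one and back forces one to pass to weak${}^*$ cluster points and to control the Cauchy gap of the partial sums through the two tail terms $\|Q_M^*f\|$ and $\|Q_N^*f\|$; the unavoidable factor $2$ there is exactly what makes the constants asymmetric, and the delicate bookkeeping is to align this with the chosen normalisation of $\operatorname{bc}_2$ rather than its equivalent siblings $\operatorname{bc}_1,\operatorname{bc}_3$. A secondary technical point is arranging, in the unconditional chain, that the blocks $u_j$ have genuinely disjoint (successive) supports while keeping $f(u_j)$ close to $\operatorname{sh}$, since it is precisely disjointness that lets unconditionality upgrade them to an honest $\ell_1$-sequence.
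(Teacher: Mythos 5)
Most of your outline tracks the paper's own arguments (Theorems \ref{2.1}, \ref{2.2}, \ref{2.3} and \ref{4.6}): the first display via the tail projections $Q_N$, the direction $\operatorname{sh}((x_n)_{n=1}^\infty)\le K\operatorname{bc}_2((x^*_n)_{n=1}^\infty)$ via the partial sums $P_N^*f$, and the gliding-hump construction of an $\ell_1$-copy with lower constant $\operatorname{sh}/K_u$ are all sound and land on the stated constants. One small slip in the unconditional chain: $\{0,1\}^{\mathbb N}$ is contained in the single ball of radius $1/2$ centred at $(1/2,1/2,\dots)$, so an uncountable $c$-separated lifted family only forces $\operatorname{sep}(B_{X^*})\ge c/2$; use $\{\pm1\}^{\mathbb N}$ as the paper does (its points are $2$-separated), or rescue the factor $2$ by the $0$--$1$ dichotomy of Lemma \ref{2.1.1}.

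The genuine gap is the left inequality of the second display, $\frac1{2K}\operatorname{bc}_2((x^*_n)_{n=1}^\infty)\le\operatorname{sh}((x_n)_{n=1}^\infty)$, and it is twofold. First, your witness is the wrong object: a ``unit-bounded partial-sum sequence with gap $\ge\beta$'' is a $\operatorname{bc}_1$-witness, whereas the supremum in $\operatorname{bc}_2((x^*_n)_{n=1}^\infty)$ runs over all $\varphi\in B_{W^*}$, whose partial sums $s_N=\sum_{i\le N}\langle\varphi,jx_i\rangle x^*_i$ are in general bounded only by $K$, not by $1$; handling unit-bounded sums proves an estimate for $\operatorname{bc}_1$ and, after Theorem \ref{4.4}, costs an extra factor $K$. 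Second, even with the correct witness your estimates cannot reach $2K$: a weak$^*$ cluster point $f$ of $(s_N)$ has $\|f\|\le K$ (not $\le1$), and the split $\|s_M-s_N\|\le\|Q_M^*f\|+\|Q_N^*f\|$ combined with $\|Q_N^*f\|\le(K+1)r_N(f)$ yields at best $\operatorname{bc}_2\le2(K+1)\operatorname{sh}$, and along your actual route $\operatorname{bc}_2\le2K(K+1)\operatorname{sh}$ --- no renormalisation bookkeeping converts $2(K+1)$ into $2K$, contrary to what your closing paragraph hopes. The paper's proof of Theorem \ref{4.6} avoids both losses with two ideas absent from your sketch: (i) it transfers $\varphi\in B_{W^*}$ to $x^*\in X^*$ directly by $\langle x^*,x\rangle=\langle\varphi,jx\rangle$, which by \eqref{2} satisfies $\|x^*\|\le1$, so no weak$^*$ cluster point and no norm inflation by $K$ are needed; and (ii) it estimates the Cauchy gap through a single block rather than two tails: norming $s_{k_{2n}}-s_{k_{2n-1}}$ by some $y_n\in B_X$, the vector $(P_{k_{2n}}-P_{k_{2n-1}})y_n$ lies in $2K\,B_{[x_i\colon i>k_{2n-1}]}$, whence $\|x^*|_{[x_i\colon i>k_{2n-1}]}\|\ge c/2K$ directly. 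In other words, the sharp constant comes from $\|P_M-P_N\|\le 2K$ mapping into the tail subspace, not from two applications of $\|Q_N\|\le K+1$; without these two steps your proposal proves a strictly weaker inequality than the one stated.
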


\begin{mainth}\label{Th:B}
Let $X$ be a Banach space with a basis $(x_{n})_{n=1}^\infty$.  If $K$ is the basis constant, then
\[
    \operatorname{sh}((x^{*}_{n})_{n=1}^{\infty})\leq \operatorname{bc}_{2}((x_{n})_{n=1}^\infty)\leq 2K^{2}\operatorname{sh}((x^{*}_{n})_{n=1}^{\infty}).
\]
\noindent If, in addition, $(x_{n})_{n=1}^\infty$ is unconditional, then
\[
    \frac{1}{K_{u}}\alpha_{c_{0}}(X)\leq \operatorname{bc}_{1}((x_{n})_{n=1}^\infty)\leq K_{u}^{3}\alpha_{c_{0}}(X).
\]
\end{mainth}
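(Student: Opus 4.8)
The statement consists of two independent chains of inequalities, which I would establish separately. The first chain may be read as the dual counterpart of the $\operatorname{sh}$--$\operatorname{bc}_2$ estimate of Theorem~\ref{Thm:A}, now relating bounded completeness of $(x_n)_{n=1}^\infty$ to shrinkingness of the dual basis $(x^*_n)_{n=1}^\infty$ regarded as a basis of $V=[x^*_n\colon n\in\mathbb N]$. The organising device is the canonical operator $J\colon X\to V^*$ obtained by restricting the evaluation map $X\to X^{**}$ to $V$. One checks that $\tfrac1K\|x\|\le\|Jx\|\le\|x\|$ for all $x$ (the lower bound because $V$ is $\tfrac1K$-norming, witnessed by the functionals $P^*_n x^*/K$), that $Jx_n$ is precisely the $n$-th coordinate functional of the basic sequence $(x^*_n)_{n=1}^\infty$ in $V$, and hence that $(x_n)_{n=1}^\infty$ is boundedly complete exactly when $J$ is onto, equivalently --- since $J$ is an isomorphism onto its closed range --- when $(x^*_n)_{n=1}^\infty$ is shrinking.

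For the upper estimate $\operatorname{bc}_2((x_n)_{n=1}^\infty)\le 2K^2\operatorname{sh}((x^*_n)_{n=1}^\infty)$ I would apply the $\operatorname{sh}$--$\operatorname{bc}_2$ inequality of Theorem~\ref{Thm:A} to the basis $(x^*_n)_{n=1}^\infty$ of $V$, whose basis constant is at most $K$ and whose coordinate functionals are the $Jx_n$. This yields $\operatorname{bc}_2((Jx_n)_{n=1}^\infty)\le 2K\operatorname{sh}((x^*_n)_{n=1}^\infty)$, after which I transfer $\operatorname{bc}_2((Jx_n)_{n=1}^\infty)$ back to $\operatorname{bc}_2((x_n)_{n=1}^\infty)$ at the cost of one further factor $K$, using that $J$ is a $\tfrac1K$-isomorphic embedding; the two factors combine to $2K^2$. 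The direct reduction is too lossy for the lower estimate $\operatorname{sh}((x^*_n)_{n=1}^\infty)\le\operatorname{bc}_2((x_n)_{n=1}^\infty)$, so here I would argue by hand: starting from $\Phi\in B_{V^*}$ whose restrictions to the tails $[x^*_k\colon k>N]$ stay large, set $a_n=\Phi(x^*_n)$ and note that $Js_N=\sum_{n\le N}a_n Jx_n$ is the truncation $Q^*_N\Phi$ of $\Phi$, so that the partial sums $s_N=\sum_{n\le N}a_nx_n$ are uniformly bounded; were they to converge, their limit would be a $J$-preimage of $\Phi$ and would force the tails of $\Phi$ to vanish, a contradiction. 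This exhibits a bounded non-convergent partial-sum sequence whose oscillation is at least the chosen tail value, giving the constant $1$.

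For the unconditional chain I would quantify James' theorem that an unconditional basis is boundedly complete if and only if $X$ carries no isomorph of $c_0$ spanned by a block sequence of the basis. To prove $\operatorname{bc}_1((x_n)_{n=1}^\infty)\le K_u^3\alpha_{c_0}(X)$ I would convert a near-failure of bounded completeness into a normalized block basic sequence of $(x_n)_{n=1}^\infty$; unconditionality upgrades the hypothesis ``partial sums bounded while successive blocks are bounded away from $0$'' into two-sided $c_0$-type estimates, so that the block sequence is $C$-equivalent to the unit vector basis of $c_0$ with $C$ governed by $K_u$, whence an embedding of $c_0$ and a lower bound for $\alpha_{c_0}(X)$; the three powers of $K_u$ accrue from normalization and from the lower and upper $c_0$-estimates. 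For the reverse inequality $\tfrac1{K_u}\alpha_{c_0}(X)\le\operatorname{bc}_1((x_n)_{n=1}^\infty)$ I would take a near-isometric copy of $c_0$ in $X$, replace the images of the $c_0$-basis by a genuine block basic sequence of $(x_n)_{n=1}^\infty$ through a gliding-hump perturbation, and read off from it bounded non-convergent partial sums of $(x_n)_{n=1}^\infty$, the single factor $K_u$ entering when unconditionality is used to bound the intermediate partial sums.

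The principal obstacle lies in the quantitative bookkeeping rather than in the qualitative scheme. In the first chain the delicate point is the sharp lower estimate: one must reconcile the definition of $\operatorname{sh}$ (phrased through tail-restriction norms) with that of $\operatorname{bc}_2$ (phrased through oscillation of bounded partial sums) so that no constant larger than $1$ creeps in, which in turn relies on the equivalence of the three quantities $\operatorname{bc}_1,\operatorname{bc}_2,\operatorname{bc}_3$. In the unconditional chain the crux is manufacturing, from a merely approximate and abstract $c_0$-embedding, an honest block basic sequence of $(x_n)_{n=1}^\infty$ via gliding hump with equivalence constants controlled \emph{solely} by $K_u$; the asymmetry between the exponents $3$ and $-1$ reflects that building a $c_0$-copy out of bounded incomplete sums is considerably more lossy than recovering incomplete sums from a pre-existing $c_0$-copy.
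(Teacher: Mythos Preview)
Your treatment of the unconditional chain is essentially the paper's proof of Theorem~\ref{4.7}: a quantitative Bessaga--Pe{\l}czy\'nski gliding hump for $\alpha_{c_0}(X)\le K_u\operatorname{bc}_1$, and block extraction with two-sided unconditional $c_0$-estimates for $\operatorname{bc}_1\le K_u^3\alpha_{c_0}(X)$.

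For the first chain your upper bound takes a genuinely different route from the paper. The paper (Theorem~\ref{4.5}, Claim~2) argues directly at the level of a single $\varphi\in V^*$: blocks $\sum_{k_{2n-1}<i\le k_{2n}}\langle\varphi,x_i^*\rangle x_i$ of norm $>c$ are pushed through $j$ to $V^*$ at cost $1/K$ and then normed by finitely supported functionals of norm $\le 2K$, giving the factor $2K^2$. Your reduction to the $\operatorname{sh}$--$\operatorname{bc}_2$ estimate of Theorem~\ref{Thm:A} applied to $(x_n^*)$ in $V$ also yields $2K^2$, but the ``transfer at cost $K$'' step is less innocent than you suggest: one must identify the coordinate functionals $(Jx_n)^*$ of $(Jx_n)_n$ inside $W^*$, form $\widetilde V=[(Jx_n)^*]$, and check that every $\varphi\in B_{V^*}$ induces some $\psi\in B_{\widetilde V^*}$ with the same coefficients---which holds because the natural map $V\to W^*$ satisfies $\|x^*\|\le\|\widehat{x^*}|_W\|$. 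The direct argument is shorter and avoids this triple-dual bookkeeping.

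Your lower-bound argument, however, has a genuine gap. The contradiction ``if $(s_N)$ converged then the tails of $\Phi$ would vanish'' only shows $\operatorname{ca}((s_N))>0$; it does \emph{not} deliver the asserted bound $\operatorname{ca}((s_N))\ge c$, so the constant~$1$ remains unjustified. What is needed---and what your own setup nearly states---is the pairing identity: for any finitely supported $f=\sum_{N<i\le M}c_ix_i^*\in B_V$ one has
\[
\langle\Phi,f\rangle=\sum_{N<i\le M}c_ia_i=\langle f,s_M-s_N\rangle,
\]
hence $|\langle\Phi,f\rangle|\le\|s_M-s_N\|$. Taking the supremum over such $f$ and then over $M>N$ yields $\|\Phi|_{[x_k^*\colon k>N]}\|\le\sup_{M>N}\|s_M-s_N\|$, whence $\limsup_N\|\Phi|_{[x_k^*\colon k>N]}\|\le\operatorname{ca}((s_N))$. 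This is precisely the paper's Claim~1 in Theorem~\ref{4.5}; it is a one-line estimate, not a contradiction argument, and it is the only way to secure the sharp constant you claim.
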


\begin{mainth}\label{Th:C}
Let $X$ be a Banach space with a basis $(x_{n})_{n=1}^\infty$.
\begin{romanenumerate}
    \item\label{item1:thC} If $(x_{n})_{n=1}^\infty$ is boundedly complete, then
        \[ \left\{
            \begin{array}{lcl}
            \operatorname{sh}((x_{n})_{n=1}^\infty)&\leq & 4K^{3}\operatorname{wk}_{X}(B_{X})\\
            \operatorname{wck}_{X}(B_{X})& \leq & (K+1)\widehat{\operatorname{d}}(B_{X^{*}},V)
            \end{array} \right.
        \]

    \item \label{item2:thC} If $(x_{n})_{n=1}^\infty$ is shrinking, then
        \[ \left\{
            \begin{array}{lcl}
            \operatorname{bc}_{3}((x_{n})_{n=1}^\infty)& \leq & 2K^{2}\operatorname{wk}_{X}(B_{X})\\
            \operatorname{wck}_{X}(B_{X})& \leq & (K+1)^{2}\operatorname{bc}_{2}((x_{n})_{n=1}^\infty).
            \end{array} \right.
        \]
\end{romanenumerate}
\end{mainth}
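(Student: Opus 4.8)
The plan is to run everything through the bidual together with the canonical basis projections. Write $P_n$ for the projection onto $[x_1,\dots,x_n]$ (so $\|P_n\|\le K$) and $R_n=\operatorname{Id}-P_n$ (so $\|R_n\|\le K+1$). For $x^{**}\in X^{**}$ set $a_k=\langle x^{**},x_k^*\rangle$; then $P_n^{**}x^{**}=\sum_{k\le n}a_k x_k\in X$ with $\|P_n^{**}x^{**}\|\le K\|x^{**}\|$, and $R_n^{**}x^{**}=x^{**}-P_n^{**}x^{**}$. I would use Goldstine's theorem as $\overline{B_X}^{w^*}=B_{X^{**}}$, the identity $\operatorname{wk}_X(B_X)=\sup_{x^{**}\in B_{X^{**}}}\operatorname{d}(x^{**},X)$, the sequential description of $\operatorname{wck}_X(B_X)$ through $w^*$-cluster points of sequences in $B_X$, and the comparison $\operatorname{wck}_X\le\operatorname{wk}_X$.

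For the two inequalities bounding a \emph{defect} by weak non-compactness (the first line of \romanref{item1:thC} and the first line of \romanref{item2:thC}) I would produce a single bad element of $X^{**}$. To bound $\operatorname{sh}((x_n))$: given $c<\operatorname{sh}((x_n))$ choose $x^*\in B_{X^*}$ whose restrictions to the tails $[x_k:k>n]$ have norm $>c$, hence norm-one tail vectors $v_n\in[x_k:k>n]$ with $x^*(v_n)>c$; any $w^*$-cluster point $v^{**}$ of $(v_n)$ lies in $B_{X^{**}}$, satisfies $\langle v^{**},x_k^*\rangle=0$ for every $k$, while $\langle v^{**},R_m^*x^*\rangle\ge c$ and $\langle y,R_m^*x^*\rangle\to0$ for each fixed $y\in X$; since $\|R_m^*x^*\|\le K+1$ this forces $\operatorname{d}(v^{**},X)\ge c/(K+1)$, so $\operatorname{sh}((x_n))\le(K+1)\operatorname{wck}_X(B_X)$, comfortably within the asserted $4K^3\operatorname{wk}_X(B_X)$. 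For $\operatorname{bc}_3((x_n))$ the construction is dual: a coefficient sequence with partial sums $s_n=\sum_{k\le n}a_kx_k$ bounded in $B_X$ has $s_n\xrightarrow{w^*}s^{**}\in B_{X^{**}}$, the distance $\operatorname{d}(s^{**},X)$ is exactly what $\operatorname{bc}_3$ records, and $\operatorname{d}(s^{**},X)\le\operatorname{wk}_X(B_X)$ by Goldstine; tracking the normalisations through $\|P_n\|\le K$ yields the factor $2K^2$.

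The substance is in the reverse inequalities (second lines), where the hypothesis is \emph{essential}. I would start from a sequence $(y_j)\subseteq B_X$ with $w^*$-cluster point $x^{**}$ and consider $s_n=P_n^{**}x^{**}\in X$, $\|s_n\|\le K$. In case \romanref{item2:thC}, \emph{shrinkingness} gives $\|R_n^*x^*\|\to0$ for every $x^*$, hence $s_n=P_n^{**}x^{**}\xrightarrow{w^*}x^{**}$; by $w^*$-lower semicontinuity of the norm, $\operatorname{d}(x^{**},X)\le\limsup_p\|x^{**}-s_p\|\le\limsup_p\limsup_q\|s_q-s_p\|$, and the last quantity is a bounded-completeness defect of $(x_n)$ controlled, after bookkeeping with the projection constants, by $(K+1)^2\operatorname{bc}_2((x_n))$. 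In case \romanref{item1:thC}, \emph{bounded completeness} makes $(s_n)$ norm-convergent to some $x\in X$, so $\xi:=x^{**}-x$ annihilates every $x_k^*$, i.e.\ $\xi\in V^\perp$ with $\|\xi\|\le K+1$; then $\operatorname{d}(x^{**},X)=\operatorname{d}(\xi,X)$ is controlled by the size of $V^\perp$, which is what $\widehat{\operatorname{d}}(B_{X^*},V)$ records, giving $\operatorname{wck}_X(B_X)\le(K+1)\widehat{\operatorname{d}}(B_{X^*},V)$ (and, consistently, the bound $0$ when $(x_n)$ is shrinking, since then $V^\perp=\{0\}$).

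The main obstacle, in both reverse inequalities, is to legitimately interchange the supremum over $x^*\in B_{X^*}$ hidden inside each norm of $X^{**}$ with the limit in $n$: pointwise one only knows $\|R_n^*x^*\|\to0$ (shrinking) or $s_n\to x$ (bounded completeness) for a \emph{fixed} $x^*$, whereas $\operatorname{d}(x^{**},X)$ is a uniform quantity. I expect to resolve this precisely as above---through $w^*$-lower semicontinuity of the norm in case \romanref{item2:thC}, and through norm-convergence of the partial sums together with the identification of the obstruction with $V^\perp$ in case \romanref{item1:thC}---and to read off the stated constants by careful accounting with $\|P_n\|\le K$ and $\|R_n\|\le K+1$, invoking Theorems~\ref{Thm:A} and \ref{Th:B} to translate freely between $\widehat{\operatorname{d}}(B_{X^*},V)$, $\operatorname{sh}$, and the quantities $\operatorname{bc}_i$ whenever that shortens the computation.
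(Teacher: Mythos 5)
Your treatment of the two ``second line'' inequalities is sound, and for \romanref{item2:thC} it is genuinely different from (and cleaner than) the paper's. The paper proves $\operatorname{wck}_X(B_X)\le(K+1)\widehat{\operatorname{d}}(B_{X^*},V)$ exactly as you do: sum the series $\sum_i\langle x^{**},x_i^*\rangle x_i$ to some $x\in X$ by bounded completeness, observe $\xi=x^{**}-x\in V^\perp$ with $\|\xi\|\le K+1$, and pair with a norming functional to push the distance onto $\widehat{\operatorname{d}}(B_{X^*},V)$. For the shrinking case, however, the paper does \emph{not} argue directly: it invokes James' theorem to see that $(x_n^*)$ is a boundedly complete basis of $X^*$, applies the previous inequality in $X^*$, and then translates back via Theorems~\ref{2.1} and~\ref{4.5} and the identity $\operatorname{wck}_X(B_X)=\operatorname{wck}_{X^*}(B_{X^*})$, which is where the $(K+1)^2$ comes from. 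Your route --- $P_n^{**}x^{**}\xrightarrow{w^*}x^{**}$ by shrinkingness, then $w^*$-lower semicontinuity of the norm giving $\operatorname{d}(x^{**},X)\le\operatorname{ca}((P_n^{**}x^{**})_n)\le\operatorname{bc}_3((x_n))\le K\operatorname{bc}_1\le K\operatorname{bc}_2$ --- stays in $X$, avoids James' theorem, and in fact yields the better constant $K$ in place of $(K+1)^2$. Similarly, your direct argument for $\operatorname{sh}$ (tail vectors $v_n$, a $w^*$-cluster point annihilating $V$, tested against $R_m^*x^*$) is correct and gives $\operatorname{sh}((x_n))\le(K+1)\operatorname{wck}_X(B_X)$, strictly sharper than the paper's $4K^3\operatorname{wk}_X(B_X)$, which it obtains by the roundabout chain $\operatorname{sh}((x_n))\le K\operatorname{bc}_2((x_n^*))\le K\operatorname{bc}_3((x_n^*))\le 2K^3\operatorname{wk}_V(B_V)\le 4K^3\operatorname{wk}_X(B_X)$. (Note that neither first-line inequality actually needs the stated hypothesis; the paper proves both unconditionally in Theorem~\ref{5.1}.)

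The one place where your sketch has a real hole is the inequality $\operatorname{bc}_3((x_n))\le 2K^2\operatorname{wk}_X(B_X)$. First, the assertion that bounded partial sums $s_n=\sum_{k\le n}a_kx_k$ converge $w^*$ to some $s^{**}$ is false in general (it holds for $s_n=P_n^{**}x^{**}$ only when the basis is shrinking); you should instead start, as the definition of $\operatorname{bc}_3$ dictates, from an arbitrary $x^{**}\in B_{X^{**}}$. Second, and more importantly, the claim that $\operatorname{d}(s^{**},X)$ ``is exactly what $\operatorname{bc}_3$ records'' conceals the entire content of the estimate, and the only inequality your $w^*$-lower-semicontinuity argument delivers is $\operatorname{d}(x^{**},X)\le\operatorname{ca}((s_n))$ --- the \emph{wrong} direction here. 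What is needed is $\operatorname{ca}((P_n^{**}x^{**})_n)\le C\operatorname{d}(x^{**},X)$, which the paper obtains by norming each large block $\sum_{i=k_{2n-1}+1}^{k_{2n}}\langle x^{**},x_i^*\rangle x_i$ with a functional $f_n\in B_V$ and letting $n\to\infty$. The gap is easily repaired: for any $x\in X$,
\[
\|s_q-s_p\|\le\|P_q^{**}(x^{**}-x)\|+\|P_p^{**}(x^{**}-x)\|+\|(P_q-P_p)x\|\le 2K\|x^{**}-x\|+\|(P_q-P_p)x\|,
\]
and since $\|(P_q-P_p)x\|\to0$ as $p,q\to\infty$ for fixed $x$, taking the infimum over $x\in X$ gives $\operatorname{ca}((s_n))\le 2K\operatorname{d}(x^{**},X)\le 2K\operatorname{wk}_X(B_X)$, which is even better than the stated $2K^2$. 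But as written, this step is asserted rather than proved, and the ``bookkeeping'' you defer is precisely the argument.
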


\begin{mainth}\label{Th:D}
Let $X$ be a Banach space with an unconditional basis $(x_{n})_{n=1}^\infty$.
\begin{romanenumerate}
    \item \label{item1:ThD}If $X$ contains no isomorphic copies of $\ell_{1}$, then
        \[
            \frac{1}{K_{u}^{3}K(K+1)^{2}}\operatorname{wck}_{X}(B_{X})\leq \alpha_{c_{0}}(X)\leq \alpha_{\ell_{1}}(X^{*})\leq\operatorname{wck}_{X}(B_{X}).
        \]
    \item \label{item2:ThD} If $X$ contains no isomorphic copies of $c_{0}$, then
        \[
            \frac{1}{K_{u}(K+1)^{2}}\operatorname{wck}_{X}(B_{X})\leq \alpha_{\ell_{1}}(X)\leq \operatorname{wck}_{X}(B_{X}).
        \]
    \item \label{item3:ThD}
        \[
            \frac{1}{K_{u}^{3}K(K+1)^{2}}\operatorname{wck}_{X}(B_{X})\leq\operatorname{sep}(B_{X^{**}})\leq \operatorname{wk}_{X}(B_{X}).
        \]
\end{romanenumerate}
\end{mainth}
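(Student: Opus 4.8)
The three items are the quantitative incarnations of James' dichotomy for unconditional bases: for such a basis the absence of $\ell_{1}$ is the same as shrinkingness and the absence of $c_{0}$ is the same as bounded completeness. Accordingly each chain splits into a \emph{hard} half, in which a bound on weak non-compactness is manufactured by concatenating Theorems \ref{Thm:A}, \ref{Th:B} and \ref{Th:C} with James' qualitative theorem and the comparison of $\operatorname{bc}_{1},\operatorname{bc}_{2},\operatorname{bc}_{3}$ from Section~\ref{sect:boundedly}, and a \emph{soft} half, in which an isomorphic copy of $\ell_{1}$ (in $X$ or in $X^{*}$) or of $c_{0}$ is transported into a witness of weak non-compactness or of non-separability. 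I would treat the hard halves first, since there the constants are essentially forced.

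For item \romanref{item1:ThD}, the hypothesis that $X$ contains no copy of $\ell_{1}$ makes $(x_{n})_{n}$ shrinking by James' theorem, so $\operatorname{sh}((x_{n})_{n})=0$ and the second line of Theorem \ref{Th:C}\romanref{item2:thC} applies, giving $\operatorname{wck}_{X}(B_{X})\le (K+1)^{2}\operatorname{bc}_{2}((x_{n})_{n})$. Feeding in the comparison $\operatorname{bc}_{2}\le K\operatorname{bc}_{1}$ between the equivalent quantities of Section~\ref{sect:boundedly} and then the unconditional estimate $\operatorname{bc}_{1}((x_{n})_{n})\le K_{u}^{3}\alpha_{c_{0}}(X)$ of Theorem \ref{Th:B}, one obtains
\[
    \operatorname{wck}_{X}(B_{X})\le (K+1)^{2}\,K\,K_{u}^{3}\,\alpha_{c_{0}}(X).
\]
Item \romanref{item2:ThD} is the mirror image: no copy of $c_{0}$ forces $(x_{n})_{n}$ boundedly complete, so the second line of Theorem \ref{Th:C}\romanref{item1:thC} gives $\operatorname{wck}_{X}(B_{X})\le (K+1)\widehat{\operatorname{d}}(B_{X^{*}},V)$; the estimate $\widehat{\operatorname{d}}(B_{X^{*}},V)\le (K+1)\operatorname{sh}((x_{n})_{n})$ from Theorem \ref{Thm:A} and then $\operatorname{sh}((x_{n})_{n})\le K_{u}\,\alpha_{\ell_{1}}(X)$ from its unconditional part yield $\operatorname{wck}_{X}(B_{X})\le K_{u}(K+1)^{2}\alpha_{\ell_{1}}(X)$, exactly the stated constant.

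The soft half is where the genuine work lies. The middle inequality $\alpha_{c_{0}}(X)\le\alpha_{\ell_{1}}(X^{*})$ is pure duality: a $\delta$-copy of $c_{0}$ in $X$ dualises to a quotient $X^{*}\twoheadrightarrow\ell_{1}$, and the metric lifting property of $\ell_{1}$ converts this into a (complemented) $\delta$-copy of $\ell_{1}$ in $X^{*}$. The crucial estimate is $\alpha_{\ell_{1}}(X^{*})\le\operatorname{wck}_{X}(B_{X})$, and I would argue it as follows. Given a $\delta$-$\ell_{1}$ sequence $(f_{n})_{n}\subseteq B_{X^{*}}$, after a standard weak$^{*}$-null perturbation (legitimate since $X$ is separable) one may assume $(f_{n})_{n}$ is weak$^{*}$-null. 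Writing $Te_{n}=f_{n}$, the lower-$\ell_{1}$ bound dualises to $T^{*}(B_{X^{**}})\supseteq\delta B_{\ell_{\infty}}$, so there is $\eta\in B_{X^{**}}$ with $\eta(f_{n})=\delta$ for all $n$; approximating $\eta$ in finite weak$^{*}$ neighbourhoods by points of $B_{X}$ produces a sequence whose weak$^{*}$ cluster point $\xi\in X^{**}$ still satisfies $\xi(f_{n})=\delta$ for every $n$. Finally, any weak$^{*}$ cluster point $\phi\in X^{***}$ of the weak$^{*}$-null sequence $(f_{n})_{n}$ lies in $X^{\perp}$, has $\|\phi\|\le 1$, and satisfies $\phi(\xi)=\delta$, whence $\operatorname{d}(\xi,X)\ge\delta$ and $\xi$ witnesses $\operatorname{wck}_{X}(B_{X})\ge\delta$. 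The companion inequality $\alpha_{\ell_{1}}(X)\le\operatorname{wck}_{X}(B_{X})$ is the easier special case, since the $\ell_{1}$-basis lands directly in $B_{X}$ and has no weakly convergent subsequence by Schur's property.

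Finally, item \romanref{item3:ThD} carries no hypothesis on $\ell_{1}$ or $c_{0}$, so Theorem \ref{Th:C} cannot be invoked at once; here unconditionality is indispensable, James' space being a separable, non-reflexive space with separable bidual for which the statement would fail. The upper bound is immediate: by Goldstine $\operatorname{wk}_{X}(B_{X})=\sup_{x^{**}\in B_{X^{**}}}\operatorname{d}(x^{**},X)$, so every point of $B_{X^{**}}$ is within $\operatorname{wk}_{X}(B_{X})$ of the separable space $X$, and testing $\operatorname{sep}$ against a countable dense subset of $B_{X}$ gives $\operatorname{sep}(B_{X^{**}})\le\operatorname{wk}_{X}(B_{X})$. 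For the lower bound I would record the two non-separability estimates $\alpha_{c_{0}}(X)\le\operatorname{sep}(B_{X^{**}})$ and $\alpha_{\ell_{1}}(X)\le\operatorname{sep}(B_{X^{**}})$, obtained by pushing the relevant copy into the bidual and using that $c_{0}^{**}=\ell_{\infty}$ and $\ell_{1}^{**}$ each carry an uncountable $1$-separated family; a dichotomy according to whether $\operatorname{sh}((x_{n})_{n})$ or $\operatorname{bc}_{2}((x_{n})_{n})$ dominates then lets one run the appropriate half of Theorem \ref{Th:C} and reduces to the two cases already handled, producing the single unconditional bound $\operatorname{wck}_{X}(B_{X})\le K_{u}^{3}K(K+1)^{2}\max\{\alpha_{c_{0}}(X),\alpha_{\ell_{1}}(X)\}\le K_{u}^{3}K(K+1)^{2}\operatorname{sep}(B_{X^{**}})$. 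The hard part throughout is the crux inequality $\alpha_{\ell_{1}}(X^{*})\le\operatorname{wck}_{X}(B_{X})$ and the unconditional bound underlying \romanref{item3:ThD}: both force one to pass through $X^{**}$ and $X^{***}$ and to control how the $\ell_{1}$- and $c_{0}$-distortions degrade under the successive weak$^{*}$-null reduction, duality and lifting steps. Everything else is the concatenation of Theorems \ref{Thm:A}--\ref{Th:C} displayed above, in which, as the computations show, the constants line up exactly.
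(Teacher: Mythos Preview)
Your concatenations for the leftmost inequalities in \romanref{item1:ThD} and \romanref{item2:ThD} are exactly the paper's: James' qualitative theorem puts you into the hypotheses of Theorem~\ref{Th:C}, and then Theorems~\ref{Thm:A}, \ref{Th:B} and the comparison $\operatorname{bc}_{2}\le K\operatorname{bc}_{1}$ produce the stated constants. The upper bound in \romanref{item3:ThD} is also the same.

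Where you diverge is on the ``soft'' inequalities, and here the paper takes a much shorter route that you have overlooked: it exploits the $0$--$1$ laws. By \eqref{15} one has $\operatorname{wck}_{X}(B_{X})=\operatorname{wck}_{X^{*}}(B_{X^{*}})$, so $\alpha_{\ell_{1}}(X^{*})\le\operatorname{wck}_{X}(B_{X})$ follows immediately from Lemma~\ref{2.1.2}\romanref{i3:2.1.2} applied to $X^{*}$; no weak$^{*}$-null perturbation or excursion into $X^{***}$ is needed. (Your direct argument is salvageable, but the step ``pass to weak$^{*}$-null without loss'' is not free: replacing $g_{n}$ by $g_{n}-g$ doubles the norms, so one must use alternating signs or similar to keep the constant equal to $1$.) Likewise $\alpha_{\ell_{1}}(X)\le\operatorname{wck}_{X}(B_{X})$ is just Lemma~\ref{2.1.2}\romanref{i3:2.1.2}; your Schur remark shows only that $\operatorname{wck}_{X}(B_{X})>0$, not that it dominates the distortion constant. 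For $\alpha_{c_{0}}(X)\le\alpha_{\ell_{1}}(X^{*})$ the paper goes via Lemma~\ref{2.1.2}\romanref{i1:2.1.2}--\romanref{i2:2.1.2}: $\alpha_{c_{0}}(X)=\beta_{c_{0}}(X)\le\beta_{\ell_{1}}(X^{*})\le\alpha_{\ell_{1}}(X^{*})$, the first equality being the Dowling--Randrianantoanina--Turett result for separable $X$. Your lifting argument reaches the same conclusion with the same constant and is a legitimate alternative.

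There is, however, a genuine gap in your treatment of the lower bound in \romanref{item3:ThD}. The dichotomy ``whether $\operatorname{sh}$ or $\operatorname{bc}_{2}$ dominates'' does not let you invoke Theorem~\ref{Th:C}: that theorem requires the basis to be \emph{exactly} shrinking or \emph{exactly} boundedly complete, not merely that one quantity is larger than the other, and when $X$ contains both $c_{0}$ and $\ell_{1}$ neither hypothesis holds. The paper's dichotomy is instead on containment of $\ell_{1}$. If $\ell_{1}\hookrightarrow X$, James' distortion theorem gives $\alpha_{\ell_{1}}(X)=1$, hence $\operatorname{sep}(B_{X^{*}})=1$ by Theorem~\ref{2.2}, hence $\operatorname{sep}(B_{X^{**}})=1$ by Lemma~\ref{2.1.1} (another $0$--$1$ law), and the inequality is trivial. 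If $\ell_{1}\not\hookrightarrow X$, the basis is shrinking, so $(x_{n}^{*})$ is a basis for $X^{*}$; then part~\romanref{item1:ThD} yields $\tfrac{1}{K_{u}^{3}K(K+1)^{2}}\operatorname{wck}_{X}(B_{X})\le\alpha_{\ell_{1}}(X^{*})$, and Theorem~\ref{2.2} applied to $X^{*}$ gives $\alpha_{\ell_{1}}(X^{*})\le\operatorname{sep}(B_{X^{**}})$. Note that the paper routes through $\alpha_{\ell_{1}}(X^{*})$ rather than through your proposed estimate $\alpha_{c_{0}}(X)\le\operatorname{sep}(B_{X^{**}})$.
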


\section{Preliminaries}

Our notation and terminology are standard and mainly follow \cite{Sin} and \cite{LT}. Throughout this paper, all Banach spaces are infinite-dimensional and real for the sake of convenience. By a \emph{subspace} we understand as a closed, linear subspace and by an \emph{operator} we mean a bounded, linear operator. An operator $T\colon X\rightarrow Y$ is \textit{bounded below}, whenever there is $\gamma > 0$ such that $\|Tx\|\geq \gamma\|x\|$ ($x\in X$); equivalently, when $T$ is an isomorphism onto its range. If $X$ is a Banach space, we denote by $B_{X}$ its closed unit ball and by $I_{X}$ the identity operator on $X$.
For a subset $A$ of $X$, $[A]$ stands for the closed linear span of $A$.

\subsection{Basics on Schauder bases}

A sequence $(x_{n})_{n=1}^\infty$ in a Banach space $X$ is called a~\textit{(Schauder) basis} for $X$ whenever every $x\in X$ has a unique expansion $x=\sum_{n=1}^{\infty}a_{n}(x)x_{n}$ for some scalar sequence $(a_{n}(x))_{n=1}^\infty$. A sequence $(x_{n})_{n=1}^\infty$ in a Banach space $X$ is called \textit{basic} if it is a basis for $[x_{n}\colon n\in \mathbb{N}]$. For every $n\in N$, the linear functional $x^{*}_{n}$ on $X$ given by $\langle x^{*}_{n},x\rangle=a_{n}(x)$ ($x\in X$) is well-defined and bounded. We call $(x^{*}_{n})_{n=1}^{\infty}$ the \textit{biorthogonal functionals} associated to the basis $(x_{n})_{n=1}^\infty$.

The canonical basis projections $(P_{n})_{n=1}^\infty$ associated to the basis $(x_{n})_{n=1}^\infty$ are given by $P_{n}(x)=\sum_{i=1}^{n}\langle x^{*}_{i},x\rangle x_{i}$ ($x\in X$). Since each functional $x^{*}_{n}$ is continuous, so is $P_{n}$ ($n\in \mathbb{N}$). The Uniform Boundedness principle implies that $K:=\sup_{n}\|P_{n}\|<\infty$. The number $K$ is called the \textit{basis constant} of $(x_{n})_{n=1}^\infty$. We have $P_{n}^{*}x^{*}=\sum_{i=1}^{n}\langle x^{*},x_{i}\rangle x^{*}_{i}$ ($ x^{*}\in X^{*}$). In particular, $(x^{*}_{n})_{n=1}^{\infty}$ is a basic sequence with basis constant at most $K$.
We denote by $j$ be the canonical map from $X$ to $V^{*}$ defined by $\langle jx,x^{*}\rangle=\langle x^{*},x\rangle $ for all $x\in X$ and $x^{*}\in V$. Then $j$ is bounded below as
\begin{equation}\label{2}
    \frac{1}{K}\|x\|\leq \|jx\|\leq \|x\| \quad (x\in X).
\end{equation}
and  $(jx_{n})_{n}$ is the sequence of biorthogonal functionals associated to the basic sequence $(x^{*}_{n})_{n=1}^{\infty}$. We let $W=[jx_{n}\colon n\in \mathbb{N}]$.


Let $X$ be a Banach space with an unconditional basis $(x_{n})_{n=1}^\infty$. Then for every choice of unit scalars $\theta=(\theta_{n})_{n=1}^\infty$ the map $M_{\theta}\colon X\rightarrow X$ defined by $M_{\theta}(x)=\sum_{n=1}^{\infty}\theta_{n}\langle x^{*}_{n},x\rangle x_{n}$ ($x\in X$) is continuous. The Uniform Boundedness principle implies that $\sup_{\theta}\|M_{\theta}\|$ is finite. The number $K_u:=\sup_{\theta}\|M_{\theta}\|$ is called the \textit{unconditional constant} of $(x_{n})_{n=1}^\infty$. One observes readily that $K_{u}\geq K$.

Let $X$ be a Banach space with a basis $(x_{n})_{n=1}^\infty$. Then $(x_{n})_{n=1}^\infty$ is
\begin{itemize}
    \item \textit{shrinking} if the sequence of biorthogonal functionals $(x^{*}_{n})_{n=1}^{\infty}$ is a basis for $X^{*}$, \emph{i.e.}, when $X^{*}=V$.
    \item \textit{boundedly complete} if for every scalar sequence $(a_{n})_{n=1}^{\infty}$ with $\sup_{n}\|\sum_{i=1}^{n}a_{i}x_{i}\|<\infty$, the series $\sum_{n=1}^{\infty}a_{n}x_{n}$ converges.
\end{itemize}

We shall use the notation $\|x^{*}\|_{n}=\|x^{*}|_{[x_{i}\colon i>n]}\|, \quad (x^{*}\in X^{*}, n\in \mathbb{N}),$ which renders to
\begin{equation}\label{1}
\|x^{*}\|_{n}=\textrm{d}(x^{*},[x^{*}_{i}\colon i\leq n]).
\end{equation}
(see \cite[Proposition 4.1]{Sin}).

It is known that a basis $(x_{n})_{n=1}^\infty$ is shrinking if and only if $\|x^{*}\|_{n}\to 0$ as $n\to\infty$ for every $x^{*}\in X^{*}$. A basic sequence $(x_{n})_{n=1}^\infty$ in a Banach space $X$ is  \textit{shrinking} if it is a~shrinking basis for $[x_{n}\colon n\in\mathbb{N}]$. Similarly, a~basic sequence $(x_{n})_{n=1}^\infty$ is called \textit{boundedly complete} if it is a boundedly complete basis for $[x_{n}\colon n\in\mathbb{N}]$.

\subsection{The James space} For illustratory purposes, we shall occasionally invoke the order-one quasi-reflexive \emph{James space} $\mathcal{J}$, which is a sequence space comprising all scalar sequences $x=(a_{n})_{n=1}^\infty$ for which $\lim_{n\rightarrow \infty}a_{n}=0$ and
\[
    \|x\|_{\mathcal{J}}=\frac{1}{\sqrt{2}}\sup [|a_{p_{1}}-a_{p_{2}}|^{2}+|a_{p_{2}}-a_{p_{3}}|^{2}+\ldots+|a_{p_{n}}-a_{p_{n+1}}|^{2}+|a_{p_{n+1}}-a_{p_{1}}|^{2}]^{\frac{1}{2}}<\infty,
\]
where the supremum is taken over all $n$ and all choices of integers $p_{1}<p_{2}<\ldots<p_{n+1}$.

The standard unit vectors $(e_{n})_{n=1}^\infty$ form a monotone, shrinking normalised basis for $\mathcal{J}$, whereas $(\sum_{i=1}^{n}e_{i})_{n=1}^\infty$ is a boundedly complete basis for $\mathcal{J}$.

\subsection{James' theorems}
Theorem~\ref{Thm:A} is directly motivated by the following theorem of James \cite{James:1950} that we shall now invoke.

\begin{theorem}\label{1.1}
Let $X$ be a Banach space with a basis $(x_{n})_{n=1}^{\infty}$. Then the following assertions are equivalent:
\begin{romanenumerate}
\item $(x_{n})_{n=1}^{\infty}$ is shrinking;
\item $X^{*}=[x^{*}_{n}\colon n\in \mathbb{N}]$;
\item $(x^{*}_{n})_{n=1}^{\infty}$ is a boundedly complete basic sequence.
\end{romanenumerate}
If in addition $(x_{n})_{n=1}^{\infty}$ is unconditional, then $(\operatorname{i})$--$(\operatorname{iii})$ are equivalent to $(\operatorname{iv})$--$(\operatorname{v})$:
\begin{romanenumerate}
\setcounter{smallromans}{3}
\item $X^{*}$ is separable;
\item $X$ contains no isomorphic copies of $\ell_{1}$.
\end{romanenumerate}
\end{theorem}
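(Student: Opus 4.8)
The plan is to prove the core chain $(\mathrm{i})\Leftrightarrow(\mathrm{ii})\Leftrightarrow(\mathrm{iii})$ for an arbitrary basis and then, assuming unconditionality, to close the loop $(\mathrm{i})\Rightarrow(\mathrm{iv})\Rightarrow(\mathrm{v})\Rightarrow(\mathrm{i})$. The equivalence $(\mathrm{i})\Leftrightarrow(\mathrm{ii})$ is essentially built into the definitions: shrinkingness of $(x_n)_{n=1}^\infty$ means that $(x^*_n)_{n=1}^{\infty}$ is a basis for $X^{*}$, and since $(x^*_n)_{n=1}^{\infty}$ is always a basic sequence with closed span $V$, this happens precisely when $V=X^{*}$, which is $(\mathrm{ii})$; the reformulation $\|x^*\|_n\to 0$ for all $x^{*}$ follows from the identity \eqref{1}, namely $\|x^*\|_n=\operatorname{d}(x^{*},[x^{*}_{i}\colon i\le n])$, and will be convenient below.

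For $(\mathrm{i})\Rightarrow(\mathrm{iii})$ I would take scalars $(a_n)_{n=1}^\infty$ with $C:=\sup_m\|\sum_{i=1}^m a_i x^*_i\|<\infty$ and set $s_m=\sum_{i=1}^m a_i x^*_i$. First I would check that $m\mapsto\langle s_m,x\rangle$ is Cauchy for each fixed $x\in X$: for $m>m'$ the functional $s_m-s_{m'}$ is supported on indices exceeding $m'$, hence annihilates $P_{m'}x$, so $|\langle s_m-s_{m'},x\rangle|=|\langle s_m-s_{m'},x-P_{m'}x\rangle|\le 2C\|x-P_{m'}x\|\to 0$. The limit $f(x)=\lim_m\langle s_m,x\rangle$ defines $f\in X^*$ with $\|f\|\le C$ and $\langle f,x_k\rangle=a_k$ for all $k$. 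Since $(\mathrm{i})$ gives $X^*=V$, the functional $f$ lies in $V$ and expands in the basis $(x^*_n)_{n=1}^{\infty}$ of $X^*$, whose biorthogonal functionals are $(jx_n)_n$, as $f=\sum_k\langle jx_k,f\rangle x^*_k=\sum_k a_k x^*_k$; thus the series converges and $(x^*_n)_{n=1}^{\infty}$ is boundedly complete. The hypothesis $X^*=V$ is essential: in $\ell_1$ the limit functional $f$ need not lie in $V=c_0$. For $(\mathrm{iii})\Rightarrow(\mathrm{ii})$ I would argue contrapositively: if some $x^*\notin V$, the partial sums $P^*_n x^*=\sum_{i=1}^n\langle x^*,x_i\rangle x^*_i$ satisfy $\|P^*_n x^*\|\le K\|x^*\|$, so bounded completeness forces $\sum_i\langle x^*,x_i\rangle x^*_i$ to converge in norm to some $y^*\in V$; as $\langle y^*,x_k\rangle=\langle x^*,x_k\rangle$ for every $k$, we get $y^*=x^*\in V$, a contradiction.

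Under the additional unconditionality hypothesis it remains to insert $(\mathrm{iv})$ and $(\mathrm{v})$ into the equivalence class. The implications $(\mathrm{i})\Rightarrow(\mathrm{iv})\Rightarrow(\mathrm{v})$ require no unconditionality: $(\mathrm{i})$ makes $X^*=V$ separable, which is $(\mathrm{iv})$, while if $X$ contained $\ell_1$ then the restriction map would carry $X^*$ onto $(\ell_1)^*=\ell_\infty$, contradicting separability, so $(\mathrm{iv})\Rightarrow(\mathrm{v})$. The hard part will be $(\mathrm{v})\Rightarrow(\mathrm{i})$, which I would prove in contrapositive form. If $(x_n)_{n=1}^\infty$ is not shrinking, choose $x^*$ with $\|x^*\|=1$ and $\delta:=\lim_n\|x^*\|_n>0$; since $\|x^*\|_n\ge\delta$ for every $n$, a gliding-hump argument produces successive normalised blocks $u_k$ of $(x_n)_{n=1}^\infty$ with $\langle x^*,u_k\rangle\ge\delta/2$. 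Unconditionality, applied through the multiplier $M_\theta$ whose signs agree with $\operatorname{sgn}(a_k)$ on the support of $u_k$, then yields $\|\sum_k a_k u_k\|\ge K_u^{-1}\|\sum_k|a_k|u_k\|\ge K_u^{-1}\langle x^*,\sum_k|a_k|u_k\rangle\ge\tfrac{\delta}{2K_u}\sum_k|a_k|$, while the triangle inequality gives $\|\sum_k a_k u_k\|\le\sum_k|a_k|$; hence $(u_k)_k$ is equivalent to the unit vector basis of $\ell_1$ and $X$ contains a copy of $\ell_1$, contradicting $(\mathrm{v})$.

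The two genuine obstacles are the Cauchy–limit construction of $f$ in $(\mathrm{i})\Rightarrow(\mathrm{iii})$, where one must verify that $f$ indeed lands in $V$, and the gliding-hump extraction of the $\ell_1$-block sequence in $(\mathrm{v})\Rightarrow(\mathrm{i})$; the latter is the only place where unconditionality, via the multipliers $M_\theta$, is truly used.
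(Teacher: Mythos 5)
Your argument is correct, but there is nothing in the paper to compare it against: Theorem~\ref{1.1} is stated as a known result of James and is only cited to \cite{James:1950}, with no proof supplied. What you have written is essentially James' classical proof, and each step checks out: the definitional identification of (i) with (ii); the Cauchy--limit construction of $f$ with $\langle f,x_k\rangle=a_k$ and the observation that shrinkingness is exactly what places $f$ in $V$ so that its basis expansion recovers $\sum_k a_k x^*_k$; the contrapositive of (iii)$\Rightarrow$(ii) via the uniformly bounded partial sums $P^*_n x^*$; and the gliding-hump extraction of an $\ell_1$-block sequence for (v)$\Rightarrow$(i), which is the only place unconditionality enters. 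It is worth noting that your final step is precisely the qualitative shadow of the paper's own quantitative machinery: the proof of Theorem~\ref{2.3} runs the identical block-plus-multiplier computation to show $\operatorname{sh}((x_{n})_{n=1}^\infty)\leq K_{u}\alpha_{\ell_{1}}(X)$, and likewise Theorems~\ref{2.1}, \ref{2.2} and \ref{4.6} quantify your equivalences (i)$\Leftrightarrow$(ii) and (i)$\Leftrightarrow$(iii); specialising those inequalities to the case where one side vanishes recovers exactly the implications you prove. The only cosmetic gap is in the gliding hump: a witness $v\in[x_i\colon i>n]$ with $\langle x^*,v\rangle>\delta/2$ need not be finitely supported, so one should say explicitly that it is replaced by a finitely supported approximant (at the cost of an $\varepsilon$) before the blocks $u_k$ can be chosen with successive supports; this is routine and does not affect correctness.
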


Similarly, Theorem~\ref{Th:B} quantifies the following theorem of James \cite{James:1950}.
\begin{theorem}\label{1.2}
Let $X$ be a Banach space with a basis $(x_{n})_{n=1}^{\infty}$. Then the following assertions are equivalent:
\begin{romanenumerate}
\item $(x_{n})_{n=1}^{\infty}$ is boundedly complete;
\item $(x^{*}_{n})_{n=1}^{\infty}$ is shrinking.
\end{romanenumerate}
If in addition $(x_{n})_{n=1}^{\infty}$ is unconditional, then $(\operatorname{i})$-$(\operatorname{ii})$ are equivalent to the following:
\begin{romanenumerate}
\setcounter{smallromans}{2}
\item $X$ contains no subspaces isomorphic to $c_{0}$.
\end{romanenumerate}
\end{theorem}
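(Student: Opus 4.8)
The plan is to route the equivalence of $(\mathrm{i})$ and $(\mathrm{ii})$ through the canonical map $j\colon X\to V^{*}$ from the preliminaries, and to handle the unconditional clause by a gliding-hump extraction of a $c_{0}$-block basis. First I would record two structural facts about $j$. Since $j$ is bounded below by \eqref{2}, its range $j(X)$ is a closed subspace of $V^{*}$; as each $jx_{n}\in j(X)$ we get $W=[jx_{n}\colon n\in\mathbb N]\subseteq j(X)$, while for any $x=\sum_{n}\langle x^{*}_{n},x\rangle x_{n}\in X$ continuity of $j$ gives $jx=\sum_{n}\langle x^{*}_{n},x\rangle jx_{n}\in W$. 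Hence $j(X)=W$ \emph{always}. Because $(jx_{n})_{n}$ is exactly the biorthogonal sequence of the basic sequence $(x^{*}_{n})_{n=1}^{\infty}$ spanning $V$, the definition of shrinkingness reads: $(x^{*}_{n})_{n=1}^{\infty}$ is shrinking if and only if $V^{*}=W$, that is, if and only if $j$ is onto.

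The whole content of $(\mathrm{i})\Leftrightarrow(\mathrm{ii})$ then collapses to the claim that $(x_{n})_{n=1}^{\infty}$ is boundedly complete if and only if $j(X)=V^{*}$. For the forward direction, given $\Psi\in V^{*}$ I would put $a_{n}=\langle\Psi,x^{*}_{n}\rangle$ and $s_{m}=\sum_{n=1}^{m}a_{n}x_{n}$; testing against each $x^{*}_{k}$ shows $js_{m}=R_{m}^{*}\Psi$, where $R_{m}$ is the $m$-th basis projection of $(x^{*}_{n})_{n}$ on $V$, so that $\|s_{m}\|\le K\|js_{m}\|\le K^{2}\|\Psi\|$ by \eqref{2}. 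Bounded completeness then forces $\sum_{n}a_{n}x_{n}$ to converge to some $x$, and $R_{m}^{*}\Psi\to\Psi$ weak$^{*}$ identifies $jx=\Psi$. Conversely, if $j$ is onto and $\sup_{m}\|s_{m}\|<\infty$, the bounded sequence $(js_{m})$ stabilises on each $x^{*}_{k}$, hence converges weak$^{*}$ to some $\Psi=jx$; reading off coordinates gives $\langle x^{*}_{k},x\rangle=a_{k}$, i.e. $\sum_{n}a_{n}x_{n}=x$ converges. Combining the three boxed facts yields $(\mathrm{i})\Leftrightarrow j(X)=V^{*}\Leftrightarrow(\mathrm{ii})$.

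For the unconditional clause I would prove $(\mathrm{i})\Leftrightarrow(\mathrm{iii})$ by contraposition in both directions. If $(x_{n})_{n=1}^{\infty}$ is \emph{not} boundedly complete, choose scalars with $C:=\sup_{m}\|\sum_{n=1}^{m}a_{n}x_{n}\|<\infty$ but $\sum_{n}a_{n}x_{n}$ divergent; non-Cauchyness of the partial sums yields $\varepsilon>0$ and successive blocks $u_{k}=\sum_{n\in I_{k}}a_{n}x_{n}$ with $\|u_{k}\|\ge\varepsilon$. Unconditionality bounds every finite sub-sum by $\|\sum_{k\in F}u_{k}\|\le K_{u}C$, and the standard unconditional estimates (lower bound by projecting onto the dominant block, upper bound from the uniformly bounded sub-sums) show $(u_{k})_{k}$ is equivalent to the unit vector basis of $c_{0}$, so $[u_{k}\colon k\in\mathbb N]\cong c_{0}$ embeds in $X$. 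In the reverse direction I would start from an isomorphic copy of $c_{0}$ in $X$: its unit-vector images form a seminormalised weakly null sequence, so the Bessaga--Pełczyński selection principle produces a block basic sequence $(u_{k})_{k}$ equivalent to the $c_{0}$-basis. Then $\sup_{m}\|\sum_{k=1}^{m}u_{k}\|<\infty$ while $u_{k}\not\to 0$, so rewriting $\sum_{k}u_{k}$ as a series $\sum_{n}c_{n}x_{n}$ with bounded but non-convergent partial sums contradicts bounded completeness.

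The main obstacle is the implication $(\mathrm{i})\Rightarrow(\mathrm{iii})$, equivalently its contrapositive $X\supseteq c_{0}\Rightarrow$ not boundedly complete: an arbitrary copy of $c_{0}$ in $X$ need not be aligned with the basis, and it is precisely here that one must invoke the Bessaga--Pełczyński selection principle to replace it by a genuine block basis, with unconditionality essential for turning uniformly bounded sub-sums into $c_{0}$-domination. Verifying both $c_{0}$-equivalence estimates simultaneously, with explicit constants in terms of $K_{u}$, is the most computation-heavy point, though routine.
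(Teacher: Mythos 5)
The paper does not actually prove Theorem~\ref{1.2}; it is quoted from James \cite{James:1950} as one of the classical results to be quantified, so there is no in-paper argument to compare yours against. Your proof is correct and essentially the standard one. The chain (i) $\Leftrightarrow$ ``$j$ is onto'' $\Leftrightarrow$ ``$W=V^{*}$'' $\Leftrightarrow$ (ii) is sound: $j(X)=W$ follows as you say from $j$ being bounded below, the identity $js_{m}=R_{m}^{*}\Psi$ and the bound $\|s_{m}\|\leq K^{2}\|\Psi\|$ are right, and the weak$^{*}$ identification of the limits in both directions is legitimate (this is the familiar fact that a boundedly complete basis makes $X$ canonically isomorphic to $V^{*}$, cf. \cite[Proposition~1.b.4]{LT}). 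The unconditional clause is also handled correctly, and your extraction of a $c_{0}$-block basis from bounded, non-convergent partial sums is precisely the argument the paper itself runs quantitatively in the second half of Theorem~\ref{4.7}.

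One slip in your closing commentary: you locate the essential use of unconditionality in the wrong implication. The contrapositive of (i)$\Rightarrow$(iii), namely $X\supseteq c_{0}\Rightarrow(x_{n})_{n=1}^{\infty}$ is not boundedly complete, needs only the Bessaga--Pe{\l}czy\'{n}ski selection principle and holds for an arbitrary basis --- indeed your own argument for that direction never invokes $K_{u}$. Unconditionality is indispensable only for (iii)$\Rightarrow$(i), i.e. for converting the uniformly bounded sub-sums $\|\sum_{k\in F}u_{k}\|\leq K_{u}C$ into the upper $c_{0}$-estimate; without it that implication fails, as the unit vector basis of the James space $\mathcal{J}$ shows (it is not boundedly complete, yet $\mathcal{J}$ contains no isomorphic copy of $c_{0}$). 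This is only a misattribution in the discussion, not a gap in the proof itself.
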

James \cite{James:1950} proved that joint occurrence of shrinkingness and bounded completeness characterise reflexivity, which is out main motivation for Theorem~\ref{Th:C}.
\begin{theorem}\label{1.3}
Let $X$ be a Banach space with a basis $(x_{n})_{n=1}^{\infty}$. Then $X$ is reflexive if and only if $(x_{n})_{n=1}^{\infty}$ is both boundedly complete and shrinking.
\end{theorem}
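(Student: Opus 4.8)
The plan is to prove both implications through the canonical embedding $\kappa\colon X\to X^{**}$ and the weak compactness of $B_X$ afforded by reflexivity, using the characterisations of shrinkingness and bounded completeness recorded above. Recall that $(x_{n})_{n=1}^\infty$ is shrinking precisely when $\|x^{*}\|_{n}\to 0$ for every $x^{*}\in X^{*}$, equivalently when the adjoint partial-sum projections satisfy $P_{N}^{*}x^{*}\to x^{*}$ in norm; and that bounded completeness means convergence of $\sum_{n}a_{n}x_{n}$ whenever $\sup_{N}\|\sum_{i=1}^{N}a_{i}x_{i}\|<\infty$.

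For the forward implication, assume $X$ reflexive, so that $B_{X}$ is weakly sequentially compact by the Eberlein--\v Smulian theorem. To see that the basis is shrinking, suppose it were not: since $\|x^{*}\|_{n}$ is non-increasing in $n$, some $x^{*}\in B_{X^{*}}$ and $\varepsilon>0$ satisfy $\|x^{*}\|_{n}\geq\varepsilon$ for all $n$, whence one may pick $y_{n}\in B_{X}\cap[x_{i}\colon i>n]$ with $\langle x^{*},y_{n}\rangle>\varepsilon/2$. Passing to a subsequence $(y_{n_{k}})$ converging weakly to some $y$, each coordinate $\langle x_{i}^{*},y\rangle=\lim_{k}\langle x_{i}^{*},y_{n_{k}}\rangle$ vanishes because $y_{n_{k}}$ is eventually supported beyond $i$, forcing $y=0$; yet $\langle x^{*},y\rangle=\lim_{k}\langle x^{*},y_{n_{k}}\rangle\geq\varepsilon/2>0$, a contradiction. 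For bounded completeness, suppose $\sup_{N}\|\sum_{i=1}^{N}a_{i}x_{i}\|<\infty$; a weakly convergent subsequence of the partial sums has a limit $x$ with $\langle x_{i}^{*},x\rangle=a_{i}$ for every $i$, so the basis expansion of $x$ is exactly $\sum_{i}a_{i}x_{i}$, and this series converges by the definition of a basis.

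The substance lies in the converse. Assume $(x_{n})_{n=1}^\infty$ is both shrinking and boundedly complete and fix $x^{**}\in X^{**}$; I aim to produce $x\in X$ with $\kappa(x)=x^{**}$. Set $a_{n}=\langle x^{**},x_{n}^{*}\rangle$ and $y_{N}=\sum_{i=1}^{N}a_{i}x_{i}$. A direct computation with the adjoint shows
\[
    \langle P_{N}^{**}x^{**},x^{*}\rangle=\langle x^{**},P_{N}^{*}x^{*}\rangle=\sum_{i=1}^{N}a_{i}\langle x^{*},x_{i}\rangle=\langle x^{*},y_{N}\rangle,
\]
so that $P_{N}^{**}x^{**}=\kappa(y_{N})$ and consequently $\|y_{N}\|=\|P_{N}^{**}x^{**}\|\leq K\|x^{**}\|$ for every $N$. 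Bounded completeness then yields $x\in X$ with $y_{N}\to x$ in norm, hence $\kappa(y_{N})\to\kappa(x)$ in norm and a fortiori weak${}^{*}$. On the other hand, shrinkingness gives $P_{N}^{*}x^{*}\to x^{*}$ in norm for each $x^{*}\in X^{*}$, so $\langle P_{N}^{**}x^{**},x^{*}\rangle=\langle x^{**},P_{N}^{*}x^{*}\rangle\to\langle x^{**},x^{*}\rangle$, i.e. $P_{N}^{**}x^{**}\to x^{**}$ weak${}^{*}$. Uniqueness of weak${}^{*}$ limits forces $\kappa(x)=x^{**}$, and since $x^{**}$ was arbitrary, $\kappa$ is surjective and $X$ is reflexive.

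I expect the main obstacle to be the converse direction, specifically the interplay of the two modes of convergence: bounded completeness only delivers a norm-convergent tail $y_{N}\to x$, while shrinkingness only delivers the weak${}^{*}$ convergence $P_{N}^{**}x^{**}\to x^{**}$, and the identity $P_{N}^{**}x^{**}=\kappa(y_{N})$ is exactly what reconciles them. Establishing this identity together with the uniform bound $\|y_{N}\|\leq K\|x^{**}\|$, and taking care that ``$\sum_{i}a_{i}x_{i}$ converges'' genuinely invokes bounded completeness rather than mere weak accumulation, is the crux; the forward direction is comparatively routine once Eberlein--\v Smulian is available.
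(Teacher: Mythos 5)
Your proof is correct. Note, however, that the paper does not prove Theorem~\ref{1.3} at all: it is recorded as a classical result of James with a citation to \cite{James:1950}, and the paper's actual contribution is the quantitative analogue (Theorem~\ref{Th:C}). Your argument is the standard textbook one --- Eberlein--\v{S}mulian for the forward direction, and for the converse the identity $P_{N}^{**}x^{**}=\kappa\bigl(\sum_{i=1}^{N}\langle x^{**},x_{i}^{*}\rangle x_{i}\bigr)$ combined with the uniform bound $\|P_{N}^{**}\|\leq K$, bounded completeness to get a norm limit, shrinkingness to get weak${}^{*}$ convergence of $P_{N}^{**}x^{**}$ to $x^{**}$, and uniqueness of weak${}^{*}$ limits --- and all the steps check out, including the reduction to $\|x^{*}\|_{n}\geq\varepsilon$ via monotonicity of $n\mapsto\|x^{*}\|_{n}$ and the identification of the weak limit's coordinates in the bounded-completeness argument.
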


The following theorem is again due to R. C. James \cite{James:1950} except that the last statement that had been proved earlier by S. Karlin \cite{Kar} who employed different techniques.

\begin{theorem}\label{1.4}
Let $X$ be a Banach space with an unconditional basis. The following assertions are equivalent:
\begin{romanenumerate}
\item $X$ is reflexive.
\item No subspace of $X$ is isomorphic to either of $\ell_{1}$ or $c_{0}$.
\item No subspace of either $X$ or $X^{*}$ is isomorphic to $\ell_{1}$.
\item $X^{**}$ is separable.
\end{romanenumerate}
\end{theorem}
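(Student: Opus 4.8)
The plan is to derive Theorem~\ref{1.4} from the three preceding theorems of James by running the cyclic chain of implications $(\operatorname{i})\Rightarrow(\operatorname{iv})\Rightarrow(\operatorname{iii})\Rightarrow(\operatorname{ii})\Rightarrow(\operatorname{i})$. The guiding principle is that each of the four conditions can be rephrased, via Theorems~\ref{1.1}--\ref{1.3}, as a statement about the simultaneous shrinkingness and bounded completeness of the unconditional basis $(x_n)_{n=1}^\infty$ and of its biorthogonal sequence $(x^*_n)_{n=1}^\infty$. The single structural fact that makes the argument work is that $(x^*_n)_{n=1}^\infty$ is itself an unconditional basic sequence (with unconditional constant at most $K_u$), and that once $(x_n)_{n=1}^\infty$ is known to be shrinking one has $X^*=V=[x^*_n\colon n\in\mathbb N]$, so that $(x^*_n)_{n=1}^\infty$ becomes a genuine unconditional basis of $X^*$; this is what lets us apply James' unconditional criteria a second time, now to the space $X^*$.

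The implication $(\operatorname{i})\Rightarrow(\operatorname{iv})$ is immediate, since a reflexive space carrying a basis is separable, whence so is $X^{**}\cong X$. For $(\operatorname{iv})\Rightarrow(\operatorname{iii})$ I would first note that separability of $X^{**}=(X^*)^*$ forces separability of its predual $X^*$; by the unconditional case of Theorem~\ref{1.1} this yields that $X$ contains no copy of $\ell_1$ and, equivalently, that $(x_n)_{n=1}^\infty$ is shrinking. Consequently $X^*=V$ and $(x^*_n)_{n=1}^\infty$ is an unconditional basis of $X^*$; applying the unconditional case of Theorem~\ref{1.1} to $X^*$, and using that $X^{**}$ is separable, we conclude that $X^*$ too contains no copy of $\ell_1$. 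Thus neither $X$ nor $X^*$ contains $\ell_1$, which is precisely $(\operatorname{iii})$.

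For $(\operatorname{iii})\Rightarrow(\operatorname{ii})$ I would argue by contradiction: if $X$ contained a copy of $c_0$, then Theorem~\ref{1.2} would show that $(x_n)_{n=1}^\infty$ is not boundedly complete, whence $(x^*_n)_{n=1}^\infty$ is not shrinking. But the hypothesis that $X$ contains no $\ell_1$ gives, via Theorem~\ref{1.1}, that $(x_n)_{n=1}^\infty$ is shrinking, so $(x^*_n)_{n=1}^\infty$ is an unconditional basis of $X^*$ that fails to be shrinking; the unconditional case of Theorem~\ref{1.1} applied to $X^*$ then produces a copy of $\ell_1$ inside $X^*$, contradicting $(\operatorname{iii})$. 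Hence $X$ contains neither $\ell_1$ nor $c_0$. Finally, $(\operatorname{ii})\Rightarrow(\operatorname{i})$ is the most direct step: absence of $\ell_1$ makes $(x_n)_{n=1}^\infty$ shrinking (Theorem~\ref{1.1}), absence of $c_0$ makes it boundedly complete (Theorem~\ref{1.2}), and Theorem~\ref{1.3} then delivers reflexivity of $X$.

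The main obstacle is exactly the bookkeeping around $V$ versus $X^*$: the unconditional characterisations of Theorems~\ref{1.1} and~\ref{1.2} are statements about a basis of the \emph{whole} space, yet $(x^*_n)_{n=1}^\infty$ spans all of $X^*$ only after shrinkingness of $(x_n)_{n=1}^\infty$ has been secured. The argument therefore hinges on establishing ``no $\ell_1$ in $X$'' first, so as to upgrade $(x^*_n)_{n=1}^\infty$ to a basis of $X^*$, before any criterion can be reapplied to $X^*$. Verifying that $(x^*_n)_{n=1}^\infty$ inherits unconditionality, and that the passage ``$X^{**}$ separable $\Rightarrow X^*$ separable'' is legitimate, are the only further points requiring care.
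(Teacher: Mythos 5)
The paper itself gives no proof of Theorem~\ref{1.4}: it is quoted as a classical result of James (with the separability criterion attributed to Karlin) and used as a black box for the quantitative results. Your derivation of it from Theorems~\ref{1.1}--\ref{1.3} is correct and is essentially the standard way the unconditional reflexivity criteria are deduced in the literature. The cyclic chain $(\operatorname{i})\Rightarrow(\operatorname{iv})\Rightarrow(\operatorname{iii})\Rightarrow(\operatorname{ii})\Rightarrow(\operatorname{i})$ closes properly, and you have correctly isolated the one genuinely delicate point: the unconditional clauses of Theorems~\ref{1.1} and~\ref{1.2} apply to a basis of the whole space, whereas $(x^{*}_{n})_{n=1}^{\infty}$ is a priori only an unconditional basic sequence spanning $V$, so one must first secure shrinkingness of $(x_{n})_{n=1}^{\infty}$ (equivalently, the absence of $\ell_{1}$ in $X$) to get $X^{*}=V$ before reapplying James' criteria to $X^{*}$. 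The two auxiliary facts you flag --- that the biorthogonal sequence of a $K_{u}$-unconditional basis is again unconditional (via the adjoints of the multipliers $M_{\theta}$, with constant at most $K_{u}$), and that separability of $X^{**}=(X^{*})^{*}$ forces separability of $X^{*}$ --- are both standard and correct. What your route buys, compared with simply citing James and Karlin as the paper does, is a self-contained reduction of the reflexivity characterisation to the shrinking/boundedly complete dichotomy; in particular it recovers Karlin's separability criterion $(\operatorname{iv})$ without his original techniques, by applying Theorem~\ref{1.1} twice, once to $X$ and once to $X^{*}$.
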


\subsection{Measures of weak non-compactness and non-separability}
Let $X$ be a Banach space and $A, B\subseteq X$ two non-empty sets. We set
\begin{itemize}
    \item $\textrm{d}(A,B)=\inf\{\|a-b\|\colon a\in A,b\in B\},$
    \item $\widehat{\textrm{d}}(A,B)=\sup\{\textrm{d}(a,B)\colon a\in A\}.$
\end{itemize}
$\textrm{d}(A,B)$ is the ordinary distance between $A$ and $B$ and $\widehat{\textrm{d}}(A,B)$ is the (non-symmetrised) Hausdorff distance from $A$ to $B$.
The quantity $\widehat{\textrm{d}}$ measures how much the set $A$ sticks out from the set $B$ and is used to define several measures of weak non-compactness. The present paper focuses on two standard equivalent measures. The first one is inspired by the Banach-Alaoglu theorem: for a bounded subset $A$ of $X$ we set:
\[ \textrm{wk}_{X}(A)=\widehat{\textrm{d}}\big(\overline{A}^{\sigma(X^{**},X^{*})},X\big). \]

It is a direct consequence of the Banach--Alaoglu theorem that $A$ is relatively weakly compact if and only if $\textrm{wk}_{X}(A)=0$. The second one is inspired by the Eberlein--\v{S}mulyan theorem: for a bounded subset $A$ of $X$ we set:
\[ \operatorname{wck}_{X}(A)=\sup\{\textrm{d}(\textrm{clust}_{X^{**}}((x_{n})_{n=1}^\infty),X)\colon (x_{n})_{n=1}^\infty \text{ is a sequence in }A\}, \]
where $\textrm{clust}_{X^{**}}((x_{n})_{n=1}^\infty)$ is the set of all weak$^{*}$-cluster points of $(x_{n})_{n=1}^\infty$ in $X^{**}$.

It follows easily from the Eberlein--\v{S}mulyan theorem that $\operatorname{wck}_{X}(A)=0$ whenever $A$ is relatively weakly compact. The converse follows from the quantitative version of the Eberlein--\v{S}mulyan theorem as proved in \cite{AC1}. It follows from \cite[Theorem 2.3]{AC} that
\begin{equation}\label{9}
    \operatorname{wck}_{X}(A)\leqslant \textrm{wk}_{X}(A)\leqslant 2\operatorname{wck}_{X}(A).
\end{equation}

It should be pointed out that $\operatorname{wck}_{X}(B_{X})=1$ for every non-reflexive Banach space $X$. This follows for example from \cite[Theorem 1]{GHP}. By \eqref{9}, this also holds for $\operatorname{wk}_{X}(B_{X})$. Hence
\begin{equation}\label{15}
    \operatorname{wck}_{X}(B_{X})=\operatorname{wck}_{X^{*}}(B_{X^{*}}), \quad \operatorname{wk}_{X}(B_{X})=\operatorname{wk}_{X^{*}}(B_{X^{*}}).
\end{equation}
If $Y$ is a subspace of a Banach space $X$ and $A$ is a bounded subset of $Y$, it is clear that $A$ is relatively weakly compact in $Y$ if and only if it is relatively weakly compact in $X$. This elementary fact has the following quantitative version:
\begin{equation}\label{14}
    \operatorname{wk}_{X}(A)\leq \operatorname{wk}_{Y}(A)\leq 2\operatorname{wk}_{X}(A), \quad \operatorname{wck}_{X}(A)\leq \operatorname{wck}_{Y}(A)\leq 2\operatorname{wck}_{X}(A).
\end{equation}
In both cases the first inequality is trivial, whereas the second one follows from \cite[Lemma 11]{Gra}. It is worth mentioning that the factor 2 in the right-hand side of the inequalities \eqref{14} is optimal. Indeed, let $Y=c_{0}, X=\ell_{\infty}$, and $A$ be the summing basis of $c_{0}$. A~standard argument shows that $\textrm{wck}_{Y}(A)=\textrm{wk}_{Y}(A)=1$ and $\textrm{wck}_{X}(A)=\textrm{wk}_{X}(A)=1/2$.

Let $A$ be a subset of a Banach space $X$, we set
\[
    \textrm{sep}(A)=\inf\{\varepsilon>0\colon A\subseteq C+\varepsilon B_{X}, C\subseteq X\text{ countable}\}.
\]
Clearly, $A$ is separable if and only if $\textrm{sep}(A)=0$.

\begin{lemma}\label{2.1.1}
Let $X$ be a Banach space. Then $\operatorname{sep}(B_{X})\in \{0,1\}.$  In particular,
\[
    \operatorname{sep}(B_{X})\leq \operatorname{sep}(B_{X^{*}}).
\]
\end{lemma}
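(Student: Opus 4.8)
The plan is to establish the dichotomy first and then deduce the ``in particular'' clause from it together with the classical fact that separability of a dual space descends to the space itself.

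First I would record the trivial bound $\operatorname{sep}(B_{X})\leq 1$, obtained by taking $C=\{0\}$ and $\varepsilon=1$ in the definition, since $B_{X}\subseteq\{0\}+B_{X}$. It then suffices to show that $\operatorname{sep}(B_{X})<1$ forces $\operatorname{sep}(B_{X})=0$, that is, that $X$ is separable. So suppose $\operatorname{sep}(B_{X})<1$; then there exist $\varepsilon\in(0,1)$ and a countable set $C\subseteq X$ with $B_{X}\subseteq C+\varepsilon B_{X}$.

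The key step is a successive-approximation argument. Given $x\in B_{X}$, write $x=c_{0}+\varepsilon y_{0}$ with $c_{0}\in C$ and $y_{0}\in B_{X}$; apply the same containment to $y_{0}$ and iterate, obtaining $x=\sum_{k=0}^{n}\varepsilon^{k}c_{k}+\varepsilon^{n+1}y_{n}$ with $c_{k}\in C$ and $y_{n}\in B_{X}$. Since $\varepsilon<1$ and $\|y_{n}\|\leq 1$, the remainder satisfies $\|\varepsilon^{n+1}y_{n}\|\leq\varepsilon^{n+1}\to 0$, so the partial sums converge and $x=\sum_{k=0}^{\infty}\varepsilon^{k}c_{k}$. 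Thus every $x\in B_{X}$ is the norm-limit of the finite sums $\sum_{k=0}^{n}\varepsilon^{k}c_{k}$, and the collection of all such finite sums is a countable union over $n$ of images of the countable sets $C^{n+1}$, hence countable. Consequently $B_{X}$ lies in the closure of a countable set, so $X$ is separable and $\operatorname{sep}(B_{X})=0$. This proves $\operatorname{sep}(B_{X})\in\{0,1\}$.

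Finally, for the ``in particular'' clause, I would apply the dichotomy both to $X$ and to $X^{*}$, so that each of $\operatorname{sep}(B_{X})$ and $\operatorname{sep}(B_{X^{*}})$ equals $0$ or $1$. If $\operatorname{sep}(B_{X^{*}})=1$, the inequality is immediate from $\operatorname{sep}(B_{X})\leq 1$. If $\operatorname{sep}(B_{X^{*}})=0$, then $X^{*}$ is separable, and the standard fact that a Banach space with separable dual is itself separable yields $\operatorname{sep}(B_{X})=0$, so the asserted inequality again holds. The only point requiring care is the iteration, where one must ensure that the countable ``digit set'' $C$ is fixed independently of $x$; this is exactly what the single containment $B_{X}\subseteq C+\varepsilon B_{X}$ guarantees, so no genuine obstacle arises.
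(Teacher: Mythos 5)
Your proof is correct and follows essentially the same route as the paper: both iterate the containment $B_{X}\subseteq C+\varepsilon B_{X}$ geometrically, the paper concluding directly that $\operatorname{sep}(B_{X})\leq\varepsilon^{n}$ for every $n$, while you phrase the same iteration pointwise to exhibit a countable dense subset. Your handling of the ``in particular'' clause (dichotomy plus the classical fact that a separable dual forces a separable predual) is exactly what the paper leaves implicit.
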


\begin{proof}
Suppose that $\operatorname{sep}(B_{X})<1$. Then there exist $\varepsilon<1$ and a countable subset $C_{1}$ of $X$ so that $B_{X}\subseteq C_{1}+\varepsilon B_{X}$.
This implies that
\[
    B_{X}\subseteq C_{1}+\varepsilon(C_{1}+\varepsilon B_{X})=C_{2}+\varepsilon^{2}B_{X},
\]
where $C_{2}=C_{1}+\varepsilon C_{1}$ is countable.

Inductively, we get a sequence of countable sets $(C_{n})_{n}$ so that $B_{X}\subseteq C_{n}+\varepsilon^{n}B_{X}$ for all $n$. Hence $\operatorname{sep}(B_{X})\leq \varepsilon^{n}$ for all $n$ and so $\operatorname{sep}(B_{X})=0$.\end{proof}

\subsection{Measures of isomorphisms between Banach spaces}

The first-named author \cite{Chen} introduced a quantity measuring how well a Banach space can be embedded into another Banach space. More precisely, let $X,Y$ be Banach spaces. If $X$ and $Y$ are isomorphic, we set
\[
    \alpha_{Y}(X)=\sup\{\|T^{-1}\|^{-1}\colon T\colon Y\rightarrow X\text{ is an isomorphism with} \|T\|\leq 1\}.
\]

If there is no isomorphism from $Y$ into $X$, we set $\alpha_{Y}(X)=0$. We have $\alpha_{Y}(X)=1$ if and only if $X$ contains almost isometric copies of $Y$. We also need a quantity \cite{Chen} measuring how well a Banach space is from being isomorphic to a complemented subspace of another Banach space.

For a pair of Banach spaces $X,Y$ we set
\[ \beta_{Y}(X)=\sup\{(\|A\|\|B\|)^{-1}\colon A\colon X\rightarrow Y, B\colon Y\rightarrow X\text{ are operators such that }AB=I_{Y}\}. \]
If there are no such operators $A,B$, we set $\beta_{Y}(X)=0$. Clearly, $\beta_{Y}(X)=1$ if and only if for every $\varepsilon>0$, there exists a subspace $M$ of $X$ so that $M$ is $(1+\varepsilon)$-isomorphic to $Y$ and $M$ is $(1+\varepsilon)$-complemented in $X$. Let us collect some elementary observations that we shall later employ.

\begin{lemma}\label{2.1.2}
Let $X,Y$ be Banach spaces. Then
\begin{romanenumerate}
\item\label{i1:2.1.2} $\beta_{Y}(X)\leq \beta_{Y^{*}}(X^{*})\leq\alpha_{Y^{*}}(X^{*})$.
\item\label{i2:2.1.2} $\alpha_{c_{0}}(X)=\beta_{c_{0}}(X)$ if $X$ is separable.
\item\label{i3:2.1.2} $\alpha_{\ell_{1}}(X)\leq \operatorname{wck}_{X}(B_{X})$.
\end{romanenumerate}
\end{lemma}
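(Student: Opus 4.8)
The plan is to prove the three items of Lemma~\ref{2.1.2} in order, as items \romanref{i1:2.1.2} and \romanref{i2:2.1.2} are essentially formal manipulations of the definitions of $\alpha_Y(X)$ and $\beta_Y(X)$, while item \romanref{i3:2.1.2} is where the genuine content lies and where I expect the main obstacle.

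For \romanref{i1:2.1.2}, I would first establish the inequality $\beta_Y(X)\leq\beta_{Y^*}(X^*)$ by a duality argument. Given operators $A\colon X\to Y$ and $B\colon Y\to X$ with $AB=I_Y$, I would pass to adjoints: $B^*\colon X^*\to Y^*$ and $A^*\colon Y^*\to X^*$ satisfy $B^*A^*=(AB)^*=I_Y^*=I_{Y^*}$. Since $\|A^*\|=\|A\|$ and $\|B^*\|=\|B\|$, the pair $(B^*,A^*)$ witnesses $\beta_{Y^*}(X^*)\geq(\|A\|\|B\|)^{-1}$, and taking the supremum over all such $(A,B)$ gives the first inequality. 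For the second inequality $\beta_{Y^*}(X^*)\leq\alpha_{Y^*}(X^*)$, I would argue that whenever operators $A'\colon X^*\to Y^*$ and $B'\colon Y^*\to X^*$ satisfy $A'B'=I_{Y^*}$, the map $B'$ is an isomorphism onto its range; rescaling so that $\|B'\|\leq 1$ and observing that $\|(B')^{-1}\|\leq\|A'\|$ on the range, one sees that $B'$ (suitably normalised) is an admissible isomorphism in the definition of $\alpha_{Y^*}(X^*)$ delivering the bound $(\|A'\|\|B'\|)^{-1}\leq\|(B'/\|B'\|)^{-1}\|^{-1}$, whence the supremum inequality follows.

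For \romanref{i2:2.1.2}, the inequality $\alpha_{c_0}(X)\geq\beta_{c_0}(X)$ always holds since any witnessing pair for $\beta$ yields an isomorphic embedding of $c_0$. The reverse inequality $\alpha_{c_0}(X)\leq\beta_{c_0}(X)$ when $X$ is separable is the substantive direction: here I would invoke the classical Sobczyk theorem, which guarantees that any isomorphic copy of $c_0$ inside a separable Banach space is complemented, with a complementation constant controlled by the embedding constant. Concretely, given an isomorphism $T\colon c_0\to X$ with $\|T\|\leq 1$, Sobczyk's theorem produces a projection onto $T(c_0)$ and hence operators exhibiting $c_0$ as a complemented subspace; tracking the constants shows the $\beta$-value is at least the $\alpha$-value. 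The only delicate point is verifying that Sobczyk's construction does not degrade the quantitative constant, which requires care with the norms of the extension operator.

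For \romanref{i3:2.1.2}, the claim $\alpha_{\ell_1}(X)\leq\operatorname{wck}_X(B_X)$ is the heart of the lemma. The idea is that an almost-isometric copy of $\ell_1$ forces a large measure of weak non-compactness, because the unit vector basis of $\ell_1$ has no weakly convergent subsequences and its weak$^*$-cluster points in $X^{**}$ stay far from $X$. Suppose $T\colon\ell_1\to X$ is an isomorphism with $\|T\|\leq 1$ and set $\gamma=\|T^{-1}\|^{-1}$, so $\gamma\|a\|_1\leq\|Ta\|\leq\|a\|_1$. Taking $x_n=Te_n\in B_X$, I would estimate the distance from the weak$^*$-cluster points of $(x_n)$ in $X^{**}$ to $X$ from below by $\gamma$: the key computation is that for any $x\in X$, a suitable difference of the form involving $x_n$ and the cluster point detects the $\ell_1$-separation, yielding $\operatorname{d}(\operatorname{clust}_{X^{**}}((x_n)),X)\geq\gamma$. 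The main obstacle will be organising this lower bound cleanly, since one must produce a functional in $B_{X^{*}}$ (or work directly with the $\ell_1$-structure) that simultaneously witnesses the separation of the cluster set from $X$ while respecting the constant $\gamma$; I expect the cleanest route is to use the fact that differences $e_{2k}-e_{2k-1}$ have $\ell_1$-norm $2$ and are weakly null in no subsequence, so their images have cluster points at distance at least $\gamma$ from the origin of $X$, and then to take the supremum over sequences in $B_X$ to conclude $\gamma\leq\operatorname{wck}_X(B_X)$; passing to the supremum over admissible $T$ completes the proof.
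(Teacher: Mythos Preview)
Your argument for \romanref{i1:2.1.2} is correct and is exactly what the paper means by ``straightforward''. The difficulties lie in \romanref{i2:2.1.2} and \romanref{i3:2.1.2}, where the paper does not give self-contained arguments but cites \cite[Theorem~6]{DRT} and \cite[Lemma~5]{KPS}, respectively; in both cases your sketch has a genuine gap.

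For \romanref{i2:2.1.2}, the classical Sobczyk theorem only yields a projection of norm at most $2$ onto an \emph{isometric} copy of $c_0$ (and $2\lambda$ onto a $\lambda$-isomorphic copy). Feeding this into your scheme, an embedding $T\colon c_0\to X$ with $\|T\|\leq 1$ and $\|T^{-1}\|^{-1}=\gamma$ produces $A=T^{-1}P$, $B=T$ with $\|A\|\|B\|\leq 2/\gamma$ at best, so you obtain only $\beta_{c_0}(X)\geq \tfrac{1}{2}\alpha_{c_0}(X)$, not equality. What is actually needed is the refinement proved in \cite{DRT}: in a separable space, \emph{almost isometric} copies of $c_0$ are \emph{almost $1$-complemented}. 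Equivalently, one first invokes James' distortion theorem so that $\alpha_{c_0}(X)\in\{0,1\}$, and then appeals to \cite{DRT} to see that when $\alpha_{c_0}(X)=1$ one also has $\beta_{c_0}(X)=1$. Your caveat that ``Sobczyk's construction must not degrade the constant'' is precisely the point, but the classical theorem \emph{does} degrade it by a factor of $2$; closing this gap is the non-trivial content of the cited reference.

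For \romanref{i3:2.1.2}, your proposed route via $e_{2k}-e_{2k-1}$ measures the wrong quantity: you argue that weak$^*$-cluster points of the images lie at distance at least $\gamma$ \emph{from the origin}, but $\operatorname{wck}_X(B_X)$ requires distance from cluster points to \emph{all of $X$}. Controlling $\operatorname{d}(x^{**},X)$ from below is the real issue, and producing a single norming functional from the $\ell_1$-structure does not suffice, since for a given $x\in X$ one cannot force the Hahn--Banach extension to take a prescribed sign on $x$. The paper defers to \cite[Lemma~5]{KPS} for this estimate. Alternatively, note that the inequality is almost immediate from facts already recorded in the preliminaries: if $\alpha_{\ell_1}(X)>0$ then $X$ contains $\ell_1$, hence $X$ is non-reflexive and $\operatorname{wck}_X(B_X)=1\geq \alpha_{\ell_1}(X)$; otherwise both sides vanish.
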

\begin{proof}\eqref{i1:2.1.2} is straightforward, whereas \eqref{i2:2.1.2} follows from \cite[Theorem 6]{DRT}. \eqref{i3:2.1.2} follows from \cite[Lemma 5]{KPS}.\end{proof}

\section{Quantification of shrinking bases}\label{sec:shrinking}
\begin{definition}
Let $X$ be a Banach space with a basis $(x_{n})_{n=1}^\infty$. We set
\[
    \textrm{sh}_{X}((x_{n})_{n=1}^\infty)=\sup_{x^{*}\in B_{X^{*}}}\limsup_{n}\|x^{*}\|_{n}.
\]
\end{definition}
According to the definition above, $(x_{n})_{n=1}^\infty$ is shrinking if and only if $\textrm{sh}((x_{n})_{n=1}^\infty)=0$.

If $(x_{n})_{n}$ is a basic sequence, we are able to define $\textrm{sh}_{[x_{n}:n\in \mathbb{N}]}((x_{n})_{n})$ naturally. For the sake of simplicity, we'll omit the subscripts $X$ in $\textrm{sh}_{X}((x_{n})_{n})$ for a basis $(x_{n})_{n}$ and $[x_{n}:n\in \mathbb{N}]$ in $\textrm{sh}_{[x_{n}:n\in \mathbb{N}]}((x_{n})_{n})$ for a basic sequence $(x_{n})_{n}$.

\begin{example}${}$
\begin{enumerate}
\item\label{1:ex1} $\textrm{sh}((e^{*}_{n})_{n=1}^\infty)=1$, where $(e^{*}_{n})_{n=1}^\infty$ is the unit vector basis of $\ell_{1}$.
\item\label{1:ex2} $\textrm{sh}((s_{n})_{n=1}^\infty)=1$, where $(s_{n})_{n=1}^\infty$ is the summing basis of $c_{0}$.
\item\label{1:ex3} $\textrm{sh}((\sum_{i=1}^{n}e_{i})_{n=1}^\infty)=1$, where $(e_{n})_{n=1}^\infty$ is the unit vector basis of the James space $\mathcal{J}$.
\end{enumerate}
\end{example}
\begin{proof}
We only prove \eqref{1:ex3}. Let $(f_{n})_{n=1}^\infty$ be the biorthogonal functionals associated to $(e_{n})_{n=1}^\infty$. Since $(e_{n})_{n=1}^\infty$ is monotone, we get $\|f_{1}\|=1$.
Clearly, $\|\sum_{i=1}^{n}e_{i}\|_{\mathcal{J}}=1$ for all $n$. Thus $\|f_{1}|_{[\sum_{i=1}^{k}e_{i}\colon k>n]}\|=1$ for all $n$.
\end{proof}

\begin{theorem}\label{2.1}
Let $X$ be a Banach space with a basis $(x_{n})_{n=1}^\infty$. Then
\[\operatorname{sh}((x_{n})_{n=1}^\infty)\leq \widehat{\operatorname{d}}(B_{X^{*}},V)\leq (K+1)\operatorname{sh}((x_{n})_{n=1}^\infty).\]
\end{theorem}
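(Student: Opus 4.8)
The plan is to fix a functional $x^*\in B_{X^*}$, compare the two pointwise quantities $\limsup_n\|x^*\|_n$ and $\operatorname{d}(x^*,V)$ directly, and only take the supremum over $B_{X^*}$ at the very end. I would work throughout from the defining description $\|x^*\|_n=\|x^*|_{[x_i\colon i>n]}\|$ together with the mapping properties of the canonical projections $P_n$, so that the argument is self-contained and does not even appeal to the identity \eqref{1}.

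For the left-hand inequality $\operatorname{sh}((x_n)_{n=1}^\infty)\le\widehat{\operatorname{d}}(B_{X^*},V)$, I would first test against the dense subspace $\bigcup_m[x^*_i\colon i\le m]$ of $V$. If $v\in[x^*_i\colon i\le m]$, then $v$ annihilates the tail $[x_i\colon i>n]$ for every $n\ge m$, since $\langle x^*_j,x_i\rangle=\delta_{ji}$ forces $\langle v,x_i\rangle=0$ once $i>m$. Consequently, for any $y$ in $[x_i\colon i>n]$ with $\|y\|\le 1$ we have $|\langle x^*,y\rangle|=|\langle x^*-v,y\rangle|\le\|x^*-v\|$, whence $\|x^*\|_n\le\|x^*-v\|$ for all $n\ge m$ and therefore $\limsup_n\|x^*\|_n\le\|x^*-v\|$. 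Letting $v$ range over the dense subspace $\bigcup_m[x^*_i\colon i\le m]$ of $V$ gives $\limsup_n\|x^*\|_n\le\operatorname{d}(x^*,V)$, and taking the supremum over $x^*\in B_{X^*}$ yields the claim.

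For the right-hand inequality, I would use the partial-sum projections to produce an explicit near-point of $x^*$ inside $V$. Since $P_n^*x^*=\sum_{i\le n}\langle x^*,x_i\rangle x^*_i\in V$, we have $\operatorname{d}(x^*,V)\le\|x^*-P_n^*x^*\|$, and the crux is the estimate $\|x^*-P_n^*x^*\|\le(K+1)\|x^*\|_n$. To obtain it, for $x\in B_X$ I would compute $\langle x^*-P_n^*x^*,x\rangle=\langle x^*,(I-P_n)x\rangle$, and since $(I-P_n)x$ lies in the tail $[x_i\colon i>n]$ with $\|(I-P_n)x\|\le\|I-P_n\|\le K+1$, this is bounded by $(K+1)\|x^*\|_n$. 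Hence $\operatorname{d}(x^*,V)\le(K+1)\|x^*\|_n$ for every $n$, so $\operatorname{d}(x^*,V)\le(K+1)\limsup_n\|x^*\|_n$, and the supremum over $B_{X^*}$ finishes the proof.

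The only genuinely delicate point, and the place where the constant $K+1$ enters, is the estimate $\|x^*-P_n^*x^*\|\le(K+1)\|x^*\|_n$: it rests on rewriting $x^*-P_n^*x^*$ through $I-P_n$, on the fact that $(I-P_n)x$ always lands in the tail so that $x^*$ may be replaced there by its restriction, and on the uniform bound $\|I-P_n\|\le\|I\|+\|P_n\|\le K+1$. Everything else is bookkeeping with suprema and limsups; it is worth recording that $n\mapsto\|x^*\|_n$ is nonincreasing, so that the $\limsup$ is in fact a limit, although this observation is not strictly needed for either inequality.
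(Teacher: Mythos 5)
Your proof is correct and follows essentially the same route as the paper: the left inequality comes from the fact that finite sections of the coordinate functionals annihilate the tails $[x_i\colon i>n]$ (you prove directly the half of the identity $\|x^*\|_n=\operatorname{d}(x^*,[x^*_i\colon i\le n])$ that is needed, where the paper invokes it as a citation), and the right inequality is exactly the paper's computation $\langle x^*-P_n^*x^*,x\rangle=\langle x^*,(I-P_n)x\rangle$ with $\|I-P_n\|\le K+1$. The pointwise organisation and the remark that $n\mapsto\|x^*\|_n$ is nonincreasing are cosmetic differences only.
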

\begin{proof}
Let $0<c<\operatorname{sh}((x_{n})_{n=1}^\infty)$. By (\ref{1}), there exist $x^{*}_{0}\in B_{X^{*}}$ and a strictly increasing sequence $(k_{n})_{n}$ of positive integers so that $\textrm{d}(x^{*}_{0},[x^{*}_{i}\colon i\leq k_{n}])>c$ for all $n$. We \emph{claim} that $\textrm{d}(x^{*}_{0},V)\geq c$. Indeed, given $x^{*}\in V$ and $\varepsilon>0$. We choose $y^{*}\in [x^{*}_{i}\colon i\leq N]$ for some $N$ so that $\|x^{*}-y^{*}\|<\varepsilon$. Hence we get
\[
    \|x^{*}_{0}-x^{*}\|\geq \|x^{*}_{0}-y^{*}\|-\|y^{*}-x^{*}\|>c-\varepsilon.
\]
Letting $\varepsilon\rightarrow 0$, we get the claim. Since $c$ is arbitrary, we arrive at the first inequality. It remains to verify the second inequality.

For this, fix an arbitrary $0<c<\widehat{\operatorname{d}}(B_{X^{*}},V)$. We choose $x^{*}_{0}\in B_{X^{*}}$ with $\textrm{d}(x^{*}_{0},V)>c$. For each $n$, we get
\begin{align*}
c&\leq \|x^{*}_{0}-\sum_{i=1}^{n}\langle x^{*}_{0},x_{i}\rangle x^{*}_{i}\|\\
&=\sup_{x\in B_{X}}|\langle x^{*}_{0},x\rangle-\sum_{i=1}^{n}\langle x^{*}_{0},x_{i}\rangle \langle x^{*}_{i},x\rangle|\\
&=\sup_{x\in B_{X}}|\langle x^{*}_{0},x-\sum_{i=1}^{n}\langle x^{*}_{i},x\rangle x_{i}\rangle|\\
&=\sup_{x\in B_{X}}|\langle x^{*}_{0},\sum_{i=n+1}^{\infty}\langle x^{*}_{i},x\rangle x_{i}\rangle|\\
&\leq (K+1)\|x^{*}_{0}|_{[x_{i}\colon i>n]}\|.\\
\end{align*}
This implies that $c\leq (K+1)\limsup_{n}\|x^{*}_{0}\|_{n}$.
As $c$ is arbitrary, the proof is complete.\end{proof}

\begin{theorem}\label{2.2}
Let $X$ be a Banach space with a basis $(x_{n})_{n=1}^\infty$.
Then
\[\alpha_{\ell_{1}}(X)\leq \operatorname{sep}(B_{X^*})\leq \widehat{\operatorname{d}}(B_{X^{*}},V).\]
\end{theorem}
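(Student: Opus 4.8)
The plan is to treat the two inequalities separately, and to lean on the fact that, by Lemma~\ref{2.1.1} applied to the Banach space $X^{*}$, the quantity $\operatorname{sep}(B_{X^{*}})$ takes only the values $0$ and $1$. This dichotomy is what makes the left-hand inequality tractable despite $\alpha_{\ell_1}$ being a genuinely $[0,1]$-valued quantity.

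For the right-hand inequality $\operatorname{sep}(B_{X^{*}})\leq\widehat{\operatorname{d}}(B_{X^{*}},V)$, I would use that $V=[x^{*}_{n}\colon n\in\mathbb N]$ is separable, being the closed span of a countable set; fix a countable dense subset $C$ of $V$. Given $\varepsilon>\widehat{\operatorname{d}}(B_{X^{*}},V)$ and any $x^{*}\in B_{X^{*}}$, one has $\operatorname{d}(x^{*},V)\leq\widehat{\operatorname{d}}(B_{X^{*}},V)<\varepsilon$, so there is $v\in V$ with $\|x^{*}-v\|$ strictly below $\varepsilon$, and then a $c\in C$ with $\|v-c\|$ as small as we like. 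This shows $B_{X^{*}}\subseteq C+\varepsilon B_{X^{*}}$ for every such $\varepsilon$, whence $\operatorname{sep}(B_{X^{*}})\leq\widehat{\operatorname{d}}(B_{X^{*}},V)$.

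The substance is the left-hand inequality $\alpha_{\ell_{1}}(X)\leq\operatorname{sep}(B_{X^{*}})$. Since $\|T\|\,\|T^{-1}\|\geq 1$ forces $\|T^{-1}\|^{-1}\leq 1$ for every isomorphism $T$ with $\|T\|\leq 1$, we always have $\alpha_{\ell_{1}}(X)\leq 1$; hence the inequality is automatic when $\operatorname{sep}(B_{X^{*}})=1$. It therefore suffices to show that $\operatorname{sep}(B_{X^{*}})=0$, i.e.\ separability of $X^{*}$, forces $\alpha_{\ell_{1}}(X)=0$, i.e.\ that $X$ contains no isomorphic copy of $\ell_{1}$. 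This is the classical fact that $\ell_{1}\hookrightarrow X$ renders $X^{*}$ non-separable: an isomorphic embedding $T\colon\ell_{1}\to X$ has surjective adjoint $T^{*}\colon X^{*}\to\ell_{\infty}$, so $\ell_{\infty}$ would be a quotient of the separable space $X^{*}$, which is impossible.

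Alternatively, and more in the quantitative spirit, I would argue directly: for $T\colon\ell_{1}\to X$ with $\|T\|\leq 1$ and $c:=\|T^{-1}\|^{-1}$, a Hahn--Banach extension argument shows $c\,B_{\ell_{\infty}}\subseteq T^{*}(B_{X^{*}})$ while $\|T^{*}\|\leq 1$. Feeding a covering $B_{X^{*}}\subseteq C+\varepsilon B_{X^{*}}$ through $T^{*}$ then yields $B_{\ell_{\infty}}\subseteq \frac{1}{c}T^{*}(C)+\frac{\varepsilon}{c}B_{\ell_{\infty}}$, so $\operatorname{sep}(B_{\ell_{\infty}})\leq\varepsilon/c$; since $\operatorname{sep}(B_{\ell_{\infty}})=1$ we get $c\leq\varepsilon$, and letting $\varepsilon\downarrow\operatorname{sep}(B_{X^{*}})$ and taking the supremum over $T$ gives the claim. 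The only real obstacle is locating the non-separability of $\ell_{\infty}$ (equivalently $\operatorname{sep}(B_{\ell_{\infty}})=1$, which is again Lemma~\ref{2.1.1}) together with the surjectivity of $T^{*}$ onto $\ell_{\infty}$; everything else is bookkeeping.
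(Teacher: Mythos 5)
Your proposal is correct. The right-hand inequality is handled exactly as in the paper: in both cases one covers $B_{X^{*}}$ by a countable set built from a dense countable subset of the separable space $V$ and concludes $\operatorname{sep}(B_{X^{*}})\leq \widehat{\operatorname{d}}(B_{X^{*}},V)$. For the left-hand inequality, your second (``quantitative'') argument is essentially the paper's proof: given $T\colon \ell_{1}\rightarrow X$ with $\|T\|\leq 1$ and $c=\|T^{-1}\|^{-1}$, the Hahn--Banach extension of $\varphi\circ T^{-1}$ gives $cB_{\ell_{\infty}}\subseteq T^{*}(B_{X^{*}})$, and pushing a countable covering of $B_{X^{*}}$ through $T^{*}$ yields $c\leq \operatorname{sep}(B_{X^{*}})$; the only cosmetic difference is that the paper certifies $\operatorname{sep}(B_{\ell_{\infty}})\geq 1$ via the uncountable $2$-separated set of sign sequences $A=\{(\theta_{n})_{n}\colon |\theta_{n}|=1\}$ rather than via Lemma~\ref{2.1.1}. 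Your first argument --- invoking the $\{0,1\}$-dichotomy of Lemma~\ref{2.1.1} for $\operatorname{sep}(B_{X^{*}})$, the trivial bound $\alpha_{\ell_{1}}(X)\leq 1$, and the classical fact that an embedding of $\ell_{1}$ forces $X^{*}$ to be non-separable (since $T^{*}$ maps $X^{*}$ onto $\ell_{\infty}$) --- is a genuinely different and softer route that the paper does not take. It is shorter and correctly exploits the fact that $\operatorname{sep}$ of a dual ball cannot take intermediate values, but it leans on a qualitative theorem; the direct covering argument has the advantage of being self-contained and of surviving in situations where the quantity on the right does not obey a $0$--$1$ law. Both routes are valid.
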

\begin{proof}
Let $0<c<\alpha_{\ell_{1}}(X)$. Then there exists an operator $T\colon \ell_{1}\rightarrow X$ such that
\[
    c\|z\|\leq \|Tz\|\leq \|z\|\quad (z\in \ell_1).
\]
This implies that $T^{*}B_{X^{*}}\supseteq cB_{\ell_{\infty}}\supseteq cA$, where $A=\{(\theta_{n})_{n=1}^\infty\colon |\theta_{n}|= 1\; (n\in \mathbb{N})\}$. It is easy to see that $\textrm{sep}(A)=1$. Hence we get \[c\leq \textrm{sep}(T^{*}B_{X^{*}})\leq \textrm{sep}(B_{X^{*}}).\]
By the arbitrariness of $c$, we get the first inequality.

Let $c>\widehat{\operatorname{d}}(B_{X^{*}},V)$ and $\varepsilon>0$. Moreover, let $\mathcal Q$ be a countable, dense subset of $\mathbb R$ and let $C=\{\sum_{i=1}^{n}r_{i}x^{*}_{i}\colon n\in \mathbb{N},r_{1},r_{2},\ldots,r_{n}\in \mathcal{Q}\}$. For $x^{*}\in B_{X^{*}}$ we choose $y^{*}\in V$ with $\|x^{*}-y^{*}\|<c$ and $z^{*}\in C$ with $\|y^{*}-z^{*}\|<\varepsilon.$ Hence $\|x^{*}-z^{*}\|<c+\varepsilon$. This means that $\textrm{sep}(B_{X^{*}})\leq c+\varepsilon$. As $c$ and $\varepsilon$ were arbitrary, the proof is complete.\end{proof}

\begin{theorem}\label{2.3}
Let $X$ be a Banach space with an unconditional basis $(x_{n})_{n=1}^\infty$. Then
\[
    \operatorname{sh}((x_{n})_{n=1}^\infty)\leq K_{u}\alpha_{\ell_{1}}(X).
\]
\end{theorem}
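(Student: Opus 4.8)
The plan is to prove the equivalent statement $\tfrac{1}{K_u}\operatorname{sh}((x_n)_{n=1}^\infty)\le \alpha_{\ell_1}(X)$ by exhibiting, for every $c$ below $\operatorname{sh}((x_n)_{n=1}^\infty)$, an isomorphic copy of $\ell_1$ inside $X$ with embedding constant about $c/K_u$. First I would fix scalars $0<c<c'<\operatorname{sh}((x_n)_{n=1}^\infty)$ and select $x^*\in B_{X^*}$ with $\limsup_n\|x^*\|_n>c'$; by definition this means $\|x^*\|_n=\|x^*|_{[x_i\colon i>n]}\|>c'$ for infinitely many $n$, which is exactly what I will feed into an inductive block construction.

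With this $x^*$ in hand, I would build inductively a block sequence $(u_k)_{k=1}^\infty$ of $(x_n)_{n=1}^\infty$ with pairwise disjoint, strictly increasing supports such that $\|u_k\|\le 1$ and $\langle x^*,u_k\rangle>c$ for all $k$. At each stage, having fixed the finite supports of $u_1,\dots,u_{k-1}$ with largest index $m$, I use that $\|x^*\|_n>c'$ for some $n>m$ to produce a vector $v\in[x_i\colon i>n]$ with $\|v\|\le 1$ and $\langle x^*,v\rangle>c'$, and then replace $v$ by a sufficiently close finite block $u_k$ supported on $\{i>n\}$; rescaling if necessary keeps $\|u_k\|\le 1$, and since $c<c'$ the inequality $\langle x^*,u_k\rangle>c$ survives the approximation.

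Finally I would define $T\colon\ell_1\to X$ by $Te_k=u_k$. The upper estimate $\|Tz\|\le\|z\|_1$ is immediate from the triangle inequality since $\|u_k\|\le1$. For the lower estimate, note first that for nonnegative coefficients $(a_k)$ one has $\|\sum_k a_k u_k\|\ge\langle x^*,\sum_k a_k u_k\rangle=\sum_k a_k\langle x^*,u_k\rangle\ge c\sum_k a_k$, because $\|x^*\|\le1$. The crux—and the step I expect to require the most care—is passing to arbitrary signs via unconditionality: given finitely supported $(a_k)$, I choose unit scalars $\eta=(\eta_i)$ equal to $\operatorname{sign}(a_k)$ on $\operatorname{supp}(u_k)$ and to $1$ elsewhere, so that the multiplier satisfies $M_\eta(\sum_k a_k u_k)=\sum_k|a_k|u_k$; since $\|M_\eta\|\le K_u$ this yields $c\sum_k|a_k|\le\|\sum_k|a_k|u_k\|\le K_u\|\sum_k a_k u_k\|$. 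Hence $T$ is bounded below with constant $c/K_u$, so $\alpha_{\ell_1}(X)\ge\|T^{-1}\|^{-1}\ge c/K_u$; letting $c\uparrow\operatorname{sh}((x_n)_{n=1}^\infty)$ completes the argument. The only delicate points are the support-disjointness that makes the definition of $\eta$ legitimate and the simultaneous control of $\|u_k\|$ and $\langle x^*,u_k\rangle$ in the approximation step.
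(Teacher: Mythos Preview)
Your proposal is correct and follows essentially the same approach as the paper's proof: pick $x^{*}\in B_{X^{*}}$ witnessing $\operatorname{sh}((x_{n})_{n})>c$, extract a block sequence $(u_{k})$ with $\|u_{k}\|\le 1$ and $\langle x^{*},u_{k}\rangle>c$, and then use the unconditionality inequality $K_{u}\|\sum a_{k}u_{k}\|\ge\|\sum|a_{k}|u_{k}\|\ge c\sum|a_{k}|$ to embed $\ell_{1}$ with constant $c/K_{u}$. The paper compresses the block construction and the multiplier argument into single sentences, whereas you spell out the auxiliary $c'$ for the approximation and the explicit choice of signs $\eta$, but the underlying argument is identical.
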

\begin{proof}
Let $0<c<\operatorname{sh}((x_{n})_{n=1}^\infty)$. Then there exist $x^{*}_{0}\in B_{X^{*}}$ and a block basic sequence $(u_{n})_{n}$ with respect to $(x_{n})_{n=1}^\infty$ so that $\|u_{n}\|\leq 1$ and $\langle x^{*}_{0},u_{n}\rangle >c$ for all $n$. Then, for each $m$ and every choice of scalars $a_{1},a_{2},\ldots,a_{m}$, we get
\[
    K_{u}\|\sum_{n=1}^{m}a_{n}u_{n}\|\geq \|\sum_{n=1}^{m}|a_{n}|u_{n}\|\geq \sum_{n=1}^{m}|a_{n}|\langle x^{*}_{0},u_{n}\rangle\geq c\sum_{n=1}^{m}|a_{n}|.
\]
Let us define an operator $T\colon \ell_{1}\rightarrow X$ by $Te_{n}=u_{n}$ $(n\in \mathbb{N})$. It is easy to see that $\|T\|\leq 1$ and $\|T^{-1}\|\leq \frac{K_{u}}{c}$.
This implies that $\alpha_{\ell_{1}}(X)\geq \frac{c}{K_{u}}$. As $c$ was arbitrary, the proof is complete.\end{proof}

\section{Quantifications of bounded completeness}\label{sect:boundedly}
Let $(x_{n})_{n=1}^\infty$ be a bounded sequence in a Banach space $X$. We set
\[
    \textrm{ca}((x_{n})_{n=1}^\infty)=\inf_{n}\sup_{k,l\geq n}\|x_{k}-x_{l}\|.
\]
Then $(x_{n})_{n=1}^\infty$ is norm-Cauchy if and only if $\textrm{ca}((x_{n})_{n=1}^\infty)=0$.

\begin{definition}
Let $(x_{n})_{n=1}^\infty$ be a basis for a Banach space $X$. We set
\[
    \textrm{bc}_{1}((x_{n})_{n=1}^\infty)=\sup\Big\{\textrm{ca}((\sum_{i=1}^{n}a_{i}x_{i})_{n=1}^\infty)\colon
(\sum_{i=1}^{n}a_{i}x_{i})_{n=1}^\infty\subseteq B_{X}\Big\}.
\]
\end{definition}
Clearly, $(x_{n})_{n=1}^\infty$ is boundedly complete if and only if $\textrm{bc}_{1}((x_{n})_{n=1}^\infty)=0$.

\begin{example}${}$
\begin{enumerate}
\item $\operatorname{bc}_{1}((e_{n})_{n})=1$, where $(e_{n})_{n}$ is the unit vector basis of $c_{0}$.
\item $\operatorname{bc}_{1}((s_{n})_{n})=1$, where $(s_{n})_{n}$ is the summing basis of $c_{0}$.
\item $\operatorname{bc}_{1}((e_{n})_{n=0}^{\infty})=2$, where $(e_{n})_{n=0}^{\infty}$ is the unit vector basis of $c$ ($e_{0}=(1,1,1,\ldots)$).
\item $\operatorname{bc}_{1}((e_{n})_{n})=1$, where $(e_{n})_{n}$ is the unit vector basis of the James space $\mathcal{J}$.
\end{enumerate}
\end{example}
\begin{proof}
Example (1) is straightforward. For (2), note that $\|\sum_{i=1}^{n}a_{i}s_{i}\|=\max_{1\leq k\leq n}|\sum_{i=k}^{n}a_{i}|$ for all $n$ and all scalars $a_{1},a_{2},\ldots,a_{n}$. Given $(\sum_{i=1}^{n}a_{i}s_{i})_{n}\subseteq B_{c_{0}}$. Then $|\sum_{i=k}^{n}a_{i}|\leq 1$ for all $n$ and all $k\leq n$. This implies that
\[
    \|\sum_{i=n+1}^{n+m}a_{i}s_{i}\|=\max_{n+1\leq k\leq n+m}|\sum_{i=k}^{n+m}a_{i}|\leq 1\quad (m,n\in\mathbb N).
\]
Hence $\textrm{ca}((\sum_{i=1}^{n}a_{i}s_{i})_{n})\leq 1$ and so $\operatorname{bc}_{1}((s_{n})_{n})\leq 1$.

On the other hand, observe that
\[
    \|\sum_{i=1}^{n}(-1)^{i}s_{i}\|=\max_{1\leq k\leq n}|\sum_{i=k}^{n}(-1)^{i}|\leq 1\quad (n\in\mathbb N).
\]
However, for each $n$, we have
\[
    \|\sum_{i=1}^{n+2n-1}(-1)^{i}s_{i}-\sum_{i=1}^{n}(-1)^{i}s_{i}\|=\max_{n+1\leq k\leq n+2n-1}|\sum_{i=k}^{n+2n-1}(-1)^{i}|
\geq |\sum_{i=n+1}^{n+2n-1}(-1)^{i}|=1.
\]
This implies that $\textrm{ca}((\sum_{i=1}^{n}(-1)^{i}s_{i})_{n=1}^\infty)\geq 1$. Hence $\operatorname{bc}_{1}((s_{n})_{n})\geq 1$.

\noindent (3). Observe that $\|\sum_{i=0}^{n}a_{i}e_{i}\|=\max(|a_{0}|,|a_{0}+a_{1}|,\ldots,|a_{0}+a_{n}|)$ for all $n$ and all scalars $a_{0},a_{1},\ldots,a_{n}$. We take $a_{0}=-1,a_{n}=2$ $(n=1,2,\ldots).$ Then $\|\sum_{i=0}^{n}a_{i}e_{i}\|=1$ for all $n$. However, for all $n,k$, we get
\[
    \Big\|\sum_{i=0}^{n+k}a_{i}e_{i}-\sum_{i=0}^{n}a_{i}e_{i}\Big\|=\sup_{n+1\leq i\leq n+k}|a_{i}|=2.
\]
This means that $\textrm{ca}((\sum_{i=0}^{n}a_{i}e_{i})_{n=1}^\infty)=2$. Hence $\operatorname{bc}_{1}((e_{n})_{n=0}^{\infty})=2$.

\noindent (4). Note that $\|\sum_{i=n}^{m}e_{i}\|_{\mathcal{J}}=1$ for all $n,m$. This implies that $\operatorname{bc}_{1}((e_{n})_{n})\geq 1$. For the second inequality, assume that $\|\sum_{i=1}^{n}a_{i}e_{i}\|_{\mathcal{J}}\leq 1$ for all $n$.
Suppose $p_{1}<p_{2}<\ldots<p_{n+1}$. Since $\|\sum_{i=1}^{p_{n+1}}a_{i}e_{i}\|_{\mathcal{J}}\leq 1$, we get
$\sum_{i=1}^{n}(a_{p_{i}}-a_{p_{i+1}})^{2}+a_{p_{n+1}}^{2}+a_{p_{1}}^{2}\leq 2.$
This implies that $\|\sum_{i=n}^{m}a_{i}e_{i}\|_{\mathcal{J}}\leq 1$ for all $n,m$ and so $\operatorname{bc}_{1}((e_{n})_{n})\leq 1$.
\end{proof}

\begin{definition}
Let $X$ be a Banach space with a basis $(x_{n})_{n=1}^\infty$. We set
\begin{itemize}
    \item $\operatorname{bc}_{2}((x_{n})_{n=1}^\infty)=\sup_{\varphi\in B_{V^{*}}}\operatorname{ca}((\sum_{i=1}^{n}\langle \varphi,x^{*}_{i}\rangle x_{i})_{n=1}^\infty)$
    \item $\operatorname{bc}_{3}((x_{n})_{n=1}^\infty)=\sup_{x^{**}\in B_{X^{**}}}\operatorname{ca}((\sum_{i=1}^{n}\langle x^{**},x^{*}_{i}\rangle x_{i})_{n=1}^\infty).$
\end{itemize}
\end{definition}

\begin{theorem}\label{4.4}
Let $X$ be a Banach space with a basis $(x_{n})_{n=1}^\infty$. Then
\[\operatorname{bc}_{1}((x_{n})_{n=1}^\infty)\leq \operatorname{bc}_{2}((x_{n})_{n=1}^\infty)\leq \operatorname{bc}_{3}((x_{n})_{n=1}^\infty)\leq K\operatorname{bc}_{1}((x_{n})_{n=1}^\infty).\]
\end{theorem}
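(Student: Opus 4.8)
The three quantities $\operatorname{bc}_1,\operatorname{bc}_2,\operatorname{bc}_3$ all measure the same defect $\operatorname{ca}$ of the partial-sum sequences and differ only in where the coefficient sequence $(a_i)$ is allowed to come from: for $\operatorname{bc}_1$ one ranges over all scalar sequences whose partial sums stay in $B_X$, for $\operatorname{bc}_2$ the coefficients are $a_i=\langle\varphi,x_i^*\rangle$ with $\varphi\in B_{V^*}$, and for $\operatorname{bc}_3$ they are $a_i=\langle x^{**},x_i^*\rangle$ with $x^{**}\in B_{X^{**}}$. The plan is to handle the three inequalities separately, in each case exhibiting, for a given witness of the smaller quantity, a witness of the larger one that reproduces \emph{exactly} the same coefficient sequence (hence the same value of $\operatorname{ca}$). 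The only non-isometric step will be the last inequality, where a rescaling by the basis constant $K$ enters.

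For $\operatorname{bc}_1\leq\operatorname{bc}_2$, I would fix a scalar sequence $(a_i)$ with $s_n:=\sum_{i=1}^n a_i x_i\in B_X$ for every $n$. Since $\|j\|\leq1$ by \eqref{2}, the images $js_n$ lie in the weak$^*$-compact ball $B_{V^*}$, so $(js_n)_n$ has a weak$^*$-cluster point $\varphi\in B_{V^*}$. For each fixed $k$ one has $\langle js_n,x_k^*\rangle=\langle x_k^*,s_n\rangle=a_k$ for all $n\geq k$, and passing to the cluster point gives $\langle\varphi,x_k^*\rangle=a_k$. Thus $\varphi$ reproduces the coefficients $(a_i)$, the sequence $(\sum_{i=1}^n\langle\varphi,x_i^*\rangle x_i)_n$ coincides with $(s_n)_n$, and $\operatorname{ca}((s_n)_n)\leq\operatorname{bc}_2$; taking the supremum over all admissible $(a_i)$ yields the first inequality. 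For $\operatorname{bc}_2\leq\operatorname{bc}_3$, given $\varphi\in B_{V^*}$ I would use the Hahn--Banach theorem to extend it to $x^{**}\in X^{**}$ with $\|x^{**}\|=\|\varphi\|\leq1$. Since each $x_i^*$ lies in $V$, one has $\langle x^{**},x_i^*\rangle=\langle\varphi,x_i^*\rangle$ for all $i$, so the two coefficient sequences agree and $\operatorname{ca}$ is unchanged; the supremum over $\varphi\in B_{V^*}$ then gives $\operatorname{bc}_2\leq\operatorname{bc}_3$.

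The main point is $\operatorname{bc}_3\leq K\operatorname{bc}_1$. Fix $x^{**}\in B_{X^{**}}$ and set $a_i=\langle x^{**},x_i^*\rangle$, $t_n=\sum_{i=1}^n a_i x_i$. The key identity is that $P_n^{**}x^{**}$, as an element of $X^{**}$, equals $t_n$ viewed in $X\subseteq X^{**}$: for $x^*\in X^*$,
\[
\langle P_n^{**}x^{**},x^*\rangle=\langle x^{**},P_n^*x^*\rangle=\sum_{i=1}^n\langle x^*,x_i\rangle\langle x^{**},x_i^*\rangle=\langle x^*,t_n\rangle,
\]
using $P_n^*x^*=\sum_{i=1}^n\langle x^*,x_i\rangle x_i^*$. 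Consequently $\|t_n\|=\|P_n^{**}x^{**}\|\leq\|P_n\|\,\|x^{**}\|\leq K$ for every $n$, so the rescaled partial sums $\sum_{i=1}^n (a_i/K)x_i=t_n/K$ all lie in $B_X$. Hence $(a_i/K)$ is admissible for $\operatorname{bc}_1$, and by the positive homogeneity of $\operatorname{ca}$ one gets $\frac1K\operatorname{ca}((t_n)_n)=\operatorname{ca}((t_n/K)_n)\leq\operatorname{bc}_1$, that is $\operatorname{ca}((t_n)_n)\leq K\operatorname{bc}_1$; the supremum over $x^{**}\in B_{X^{**}}$ finishes the proof. I expect the verification of the bidual-projection identity $P_n^{**}x^{**}=t_n$ together with the norm bound $\|t_n\|\leq K$ to be the crux of the argument; once these are secured, the rescaling and the homogeneity of $\operatorname{ca}$ are routine.
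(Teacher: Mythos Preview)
Your proof is correct and structurally identical to the paper's: three separate claims, in each case producing a functional in the appropriate unit ball that reproduces the coefficient sequence. For $\operatorname{bc}_3\leq K\operatorname{bc}_1$ your argument coincides with the paper's, and for $\operatorname{bc}_2\leq\operatorname{bc}_3$ you invoke norm-preserving Hahn--Banach directly while the paper extends $\varphi$ with norm $\leq 1+\varepsilon$ and lets $\varepsilon\to 0$---a purely cosmetic difference. The one genuine variation is in $\operatorname{bc}_1\leq\operatorname{bc}_2$: the paper first checks the duality inequality $\big|\sum_{i=1}^n b_i a_i\big|\leq\big\|\sum_{i=1}^n b_i x_i^*\big\|$ and then appeals to Helly's theorem to obtain $\varphi\in B_{V^*}$ with $\langle\varphi,x_n^*\rangle=a_n$, whereas you obtain $\varphi$ as a weak$^*$-cluster point of $(js_n)_n\subseteq B_{V^*}$ via Banach--Alaoglu. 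Both are standard; your route is arguably cleaner here since it uses the map $j$ already introduced in the preliminaries and sidesteps Helly's theorem, while the paper's route has the minor advantage of not needing the weak$^*$ topology at all.
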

\begin{proof}
Claim 1: $\operatorname{bc}_{1}((x_{n})_{n=1}^\infty)\leq \operatorname{bc}_{2}((x_{n})_{n=1}^\infty)$.

Given a sequence of scalars $a_1,\ldots, a_n$ so that $\|\sum_{i=1}^{n}a_{i}x_{i}\|\leq 1$ for all $n$, we have
\[
    |\sum_{i=1}^{n}b_{i}a_{i}|=|\langle \sum_{j=1}^{n}b_{j}x^{*}_{j},\sum_{i=1}^{n}a_{i}x_{i}\rangle|\leq \|\sum_{j=1}^{n}b_{j}x^{*}_{j}\|
\]
for all $n$ and all scalars $b_{1},b_{2},\ldots,b_{n}$. By Helly's theorem, there exists $\varphi\in V^{*}$ so that $\|\varphi\|\leq 1$ and $\langle\varphi,x^{*}_{n}\rangle=a_{n}$ for all $n$. This proves Claim 1.\smallskip

\noindent Claim 2: $\operatorname{bc}_{2}((x_{n})_{n=1}^\infty)\leq \operatorname{bc}_{3}((x_{n})_{n=1}^\infty)$.

Let $\varphi\in B_{V^{*}}$ and let $\varepsilon>0$. We choose $x^{**}\in X^{**}$ so that $x^{**}|_{V}=\varphi$ and $\|x^{**}\|\leq 1+\varepsilon$. Hence
\[
    \operatorname{ca}((\sum_{i=1}^{n}\langle \varphi,x^{*}_{i}\rangle x_{i})_{n})=(1+\varepsilon)\operatorname{ca}((\sum_{i=1}^{n}\langle \frac{x^{**}}{1+\varepsilon},x^{*}_{i}\rangle x_{i})_{n=1}^\infty)\leq (1+\varepsilon)\operatorname{bc}_{3}((x_{n})_{n=1}^\infty).
\]
Thus
\[
    \operatorname{bc}_{2}((x_{n})_{n=1}^\infty)\leq (1+\varepsilon)\operatorname{bc}_{3}((x_{n})_{n=1}^\infty).
\]
Letting $\varepsilon\rightarrow 0$, we arrive at the sought conclusion.\smallskip

\noindent Claim 3: $\operatorname{bc}_{3}((x_{n})_{n=1}^\infty)\leq K\operatorname{bc}_{1}((x_{n})_{n=1}^\infty)$. For this, observe that $P_{n}^{**}x^{**}=\sum_{i=1}^{n}\langle x^{**},x^{*}_{i}\rangle x_{i}$ for each $x^{**}\in X^{**}$.
Hence for every $x^{**}\in B_{X^{**}}$, $\|\sum_{i=1}^{n}\langle x^{**},x^{*}_{i}\rangle x_{i}\|\leq K$. This implies that
\[\operatorname{ca}((\sum_{i=1}^{n}\langle x^{**},x^{*}_{i}\rangle x_{i})_{n=1}^\infty)\leq K\operatorname{bc}_{1}((x_{n})_{n=1}^\infty),\]
which completes the proof.\end{proof}

\begin{theorem}\label{4.5}
Let $X$ be a Banach space with a basis $(x_{n})_{n=1}^\infty$. Then
\[
    \operatorname{sh}((x^{*}_{n})_{n=1}^{\infty})\leq \operatorname{bc}_{2}((x_{n})_{n=1}^\infty)\leq 2K^{2}\operatorname{sh}((x^{*}_{n})_{n=1}^{\infty}).
\]
\end{theorem}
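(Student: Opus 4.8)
The plan is to prove the two inequalities separately by exploiting the duality between the basis $(x_n)$ of $X$ and the basic sequence $(x_n^*)$ of $V$, whose associated biorthogonal functionals are $(jx_n)_n\subseteq V^*$. Fix $\varphi\in B_{V^*}$, write $a_i=\langle\varphi,x_i^*\rangle$ and $y_n=\sum_{i=1}^{n}a_i x_i$, so that $\operatorname{ca}((y_n)_n)$ is precisely the quantity whose supremum over $\varphi$ defines $\operatorname{bc}_2((x_n)_{n=1}^\infty)$. Applying the dual formulation \eqref{1} to the basis $(x_n^*)$ of $V$, the shrinking quantity $\operatorname{sh}((x_n^*)_{n=1}^{\infty})$ is governed by $\|\varphi\|_n=\operatorname{d}(\varphi,[jx_i:i\leq n])=\|\varphi|_{[x_i^*:i>n]}\|$. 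The engine of both estimates is the identity
\[
    \langle x^*,y_k-y_l\rangle=\langle\varphi,(P_k^*-P_l^*)x^*\rangle\qquad(x^*\in X^*,\ k>l),
\]
which follows immediately from $(P_k^*-P_l^*)x^*=\sum_{i=l+1}^{k}\langle x^*,x_i\rangle x_i^*$ together with the definition of the $a_i$.

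For the lower bound $\operatorname{sh}((x_n^*)_{n=1}^{\infty})\leq\operatorname{bc}_2((x_n)_{n=1}^\infty)$ I would estimate $\|\varphi\|_n$ by testing $\varphi$ against finitely supported tails. Given $u^*=\sum_{i=n+1}^{m}c_i x_i^*\in[x_i^*:i>n]$ with $\|u^*\|\leq 1$, the key point is that $u^*$ is its own reproducing functional: $\langle u^*,x_i\rangle=c_i$ for $n<i\leq m$ and $0$ otherwise, so that $(P_m^*-P_n^*)u^*=u^*$ and hence $\langle\varphi,u^*\rangle=\langle u^*,y_m-y_n\rangle$. Thus $|\langle\varphi,u^*\rangle|\leq\|y_m-y_n\|$, and since such $u^*$ are dense in the unit ball of $[x_i^*:i>n]$, one obtains $\|\varphi\|_n\leq\sup_{m>n}\|y_m-y_n\|\leq\sup_{k,l\geq n}\|y_k-y_l\|$. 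As the right-hand side is non-increasing in $n$, passing to $\limsup_n$ gives $\limsup_n\|\varphi\|_n\leq\operatorname{ca}((y_n)_n)$, and taking the supremum over $\varphi\in B_{V^*}$ yields the inequality.

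For the upper bound I would run the same identity in the opposite direction. For $k>l$ the block $(P_k^*-P_l^*)x^*$ lies in $[x_i^*:i>l]$ and has norm at most $\|P_k^*-P_l^*\|=\|P_k-P_l\|\leq 2K$, whence
\[
    \|y_k-y_l\|=\sup_{x^*\in B_{X^*}}|\langle\varphi,(P_k^*-P_l^*)x^*\rangle|\leq 2K\,\|\varphi\|_l.
\]
Consequently $\sup_{k,l\geq n}\|y_k-y_l\|\leq 2K\sup_{l\geq n}\|\varphi\|_l$, and taking $\inf_n$ gives $\operatorname{ca}((y_n)_n)\leq 2K\limsup_n\|\varphi\|_n$. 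The supremum over $\varphi\in B_{V^*}$ then produces $\operatorname{bc}_2((x_n)_{n=1}^\infty)\leq 2K\operatorname{sh}((x_n^*)_{n=1}^{\infty})\leq 2K^2\operatorname{sh}((x_n^*)_{n=1}^{\infty})$, the last step using $K\geq 1$; in fact this argument yields the sharper constant $2K$.

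I expect the main obstacle to be the bookkeeping of the duality rather than any hard estimate: one must confirm that the biorthogonal functionals of $(x_n^*)$ inside $V$ are genuinely $(jx_n)_n$, that $\|\varphi\|_n$ coincides with the restriction norm $\|\varphi|_{[x_i^*:i>n]}\|$ via \eqref{1}, and — most delicately — that in the lower bound the finitely supported tail $u^*$ can play two roles at once, as an element of $[x_i^*:i>n]$ realising $\|\varphi\|_n$ and as the testing functional $x^*$ reproducing the block $y_m-y_n$. The only quantitative inputs are the density of finitely supported vectors in the tail and the norm bound $\|P_k^*-P_l^*\|\leq 2K$, and it is exactly the latter that accounts for the factor separating the two sides.
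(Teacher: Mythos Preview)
Your proposal is correct. The lower bound is essentially the paper's Claim~1: both arguments evaluate $\varphi$ on a finitely supported tail $u^{*}\in[x_{i}^{*}\colon n<i\leq m]$ and recognise the resulting pairing as $\langle u^{*},y_{m}-y_{n}\rangle$; the paper phrases this via a block basic sequence $(f_{n})$, but the content is the same.

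For the upper bound your route is genuinely more direct than the paper's and yields the sharper constant $2K$ that you note. The paper first transfers the block $y_{k}-y_{l}$ to $V^{*}$ via the canonical map $j$, invoking the lower bound in \eqref{2} at the cost of a factor $K$, and only then tests against $f_{n}\in B_{V}$; projecting $f_{n}$ onto $[x_{i}^{*}\colon l<i\leq k]$ contributes the further factor $2K$, giving $2K^{2}$ in total. You bypass the $j$-step entirely by testing $y_{k}-y_{l}$ directly against $x^{*}\in B_{X^{*}}$ and observing that $(P_{k}^{*}-P_{l}^{*})x^{*}$ already lies in $[x_{i}^{*}\colon i>l]\subseteq V$ with norm at most $2K$, so the single projection bound suffices. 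Both approaches hinge on the same identity $\langle x^{*},y_{k}-y_{l}\rangle=\langle\varphi,(P_{k}^{*}-P_{l}^{*})x^{*}\rangle$; the paper's detour through $j$ is not needed, and your argument in fact establishes $\operatorname{bc}_{2}((x_{n})_{n})\leq 2K\operatorname{sh}((x_{n}^{*})_{n})$.
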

\begin{proof}Claim 1: for every $\varphi\in V^{*}$ we have $\limsup_{n}\|\varphi|_{[x^{*}_{i}\colon i>n]}\|\leq \operatorname{ca}((\sum_{i=1}^{n}\langle \varphi,x^{*}_{i}\rangle x_{i})_{n=1}^\infty)$.

Let $0<c<\limsup_{n}\|\varphi|_{[x^{*}_{i}\colon i>n]}\|$. Then there exists a block basic sequence $(f_{n})_{n=1}^\infty$ with respect to $(x^{*}_{n})_{n=1}^{\infty}$ so that $\|f_{n}\|\leq 1$ and $|\langle \varphi,f_{n}\rangle|>c$ for all $n$. Let us write $f_{n}=\sum_{i=k_{n-1}+1}^{k_{n}}\langle f_{n},x_{i}\rangle x^{*}_{i}$. Then we get
\[
    \Big\|\sum_{i=k_{n-1}+1}^{k_{n}}\langle \varphi,x_{i}^{*}\rangle x_{i}\Big\|\geq \Big|\big\langle f_{n},\sum_{i=k_{n-1}+1}^{k_{n}}\langle \varphi,x_{i}^{*}\rangle x_{i}\big\rangle\Big|=|\langle \varphi,f_{n}\rangle|>c \quad (n\in \mathbb N).
\]
This implies that $\operatorname{ca}((\sum_{i=1}^{n}\langle \varphi,x^{*}_{i}\rangle x_{i})_{n})>c$ and so Claim 1 is established.\smallskip

\noindent Claim 2: $\operatorname{ca}((\sum_{i=1}^{n}\langle \varphi,x^{*}_{i}\rangle x_{i})_{n=1}^\infty)\leq 2K^{2}\limsup_{n}\|\varphi|_{[x^{*}_{i}\colon i>n]}\|$ for every $\varphi\in V^{*}$.

Let $0<c<\operatorname{ca}((\sum_{i=1}^{n}\langle \varphi,x^{*}_{i}\rangle x_{i})_{n=1}^\infty)$. Then there exists a strictly increasing sequence of positive integers $(k_{n})_{n}$ so that $\|\sum_{i=k_{2n-1}+1}^{k_{2n}}\langle \varphi,x_{i}^{*}\rangle x_{i}\|>c$ for all $n$. It follows from (\ref{2}) that
\[
    \|\sum_{i=k_{2n-1}+1}^{k_{2n}}\langle \varphi,x_{i}^{*}\rangle jx_{i}\|>\frac{c}{K}\quad (n\in \mathbb N).
\]
For each $n$ we choose $f_{n}\in B_{V}$ so that
\[
    \Big|\big\langle \varphi,\sum_{i=k_{2n-1}+1}^{k_{2n}}\langle jx_{i},f_{n}\rangle x^{*}_{i}\big\rangle\Big|=\Big|\sum_{i=k_{2n-1}+1}^{k_{2n}}\langle \varphi,x_{i}^{*}\rangle \langle jx_{i},f_{n}\rangle\Big|>\frac{c}{K}.
\]
Since $\|\sum_{i=k_{2n-1}+1}^{k_{2n}}\langle jx_{i},f_{n}\rangle x^{*}_{i}\|\leq 2K$ for each $n$, we get $\|\varphi|_{[x^{*}_{i}\colon i>k_{2n-1}]}\|\geq \frac{c}{2K^{2}}$ and so $\limsup_{n}\|\varphi|_{[x^{*}_{i}\colon i>n]}\|\geq \frac{c}{2K^{2}}$. As $c$ was arbitrary, the proof is complete.\end{proof}

\begin{theorem}\label{4.6}
Let $X$ be a Banach space with a basis $(x_{n})_{n=1}^\infty$. Then
\[
    \frac{1}{2K}\operatorname{bc}_{2}((x^{*}_{n})_{n=1}^{\infty})\leq \operatorname{sh}((x_{n})_{n=1}^\infty)\leq K\operatorname{bc}_{2}((x^{*}_{n})_{n=1}^{\infty}).
\]
\end{theorem}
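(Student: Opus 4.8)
The plan is to prove both inequalities directly, exploiting the isomorphism $j\colon X\to W$ from \eqref{2} together with its transpose, rather than composing Theorem~\ref{4.5} with a change-of-basis estimate (which would only yield the weaker constant $2K^{3}$ in the lower bound). The point is that $j$ identifies $X^{*}$ with the dual $W^{*}$ of $W=[jx_{n}\colon n\in\mathbb N]$: writing $j^{*}\colon W^{*}\to X^{*}$ and $(j^{-1})^{*}\colon X^{*}\to W^{*}$ for the mutually inverse isomorphisms induced by the onto isomorphism $j\colon X\to W$, one reads off from \eqref{2} that $\|x^{*}\|\le\|(j^{-1})^{*}x^{*}\|\le K\|x^{*}\|$ and, dually, $\tfrac1K\|\psi\|\le\|j^{*}\psi\|\le\|\psi\|$. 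The crucial bookkeeping identity is $\langle\psi,jx_{i}\rangle=\langle j^{*}\psi,x_{i}\rangle$, so that the scalars $\langle\psi,jx_{i}\rangle$ featuring in $\operatorname{bc}_{2}((x^{*}_{n})_{n=1}^{\infty})$ are precisely the coordinates $\langle x^{*},x_{i}\rangle$ of the associated $x^{*}=j^{*}\psi\in X^{*}$. Throughout I use that $\operatorname{bc}_{2}((x^{*}_{n})_{n=1}^{\infty})=\sup_{\psi\in B_{W^{*}}}\operatorname{ca}\big((\sum_{i=1}^{n}\langle\psi,jx_{i}\rangle x^{*}_{i})_{n=1}^\infty\big)$, since the biorthogonal functionals of the basic sequence $(x^{*}_{n})_{n=1}^{\infty}$ are $(jx_{n})_{n}$, which span $W$.

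For the upper estimate $\operatorname{sh}((x_{n})_{n=1}^\infty)\le K\operatorname{bc}_{2}((x^{*}_{n})_{n=1}^{\infty})$, I fix $0<c<\operatorname{sh}((x_{n})_{n=1}^\infty)$ and extract, exactly as in the proof of Theorem~\ref{2.3}, an $x^{*}\in B_{X^{*}}$ and a normalised block basic sequence $(u_{m})_{m}$ of $(x_{n})_{n=1}^\infty$ with supports contained in disjoint intervals $(p_{m},q_{m}]$ and $\langle x^{*},u_{m}\rangle>c$ for all $m$. Setting $\psi=(j^{-1})^{*}x^{*}$, so that $\|\psi\|\le K$ and $\langle\psi,jx_{i}\rangle=\langle x^{*},x_{i}\rangle$, I consider the difference-blocks $V_{m}=\sum_{i=p_{m}+1}^{q_{m}}\langle\psi,jx_{i}\rangle x^{*}_{i}$ of the partial sums defining $\operatorname{ca}$. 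Because $u_{m}$ is supported inside $(p_{m},q_{m}]$, the pairing $\langle ju_{m},V_{m}\rangle$ collapses to $\langle x^{*},u_{m}\rangle>c$, while $\|ju_{m}\|\le\|u_{m}\|\le1$; hence $\|V_{m}\|>c$. As $p_{m}\to\infty$, this forces $\operatorname{ca}((\sum_{i=1}^{n}\langle\psi,jx_{i}\rangle x^{*}_{i})_{n})\ge c$, and dividing $\psi$ by $\|\psi\|\le K$ (using positive homogeneity of $\operatorname{ca}$) yields $\operatorname{bc}_{2}((x^{*}_{n})_{n=1}^{\infty})\ge c/K$. Letting $c\uparrow\operatorname{sh}((x_{n})_{n=1}^\infty)$ finishes this half.

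For the lower estimate $\operatorname{bc}_{2}((x^{*}_{n})_{n=1}^{\infty})\le 2K\operatorname{sh}((x_{n})_{n=1}^\infty)$, I fix $0<c<\operatorname{bc}_{2}((x^{*}_{n})_{n=1}^{\infty})$, choose $\psi\in B_{W^{*}}$ with $\operatorname{ca}((\sum_{i=1}^{n}\langle\psi,jx_{i}\rangle x^{*}_{i})_{n})>c$, and read off a block basic sequence $g_{m}=\sum_{i=l_{m}+1}^{k_{m}}\langle\psi,jx_{i}\rangle x^{*}_{i}$ of $(x^{*}_{n})_{n=1}^{\infty}$ with $\|g_{m}\|>c$ and $l_{m}\to\infty$. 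Put $x^{*}=j^{*}\psi\in B_{X^{*}}$ and pick $x_{m}\in B_{X}$ with $\langle g_{m},x_{m}\rangle>c$. Using $\langle\psi,jx_{i}\rangle=\langle x^{*},x_{i}\rangle$, the scalar $\langle g_{m},x_{m}\rangle$ equals $\langle x^{*},y_{m}\rangle$, where $y_{m}=(P_{k_{m}}-P_{l_{m}})x_{m}$ is a block of $(x_{n})_{n=1}^\infty$ supported in $(l_{m},k_{m}]$ with $\|y_{m}\|\le 2K$. Thus $y_{m}/(2K)\in B_{X}\cap[x_{i}\colon i>l_{m}]$ and $\langle x^{*},y_{m}/(2K)\rangle>c/(2K)$, so $\|x^{*}\|_{l_{m}}>c/(2K)$ by \eqref{1}; since $l_{m}\to\infty$ we obtain $\operatorname{sh}((x_{n})_{n=1}^\infty)\ge\limsup_{n}\|x^{*}\|_{n}\ge c/(2K)$, and $c$ was arbitrary.

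I expect the main obstacle to be the careful handling of the three ambient spaces and their identifications: the functionals tested by $\operatorname{bc}_{2}((x^{*}_{n})_{n=1}^{\infty})$ live in $W^{*}$ (the biorthogonal functionals of $(x^{*}_{n})_{n=1}^{\infty}$ being $(jx_{n})_{n}\subseteq W\subseteq V^{*}$), whereas $\operatorname{sh}((x_{n})_{n=1}^\infty)$ tests functionals in $B_{X^{*}}$, and one must transport data faithfully through $j^{*}$ and $(j^{-1})^{*}$ while tracking the two distinct sources of the constants — the distortion $K$ of $j$ from \eqref{2} and the bound $\|P_{k}-P_{l}\|\le 2K$ for a block projection — which are exactly what produce the asymmetric factors $K$ and $\tfrac{1}{2K}$. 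The remaining steps, namely the extraction of honest block basic sequences with disjoint supports from the $\limsup$ and from the $\operatorname{ca}$ conditions and their alignment with the partial-sum differences, are routine.
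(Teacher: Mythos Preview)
Your proof is correct and follows essentially the same approach as the paper's: in both directions you (and the paper) transport between $B_{X^{*}}$ and $B_{W^{*}}$ via the isomorphism $j$ and its adjoint, extract a block basic sequence from either the $\limsup$ or the $\operatorname{ca}$ condition, and use the bound $\|P_{k}-P_{l}\|\le 2K$ for the block projection in the lower estimate. The paper's $f\in W^{*}$ defined by $\langle f,jx\rangle=\langle x^{*},x\rangle$ is precisely your $\psi=(j^{-1})^{*}x^{*}$, and the paper's $x^{*}$ defined by $\langle x^{*},x\rangle=\langle f,jx\rangle$ is precisely your $x^{*}=j^{*}\psi$; the remaining steps are identical up to notation.
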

\begin{proof}
Let $0<c<\operatorname{bc}_{2}((x^{*}_{n})_{n=1}^{\infty})$. Take $f\in B_{W^{*}}$ so that $\textrm{ca}((\sum_{i=1}^{n}\langle f,jx_{i}\rangle x^{*}_{i})_{n=1}^\infty)>c$. We choose a strictly increasing sequence of integers $(k_{n})_{n=1}^\infty$ so that $\|\sum_{i=k_{2n-1}+1}^{k_{2n}}\langle f,jx_{i}\rangle x^{*}_{i}\|>c$ ($n\in \mathbb N$). For each $n$, we take $y_{n}\in B_{X}$ with $|\sum_{i=k_{2n-1}+1}^{k_{2n}}\langle f,jx_{i}\rangle \langle x^{*}_{i},y_{n}\rangle|>c$. Define $x^{*}\in X^{*}$ by $\langle x^{*},x\rangle=\langle f,jx\rangle (x\in X)$. Then $x^{*}\in B_{X^{*}}$ and
\[
    \Big|\langle x^{*},\sum_{i=k_{2n-1}+1}^{k_{2n}}\langle x^{*}_{i},y_{n}\rangle x_{i}\rangle\Big|=\Big|\sum_{i=k_{2n-1}+1}^{k_{2n}}\langle x^{*},x_{i}\rangle
\langle x^{*}_{i},y_{n}\rangle\Big|>c\quad (n\in \mathbb N).
\]
Note that
\[
    \|\sum_{i=k_{2n-1}+1}^{k_{2n}}\langle x^{*}_{i},y_{n}\rangle x_{i}\|\leq 2K \quad (n\in \mathbb N).
\]
Consequently,
\[\|x^{*}|_{[x_{i}\colon i>k_{2n-1}]}\|\geq \frac{c}{2K} \quad (n\in \mathbb N).
\]
This implies
\[
    \limsup_{n\to\infty}\|x^{*}|_{[x_{i}\colon i>n]}\|\geq \frac{c}{2K}
\]
so $\operatorname{sh}((x_{n})_{n=1}^\infty)\geq \frac{c}{2K}$. As $c$ was arbitrary, we arrive at the former inequality.

As to the latter one, let $0<c<\operatorname{sh}((x_{n})_{n=1}^\infty)$. Then there exists $x^{*}\in B_{X^{*}}$ so that $\limsup_{n}\|x^{*}|_{[x_{i}\colon i>n]}\|>c$.
We choose a block basic sequence $(u_{n})_{n=1}^\infty$ with respect to $(x_{n})_{n=1}^\infty$ so that $\|u_{n}\|\leq 1$ and $|\langle x^{*},u_{n}\rangle|>c$ for all $n$. Write each $u_{n}=\sum_{i=k_{n-1}+1}^{k_{n}}\langle x^{*}_{i},u_{n}\rangle x_{i}$.
We define $f\in W^{*}$ by $\langle f,jx\rangle=\langle x^{*},x\rangle$ ($x\in X$). By (\ref{2}), we get $\|f\|\leq K$. Moreover, we get
\[
    c<\Big|\sum_{i=k_{n-1}+1}^{k_{n}}\langle x^{*}_{i},u_{n}\rangle \langle x^{*},x_{i}\rangle\Big|
    =\Big|\langle \sum_{i=k_{n-1}+1}^{k_{n}}\langle f,jx_{i}\rangle x^{*}_{i},u_{n}\rangle\Big|
    \leq \Big\|\sum_{i=k_{n-1}+1}^{k_{n}}\langle f,jx_{i}\rangle x^{*}_{i}\Big\|\quad (n\in \mathbb N).
\]
Thus $\textrm{ca}((\sum_{i=1}^{n}\langle f,jx_{i}\rangle x^{*}_{i})_{n=1}^\infty)\geq c$ and then $\operatorname{bc}_{2}((x^{*}_{n})_{n=1}^{\infty})\geq \frac{c}{K}$. By the arbitrariness of $c$, we complete the proof. \end{proof}

\begin{theorem}\label{4.7}
Let $X$ be a Banach space with an unconditional basis $(x_{n})_{n=1}^\infty$. Then
\[
    \frac{1}{K_{u}}\alpha_{c_{0}}(X)\leq \operatorname{bc}_{1}((x_{n})_{n=1}^\infty)\leq K_{u}^{3}\alpha_{c_{0}}(X).
\]
\end{theorem}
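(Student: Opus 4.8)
The plan is to prove the two inequalities separately, in each direction passing between a $c_0$-embedding and a witnessing sequence for $\operatorname{bc}_1$. Unconditionality will enter only through the multipliers $M_\theta$ and the coordinate projections they generate: for any set $S\subseteq\mathbb N$ the projection $P_S$ onto the coordinates in $S$ equals $M_\lambda$ with $\lambda_i=\mathbf 1_{i\in S}\in[-1,1]$, and by convexity $\|M_\lambda\|\le K_u$ whenever $\sup_i|\lambda_i|\le1$. I shall use $\|P_S\|\le K_u$ repeatedly.

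For the upper estimate $\operatorname{bc}_1((x_n))\le K_u^3\alpha_{c_0}(X)$, I would fix $0<c<\operatorname{bc}_1((x_n))$ and scalars $(a_i)$ with $\|\sum_{i=1}^n a_ix_i\|\le1$ for all $n$ and $\operatorname{ca}((\sum_{i=1}^n a_ix_i)_n)>c$. Unwinding the definition of $\operatorname{ca}$ produces a strictly increasing sequence $(k_n)$ so that the consecutive blocks $v_n:=\sum_{i=k_{2n-1}+1}^{k_{2n}}a_ix_i$ obey $\|v_n\|>c$. I then check that $(v_n)$ is equivalent to the unit vector basis of $c_0$. The lower estimate follows by applying $P_{I_{n_0}}$ (of norm $\le K_u$) to $\sum_n b_nv_n$, which recovers $b_{n_0}v_{n_0}$ and gives $\|\sum_n b_nv_n\|\ge\frac{c}{K_u}\max_n|b_n|$. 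For the upper estimate I would write $\sum_n b_nv_n=M_\lambda\big(M\sum_{i\in\bigcup_n I_n}a_ix_i\big)$, where $M=\max_n|b_n|$ and $|\lambda_i|\le1$; since $\sum_{i\in\bigcup_n I_n}a_ix_i$ is a coordinate projection of a partial sum of norm $\le1$, it has norm $\le K_u$, whence $\|\sum_n b_nv_n\|\le K_u^2\max_n|b_n|$. Defining $T\colon c_0\to X$ by $Te_n=v_n$ and normalising by $K_u^2$ yields $\alpha_{c_0}(X)\ge c/K_u^3$; letting $c\uparrow\operatorname{bc}_1((x_n))$ completes this direction.

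For the lower estimate $\frac{1}{K_u}\alpha_{c_0}(X)\le\operatorname{bc}_1((x_n))$, I would fix $0<c<\alpha_{c_0}(X)$ and an embedding $T\colon c_0\to X$ with $c\|z\|\le\|Tz\|\le\|z\|$. The vectors $u_n=Te_n$ are weakly null (as $(e_n)$ is weakly null in $c_0$ and $T$ is weak-to-weak continuous) and semi-normalised, $c\le\|u_n\|\le1$. Applying the Bessaga--Pe\l czy\'nski selection principle I would pass to a subsequence and a block basic sequence $(w_k)$, say $w_k=\sum_{i\in J_k}a_ix_i$ with $\max\operatorname{supp}(w_k)<\min\operatorname{supp}(w_{k+1})$, so that $\sum_k\|u_{n_k}-w_k\|<\varepsilon$. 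Extending $(a_i)$ by zero off $\bigcup_k J_k$ gives the candidate $\operatorname{bc}_1$-witness. Its block-boundary partial sums $S_N=\sum_{k\le N}w_k$ satisfy $\|S_M-S_N\|\ge c-\varepsilon$ by the lower $c_0$-estimate for $(u_n)$, and since $\operatorname{ca}$ of the full partial-sum sequence dominates that of any subsequence, $\operatorname{ca}((\sum_{i\le m}a_ix_i)_m)\ge c-\varepsilon$.

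The main obstacle — and the step I would be most careful about — is controlling the \emph{intermediate} partial sums (those ending strictly inside a block $J_{k_0}$), which is exactly what the normalisation in the definition of $\operatorname{bc}_1$ requires. I would handle this by writing each $\sum_{i\le m}a_ix_i$ with $m\in J_{k_0}$ as the coordinate projection $P_{[1,m]\cap\bigcup_k J_k}(S_{k_0})$, so its norm is at most $K_u\|S_{k_0}\|\le K_u(1+\varepsilon)$ by the upper $c_0$-estimate for $(u_n)$. Dividing the witness by $K_u(1+\varepsilon)$, letting $\varepsilon\to0$ and then $c\uparrow\alpha_{c_0}(X)$, gives $\operatorname{bc}_1((x_n))\ge\alpha_{c_0}(X)/K_u$. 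The delicacy is that the whole extra factor $K_u$ originates from this single estimate: routing the intermediate partial sum through one coordinate projection of $S_{k_0}$ keeps the constant at $K_u$, whereas the naive split into the last completed block plus a partial block would only yield the worse bound $1+K_u$.
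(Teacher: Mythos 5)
Your proposal is correct and follows essentially the same route as the paper: the same extraction of blocks $u_n$ from a $\operatorname{bc}_1$-witness with the two-sided $c_0$-estimate $\frac{c}{K_u}\max|t_n|\leq\|\sum t_nu_n\|\leq K_u^2\max|t_n|$ obtained via coordinate projections and multipliers, and, in the converse direction, the same use of the quantitative Bessaga--Pe{\l}czy\'{n}ski selection principle followed by the $K_u$-bound on intermediate partial sums (the paper's appeal to \cite[Proposition 1.c.7]{LT}). The constants match in both directions, so there is nothing to add.
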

\begin{proof}
Let $0<c<\alpha_{c_{0}}(X)$. Then there exists a sequence $(y_{n})_{n=1}^\infty$ in $X$ so that
\begin{equation}\label{3}
c\cdot\max_{1\leq i\leq n}|t_{i}|\leq \|\sum_{i=1}^{n}t_{i}y_{i}\|\leq \max_{1\leq i\leq n}|t_{i}|\quad (n\in\mathbb N,\, t_{1},t_{2},\ldots,t_{n}\in \mathbb K).
\end{equation}

Let $\varepsilon>0$. It follows from a quantitative version of the Bessaga--Pe{\l}czy\'{n}ski Selection Principle (\cite[Lemma 2.3]{Chen}) that there exist a subsequence $(y_{k_{n}})_{n=1}^\infty$ of $(y_{n})_{n=1}^\infty$ and a block basic sequence $(u_{n})_{n=1}^\infty$ with respect to $(x_{n})_{n=1}^\infty$ so that
\begin{equation}\label{4}
(1-\varepsilon)\|\sum_{i=1}^{n}t_{i}u_{i}\|\leq \|\sum_{i=1}^{n}t_{i}y_{k_{i}}\|\leq (1+\varepsilon)\|\sum_{i=1}^{n}t_{i}u_{i}\|\quad (n\in \mathbb N,\, t_{1},t_{2},\ldots,t_{n}\in \mathbb K)
\end{equation}
Combining (\ref{3}) and (\ref{4}), we get
\begin{equation}\label{5}
\frac{c}{1+\varepsilon}\max_{1\leq i\leq n}|t_{i}|\leq\|\sum_{i=1}^{n}t_{i}u_{i}\|\leq \frac{1}{1-\varepsilon}\max_{1\leq i\leq n}|t_{i}|\quad (n\in \mathbb N,\, t_{1},t_{2},\ldots,t_{n}\in \mathbb K).
\end{equation}

Write $u_{n}=\sum_{i=k_{n-1}+1}^{k_{n}}a_{i}x_{i}$. By \cite[Proposition 1.c.7]{LT} and (\ref{5}), we get
\begin{equation}\label{6}
\|\sum_{j=1}^{n}a_{j}x_{j}\|\leq K_{u}\|\sum_{j=1}^{n}u_{j}\|\leq \frac{K_{u}}{1-\varepsilon} \quad (n\in \mathbb N)
\end{equation}
and
\begin{equation}\label{7}
\|\sum_{i=k_{n-1}+1}^{k_{n}}a_{i}x_{i}\|=\|u_{n}\|\geq \frac{c}{1+\varepsilon} \quad (n\in \mathbb N)
\end{equation}
Using \eqref{6} and \eqref{7}, we arrive at
\[
    \textrm{bc}_{1}((x_{n})_{n=1}^\infty)\geq \frac{c}{1+\varepsilon}\frac{1-\varepsilon}{K_{u}}.
\]
Letting $\varepsilon\rightarrow 0$, we get $\textrm{bc}_{1}((x_{n})_{n=1}^\infty)\geq \frac{c}{K_{u}}$. Since $c$ is arbitrary, we arrive at the former inequality.

It remains to verify the latter one. Let $0<c<\textrm{bc}_{1}((x_{n})_{n=1}^\infty)$. Then there exists a scalar sequence $(a_{n})_{n=1}^\infty$ so that $\|\sum_{i=1}^{n}a_{i}x_{i}\|\leq 1$ for all $n$ and $\textrm{ca}((\sum_{i=1}^{n}a_{i}x_{i})_{n=1}^\infty)>c$. We choose $k_{1}<k_{2}<\ldots<k_{n}<\ldots$ so that $\|\sum_{i=k_{2n-1}+1}^{k_{2n}}a_{i}x_{i}\|>c$ for all $n$. Let $u_{n}=\sum_{i=k_{2n-1}+1}^{k_{2n}}a_{i}x_{i}$. Then $\|u_{n}\|>c$ for every $n$.
Given a finite choice of scalars $(t_{n})_{n=1}^{m}$. Appealing again to \cite[Proposition 1.c.7]{LT}, we get
\begin{align*}
    \|\sum_{n=1}^{m}t_{n}u_{n}\|&=\|\sum_{n=1}^{m}\sum_{i=k_{2n-1}+1}^{k_{2n}}t_{n}a_{i}x_{i}\|\\
    &\leq K_{u}\max_{1\leq n\leq m}|t_{n}|\|\sum_{n=1}^{m}u_{n}\|\\
    &\leq K_{u}\max_{1\leq n\leq m}|t_{n}|K_{u}\|\sum_{i=1}^{k_{2m}}a_{i}x_{i}\|\\
    &\leq K_{u}^{2}\max_{1\leq n\leq m}|t_{n}|.\\
\end{align*}
On the other hand, for each $1\leq n\leq m$, we get
\[
    c|t_{n}|\leq \|t_{n}u_{n}\|\leq K_{u}\|\sum_{n=1}^{m}t_{n}u_{n}\|.
\]
Hence
\[
    c\cdot \max_{1\leq n\leq m}|t_{n}|\leq K_{u}\|\sum_{n=1}^{m}t_{n}u_{n}\|.
\]
In conclusion,
\begin{equation}\label{8}
\frac{c}{K_{u}}\max_{1\leq n\leq m}|t_{n}|\leq \|\sum_{n=1}^{m}t_{n}u_{n}\|\leq K_{u}^{2}\max_{1\leq n\leq m}|t_{n}|
\end{equation}
We define an operator $T\colon c_{0}\rightarrow X$ by $e_{n}\mapsto \frac{1}{K_{u}^{2}}u_{n}$. By (\ref{8}), $\|T\|\leq 1$ and $\|T^{-1}\|\leq \frac{K_{u}^{3}}{c}.$
Hence $\alpha_{c_{0}}(X)\geq \frac{c}{K^{3}_{u}}$. The arbitrariness of $c$ completes the proof.
\end{proof}

\section{Quantifications of reflexivity}

\begin{theorem}\label{5.1}
Let $X$ be Banach space with a basis $(x_{n})_{n=1}^\infty$. Then
\begin{enumerate}[before=\itshape,font=\normalfont]
    \item $\operatorname{bc}_{3}((x_{n})_{n=1}^\infty)\leq 2K^{2}\operatorname{wk}_{X}(B_{X}).$
    \item $\operatorname{sh}((x_{n})_{n=1}^\infty)\leq 4K^{3}\operatorname{wk}_{X}(B_{X}).$
\end{enumerate}
\end{theorem}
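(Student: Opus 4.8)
The plan is to prove (1) directly from the definition of $\operatorname{wk}_X$ via Goldstine's theorem, and then to obtain (2) by applying (1) to the dual basic sequence $(x_n^*)$ and transporting the resulting measure of weak non-compactness from $V$ back to $X$ through the inequalities \eqref{14}--\eqref{15}.

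For (1), I would fix $x^{**}\in B_{X^{**}}$ and recall from the proof of Theorem~\ref{4.4} that $P_n^{**}x^{**}=\sum_{i=1}^n\langle x^{**},x_i^*\rangle x_i$, so the sequence appearing in $\operatorname{bc}_3$ is exactly $(P_n^{**}x^{**})_n$; this lies in $X$ and is bounded by $K$. By Goldstine's theorem $B_{X^{**}}$ is the weak$^*$-closure of $B_X$, so $\operatorname{d}(x^{**},X)\le\operatorname{wk}_X(B_X)$, and I pick $x\in X$ with $\|x^{**}-x\|$ as close to this distance as desired. Using that $P_n^{**}$ extends $P_n$, I split $P_n^{**}x^{**}=P_n^{**}(x^{**}-x)+P_nx$: the second summand converges in norm to $x$ and hence contributes $0$ to $\operatorname{ca}$, while the first has norm at most $K\|x^{**}-x\|$, so any two of its terms differ by at most $2K\|x^{**}-x\|$. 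Subadditivity of $\operatorname{ca}$ then gives $\operatorname{ca}((P_n^{**}x^{**})_n)\le 2K\|x^{**}-x\|$; letting $x$ approach the distance and taking the supremum over $x^{**}$ yields $\operatorname{bc}_3((x_n)_{n=1}^\infty)\le 2K\operatorname{wk}_X(B_X)$, which in particular gives the stated $2K^2\operatorname{wk}_X(B_X)$ (the argument even produces the slightly sharper constant $2K$).

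For (2), I would assemble the chain already available. The right-hand inequality of Theorem~\ref{4.6} gives $\operatorname{sh}((x_n)_{n=1}^\infty)\le K\operatorname{bc}_2((x_n^*)_{n=1}^\infty)$, and Theorem~\ref{4.4} applied to the basis $(x_n^*)$ of $V$ gives $\operatorname{bc}_2((x_n^*)_{n=1}^\infty)\le\operatorname{bc}_3((x_n^*)_{n=1}^\infty)$. Applying part (1) to the basis $(x_n^*)$ of $V$—whose basis constant is at most $K$ and whose biorthogonal system is $(jx_n)$—gives $\operatorname{bc}_3((x_n^*)_{n=1}^\infty)\le 2K^2\operatorname{wk}_V(B_V)$. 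It then remains to replace $\operatorname{wk}_V(B_V)$ by $\operatorname{wk}_X(B_X)$: since $B_V\subseteq B_{X^*}$, elementary monotonicity of $\operatorname{wk}$ and \eqref{15} give $\operatorname{wk}_{X^*}(B_V)\le\operatorname{wk}_{X^*}(B_{X^*})=\operatorname{wk}_X(B_X)$, while the right-hand inequality of \eqref{14}, applied to the subspace $V\subseteq X^*$, gives $\operatorname{wk}_V(B_V)\le 2\operatorname{wk}_{X^*}(B_V)$. Combining these, $\operatorname{wk}_V(B_V)\le 2\operatorname{wk}_X(B_X)$, and the full chain collapses to $\operatorname{sh}((x_n)_{n=1}^\infty)\le K\cdot 2K^2\cdot 2\operatorname{wk}_X(B_X)=4K^3\operatorname{wk}_X(B_X)$.

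The norm estimates and the subadditivity of $\operatorname{ca}$ are routine. The step I expect to need the most care is the passage from $\operatorname{wk}_V(B_V)$ to $\operatorname{wk}_X(B_X)$ in (2): one must invoke the non-symmetric embedding inequalities \eqref{14} for the subspace $V$ of $X^*$ and track the factor-$2$ losses there and in the duality identity \eqref{15}, since it is precisely these factors, together with the $2K^2$ of part (1) and the $K$ of Theorem~\ref{4.6}, that produce the stated constant $4K^3$. A secondary point to check is that part (1) genuinely applies to $(x_n^*)$, i.e.\ that it is a basis of the Banach space $V$ with biorthogonal functionals $(jx_n)$, so that $\operatorname{bc}_3((x_n^*)_{n=1}^\infty)$ is the supremum over $B_{V^{**}}$ exactly as the definition demands.
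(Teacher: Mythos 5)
Your proof is correct. Part (2) follows the paper verbatim: combine Theorem~\ref{4.6}, Theorem~\ref{4.4} and part (1) applied to the basis $(x_{n}^{*})_{n=1}^\infty$ of $V$ to get $\operatorname{sh}((x_{n})_{n=1}^\infty)\leq 2K^{3}\operatorname{wk}_{V}(B_{V})$, then pass from $V$ to $X^{*}$ via \eqref{14} (losing the factor $2$) and to $X$ via \eqref{15}; your bookkeeping of the constants is exactly the paper's. For part (1), however, you take a genuinely different route. The paper argues from below: given $c<\operatorname{bc}_{3}((x_{n})_{n=1}^\infty)$ it extracts blocks $\sum_{i=k_{2n-1}+1}^{k_{2n}}\langle x^{**},x_{i}^{*}\rangle x_{i}$ of norm greater than $c$, picks norming functionals $f_{n}\in B_{V}$, and shows $2K\|x^{**}-x\|\geq c/K$ for every $x\in X$; the detour through $B_{V}$ is where the extra factor of $K$ (hence $2K^{2}$) enters. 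You instead bound $\operatorname{ca}((P_{n}^{**}x^{**})_{n=1}^\infty)$ from above directly, splitting $P_{n}^{**}x^{**}=P_{n}^{**}(x^{**}-x)+P_{n}x$ with $x\in X$ nearly realizing $\operatorname{d}(x^{**},X)$: the second summand is norm-convergent, the first is uniformly bounded by $K\|x^{**}-x\|$, and subadditivity of $\operatorname{ca}$ (valid because the tail suprema are non-increasing, so the infima add) yields $\operatorname{ca}((P_{n}^{**}x^{**})_{n=1}^\infty)\leq 2K\operatorname{d}(x^{**},X)\leq 2K\operatorname{wk}_{X}(B_{X})$, using Goldstine to identify $\operatorname{wk}_{X}(B_{X})$ with $\widehat{\operatorname{d}}(B_{X^{**}},X)$. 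This is both simpler and sharper: it improves the constant in (1) from $2K^{2}$ to $2K$, and if propagated through your chain for (2) it would improve $4K^{3}$ to $4K^{2}$. The secondary point you flag --- that $(x_{n}^{*})_{n=1}^\infty$ is a basis of $V$ with basis constant at most $K$ and biorthogonal system $(jx_{n})_{n=1}^\infty$ --- is indeed needed and is recorded in the paper's preliminaries, so nothing is missing.
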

\begin{proof}
(1). Let $0<c<\operatorname{bc}_{3}((x_{n})_{n=1}^\infty)$. Then there exist $x^{**}\in B_{X^{**}}$ and a strictly increasing sequence $(k_{n})_{n=1}^\infty$ so that
\[
    \Big\|\sum_{i=k_{2n-1}+1}^{k_{2n}}\langle x^{**},x^{*}_{i}\rangle x_{i}\Big\|>c \quad (n\in \mathbb N).
\]
By \eqref{2}, for each $n$ we choose $f_{n}\in B_{V}$ so that
\[
    \Big|\big\langle f_{n},\sum_{i=k_{2n-1}+1}^{k_{2n}}\langle x^{**},x^{*}_{i}\rangle x_{i}\big\rangle\Big|>\frac{c}{K} \quad (n\in \mathbb N).
\]
We \emph{claim} that $\textrm{d}(x^{**},X)\geq \frac{c}{2K^{2}}$.

Indeed, for every $x\in X$, we get
\begin{align*}
2K\|x^{**}-x\|&\geq |\langle x^{**},\sum_{i=k_{2n-1}+1}^{k_{2n}}\langle f_{n},x_{i}\rangle x^{*}_{i}\rangle -\langle \sum_{i=k_{2n-1}+1}^{k_{2n}}\langle f_{n},x_{i}\rangle x^{*}_{i},x\rangle|\\
&=|\langle f_{n},\sum_{i=k_{2n-1}+1}^{k_{2n}}\langle x^{**},x^{*}_{i}\rangle x_{i}\rangle-\langle \sum_{i=k_{2n-1}+1}^{k_{2n}}\langle f_{n},x_{i}\rangle x^{*}_{i},x\rangle|\\
&\geq \frac{c}{K}-|\langle f_{n},\sum_{i=k_{2n-1}+1}^{k_{2n}}\langle x^{*}_{i},x\rangle x_{i}\rangle|\\
&\geq \frac{c}{K}-\|\sum_{i=k_{2n-1}+1}^{k_{2n}}\langle x^{*}_{i},x\rangle x_{i}\|.
\end{align*}
Letting $n\rightarrow \infty$, we get $2K\|x^{**}-x\|\geq \frac{c}{K}$. This proves the claim. Consequently, we get $\operatorname{wk}_{X}(B_{X})\geq \frac{c}{2K^{2}}$. The arbitrariness of $c$ completes the proof.

(2). Combining Theorem \ref{4.6}, Theorem \ref{4.4} and (1), we get
\begin{equation}\label{12}
\operatorname{sh}((x_{n})_{n=1}^\infty)\leq 2K^{3}\operatorname{wk}_{V}(B_{V}).
\end{equation}
By (\ref{14}),
\begin{equation}\label{13}
\operatorname{wk}_{V}(B_{V})\leq 2\operatorname{wk}_{X^{*}}(B_{X^{*}})
\end{equation}
By (\ref{12}), (\ref{13}) and (\ref{15}), we get
\[
    \operatorname{sh}((x_{n})_{n=1}^\infty)\leq 4K^{3}\operatorname{wk}_{X^{*}}(B_{X^{*}})=4K^{3}\operatorname{wk}_{X}(B_{X}).
\]
This completes the proof.\end{proof}

\begin{theorem}\label{5.4}
Let $X$ be a Banach space with a basis $(x_{n})_{n=1}^\infty$.
\begin{enumerate}[before=\itshape,font=\normalfont]
    \item If $(x_{n})_{n=1}^\infty$ is boundedly complete, then
    \[
        \operatorname{wck}_{X}(B_{X})\leq (K+1)\widehat{\operatorname{d}}(B_{X^{*}},V).
    \]
    \item If $(x_{n})_{n=1}^\infty$ is shrinking, then
    \[
        \operatorname{wck}_{X}(B_{X})\leq (K+1)^{2}\operatorname{bc}_{2}((x_{n})_{n=1}^\infty)
    \]
\end{enumerate}
\end{theorem}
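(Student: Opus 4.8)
The plan is to handle the two parts separately, in each case estimating $\operatorname{wck}_{X}(B_{X})$ by taking a weak$^{*}$-cluster point $x^{**}$ of an arbitrary sequence $(y_{m})_{m}$ in $B_{X}$ and exhibiting a nearby point of $X$. Such a cluster point exists by Banach--Alaoglu and satisfies $\|x^{**}\|\le 1$, and since $\textrm{d}(\operatorname{clust}_{X^{**}}((y_{m})_{m}),X)=\inf\{\textrm{d}(s,X):s\in\operatorname{clust}_{X^{**}}((y_{m})_{m})\}$, it is enough to bound $\textrm{d}(x^{**},X)$ for each such $x^{**}\in B_{X^{**}}$.

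For (1), the first thing I would establish is that bounded completeness makes $j\colon X\to V^{*}$ surjective with $\|j^{-1}\|\le K$. Given $\varphi\in B_{V^{*}}$ and $a_{i}=\langle\varphi,x^{*}_{i}\rangle$, the identity $\|\sum_{i=1}^{n}a_{i}x_{i}\|=\sup_{x^{*}\in B_{X^{*}}}|\langle\varphi,P^{*}_{n}x^{*}\rangle|$, together with $P^{*}_{n}x^{*}\in V$ and $\|P^{*}_{n}\|\le K$, shows $\sup_{n}\|\sum_{i=1}^{n}a_{i}x_{i}\|\le K$; bounded completeness then provides $x=\sum_{i}a_{i}x_{i}\in X$ with $\|x\|\le K$ and $jx=\varphi$. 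Applying this to $\varphi=x^{**}|_{V}$ produces $x\in X$ with $\langle x^{**}-x,x^{*}\rangle=0$ for all $x^{*}\in V$ and $\|x^{**}-x\|\le K+1$. The distance estimate is then a one-line duality computation: for $x^{*}\in B_{X^{*}}$ and any $v^{*}\in V$ we have $\langle x^{**}-x,x^{*}\rangle=\langle x^{**}-x,x^{*}-v^{*}\rangle$, so $|\langle x^{**}-x,x^{*}\rangle|\le(K+1)\textrm{d}(x^{*},V)\le(K+1)\widehat{\operatorname{d}}(B_{X^{*}},V)$, and taking the supremum over $x^{*}\in B_{X^{*}}$ yields $\textrm{d}(x^{**},X)\le(K+1)\widehat{\operatorname{d}}(B_{X^{*}},V)$. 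Since by Goldstine every point of $B_{X^{**}}$ arises this way, this argument in fact bounds $\operatorname{wk}_{X}(B_{X})$.

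For (2), shrinkingness gives $V=X^{*}$, so $(x^{*}_{n})_{n}$ is a basis of $X^{*}$ and $B_{V^{*}}=B_{X^{**}}$ isometrically; hence directly from the definition $\operatorname{ca}((\sum_{i=1}^{n}\langle x^{**},x^{*}_{i}\rangle x_{i})_{n})\le\operatorname{bc}_{2}((x_{n})_{n})=:\beta$ for every $x^{**}\in B_{X^{**}}$. I would set $s_{n}=P^{**}_{n}x^{**}=\sum_{i=1}^{n}\langle x^{**},x^{*}_{i}\rangle x_{i}\in X$. Because $P^{*}_{n}x^{*}\to x^{*}$ in norm for every $x^{*}\in X^{*}$ (as $(x^{*}_{i})_{i}$ is a basis of $X^{*}$), one gets $\langle s_{n},x^{*}\rangle=\langle x^{**},P^{*}_{n}x^{*}\rangle\to\langle x^{**},x^{*}\rangle$, i.e.\ $s_{n}\to x^{**}$ in $\sigma(X^{**},X^{*})$. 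Fixing $\varepsilon>0$ and choosing $N$ with $\|s_{k}-s_{l}\|\le\beta+\varepsilon$ for all $k,l\ge N$, the point $s_{N}\in X$ works: since $s_{l}-s_{N}\to x^{**}-s_{N}$ weak$^{*}$, weak$^{*}$ lower semicontinuity of the norm gives $\|x^{**}-s_{N}\|\le\liminf_{l}\|s_{l}-s_{N}\|\le\beta+\varepsilon$. Letting $\varepsilon\to 0$ yields $\textrm{d}(x^{**},X)\le\beta$, hence $\operatorname{wck}_{X}(B_{X})\le\operatorname{bc}_{2}((x_{n})_{n})\le(K+1)^{2}\operatorname{bc}_{2}((x_{n})_{n})$.

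The main obstacle differs between the two parts. In (1) it is the passage from bounded completeness to an honest preimage $x\in X$ with $\|x\|\le K$ — that is, surjectivity of $j$ with the norm control $\|j^{-1}\|\le K$ — after which the geometry is immediate because $x^{**}-x$ annihilates $V$. In (2) the delicate point is converting the Cauchy-type control $\operatorname{ca}\le\beta$ on the partial sums $s_{n}\in X$ into a distance bound on the limit $x^{**}$, which may lie far from $X$; this is precisely where weak$^{*}$ lower semicontinuity of the norm combined with $s_{n}\to x^{**}$ weak$^{*}$ does the work, and where shrinkingness is indispensable.
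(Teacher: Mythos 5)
Your proof is correct. For part (1) you follow essentially the paper's route: the heart of the matter is that bounded completeness makes $j\colon X\to V^{*}$ surjective with $\|j^{-1}\|\leq K$, so that $x^{**}|_{V}$ has a preimage $x\in X$ with $\|x\|\leq K$, after which $x^{**}-x$ annihilates $V$ and the duality estimate $|\langle x^{**}-x,x^{*}\rangle|\leq (K+1)\operatorname{d}(x^{*},V)$ finishes the argument; the paper packages the same idea by extracting a subsequence of $(y_{m})_{m}$ to identify the coordinates $\langle x^{**},x^{*}_{i}\rangle$ and then choosing a norming functional, but the substance is identical. For part (2) your argument is genuinely different from, and sharper than, the paper's. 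The paper invokes James' theorem to see that $(x^{*}_{n})_{n}$ is a boundedly complete basis of $X^{*}$, applies part (1) to that dual basis together with Theorems \ref{2.1} and \ref{4.5}, and transfers back via $\operatorname{wck}_{X}(B_{X})=\operatorname{wck}_{X^{*}}(B_{X^{*}})$; this dualization is what produces the factor $(K+1)^{2}$. You instead work directly with the partial sums $s_{n}=P_{n}^{**}x^{**}\in X$, use shrinkingness to get $s_{n}\to x^{**}$ in $\sigma(X^{**},X^{*})$ (so that $\operatorname{bc}_{2}=\operatorname{bc}_{3}$ applies to every $x^{**}\in B_{X^{**}}$), and convert the Cauchy-type bound $\operatorname{ca}((s_{n})_{n})\leq\operatorname{bc}_{2}((x_{n})_{n})$ into a distance bound via weak$^{*}$ lower semicontinuity of the norm. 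This yields the stronger inequality $\operatorname{wk}_{X}(B_{X})\leq\operatorname{bc}_{2}((x_{n})_{n=1}^\infty)$ with constant $1$, from which the stated bound follows trivially since $(K+1)^{2}\geq 1$; the only thing the paper's route buys is that it recycles part (1) rather than introducing a new argument.
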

\begin{proof}
(1). Let $0<c<\operatorname{wck}_{X}(B_{X})$. Then there exists a sequence $(y_{n})_{n}$ in $B_{X}$ so that $\operatorname{d}(\textrm{clust}_{X^{**}}((y_{n})_{n}),X)>c.$ Take $x^{**}_{0}\in \textrm{clust}_{X^{**}}((y_{n})_{n})$. We may choose a strictly increasing sequence $(k_{n})_{n=1}^\infty$ so that $|\langle x^{**}_{0}-y_{k_{n}},x^{*}_{i}\rangle|<\frac{1}{n}$  $(i=1,2,\ldots,n)$ This implies that $\lim_{n\rightarrow \infty}\langle x^{*}_{i},y_{k_{n}}\rangle=\langle x^{**}_{0},x^{*}_{i}\rangle$ for each $i$. Note that, for each $m$, we get
\[
    \|\sum_{i=1}^{m}\langle x^{**}_{0},x^{*}_{i}\rangle x_{i}\|=\lim_{n\rightarrow \infty}\|\sum_{i=1}^{m}\langle x^{*}_{i},y_{k_{n}}\rangle x_{i}\|\leq K.
\]
By the hypothesis, $\sum_{i=1}^{\infty}\langle x^{**}_{0},x^{*}_{i}\rangle x_{i} =x_{0}$ for some $x_{0}\in X$. Moreover, $\|x_{0}\|\leq K$. Hence $\|x^{**}_{0}-x_{0}\|>c$. Take $x_{0}^{*}\in B_{X^{*}}$ so that $|\langle x^{**}_{0}-x_{0}, x^{*}_{0}\rangle|>c$. By the definition of $x_{0}$, we get $\langle x^{*}_{n},x_{0}\rangle=\langle x^{**}_{0},x^{*}_{n}\rangle$ for all $n$ and so $\langle x^{**}_{0}-x_{0}, x^{*}\rangle=0$ for all $x^{*}\in V$. Thus, for all $x^{*}\in V$, we get
\[
    (K+1)\|x^{*}_{0}-x^{*}\|\geq |\langle x^{**}_{0}-x_{0}, x^{*}_{0}-x^{*}\rangle|>c.
\]
This implies that
\[
    (K+1)\widehat{\operatorname{d}}(B_{X^{*}},V)\geq(K+1)\textrm{d}(x^{*}_{0},V)\geq c.
\]
As $c$ was arbitrarty, the proof of (1) is complete.

(2). Suppose that $(x_{n})_{n=1}^\infty$ is shrinking. It follows from \cite[Theorem 3]{James:1950} that $(x^{*}_{n})_{n=1}^{\infty}$ is a boundedly complete basis for $X^{*}$. By (1) and Theorem \ref{2.1}, we get
\[
    \operatorname{wck}_{X^{*}}(B_{X^{*}})\leq (K+1)^{2}\operatorname{sh}((x^{*}_{n})_{n=1}^{\infty}).
\]
By Theorem \ref{4.5},
\[
    \operatorname{wck}_{X^{*}}(B_{X^{*}})\leq (K+1)^{2}\operatorname{bc}_{2}((x_{n})_{n=1}^\infty).
\]
By \eqref{15}, we arrive at the conclusion.\end{proof}

\begin{theorem}\label{5.5}
Let $X$ be a Banach space with an unconditional basis $(x_{n})_{n=1}^\infty$.
\begin{enumerate}[before=\itshape,font=\normalfont]
    \item If $X$ contains no isomorphic copies of $\ell_{1}$, then
    \[
        \frac{1}{K_{u}^{3}K(K+1)^{2}}\operatorname{wck}_{X}(B_{X})\leq \alpha_{c_{0}}(X)\leq \alpha_{\ell_{1}}(X^{*})\leq\operatorname{wck}_{X}(B_{X}).
    \]
    \item If $X$ contains no isomorphic copies of $c_{0}$, then
    \[
        \frac{1}{K_{u}(K+1)^{2}}\operatorname{wck}_{X}(B_{X})\leq \alpha_{\ell_{1}}(X)\leq \operatorname{wck}_{X}(B_{X}).
    \]
\end{enumerate}
\end{theorem}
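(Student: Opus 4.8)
The plan is to obtain both displayed chains by concatenating the quantitative estimates already established, using the qualitative hypotheses only to trigger the classical James dichotomy that forces the basis to be shrinking in part (1) and boundedly complete in part (2).

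For part (1) I would first dispose of the two rightmost inequalities, which need no hypothesis. Since $X$ carries a basis it is separable, so Lemma~\ref{2.1.2}\eqref{i2:2.1.2} gives $\alpha_{c_{0}}(X)=\beta_{c_{0}}(X)$; then Lemma~\ref{2.1.2}\eqref{i1:2.1.2} applied with $Y=c_{0}$ (so $Y^{*}=\ell_{1}$) yields $\beta_{c_{0}}(X)\leq\beta_{\ell_{1}}(X^{*})\leq\alpha_{\ell_{1}}(X^{*})$, whence $\alpha_{c_{0}}(X)\leq\alpha_{\ell_{1}}(X^{*})$. Applying Lemma~\ref{2.1.2}\eqref{i3:2.1.2} to the space $X^{*}$ and invoking the identity $\operatorname{wck}_{X^{*}}(B_{X^{*}})=\operatorname{wck}_{X}(B_{X})$ from \eqref{15} gives $\alpha_{\ell_{1}}(X^{*})\leq\operatorname{wck}_{X}(B_{X})$.

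The substantive inequality in part (1) is the leftmost one. Here the assumption that $X$ contains no copy of $\ell_{1}$ lets me invoke James' Theorem~\ref{1.1} (the unconditional equivalence (v)$\Rightarrow$(i)) to conclude that $(x_{n})_{n=1}^\infty$ is shrinking. Shrinkingness activates Theorem~\ref{5.4}(2), giving $\operatorname{wck}_{X}(B_{X})\leq(K+1)^{2}\operatorname{bc}_{2}((x_{n})_{n=1}^\infty)$; I then pass through Theorem~\ref{4.4} (via $\operatorname{bc}_{2}\leq\operatorname{bc}_{3}\leq K\operatorname{bc}_{1}$) and finally Theorem~\ref{4.7} ($\operatorname{bc}_{1}((x_{n})_{n=1}^\infty)\leq K_{u}^{3}\alpha_{c_{0}}(X)$), so that $\operatorname{wck}_{X}(B_{X})\leq KK_{u}^{3}(K+1)^{2}\alpha_{c_{0}}(X)$, which is the claim. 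For part (2) the right-hand bound $\alpha_{\ell_{1}}(X)\leq\operatorname{wck}_{X}(B_{X})$ is exactly Lemma~\ref{2.1.2}\eqref{i3:2.1.2}; for the left-hand bound the hypothesis that $X$ contains no copy of $c_{0}$ lets me apply James' Theorem~\ref{1.2} (the unconditional equivalence (iii)$\Rightarrow$(i)) to conclude that $(x_{n})_{n=1}^\infty$ is boundedly complete. This activates Theorem~\ref{5.4}(1), $\operatorname{wck}_{X}(B_{X})\leq(K+1)\widehat{\operatorname{d}}(B_{X^{*}},V)$; chaining Theorem~\ref{2.1} ($\widehat{\operatorname{d}}(B_{X^{*}},V)\leq(K+1)\operatorname{sh}((x_{n})_{n=1}^\infty)$) with the unconditional estimate Theorem~\ref{2.3} ($\operatorname{sh}((x_{n})_{n=1}^\infty)\leq K_{u}\alpha_{\ell_{1}}(X)$) yields $\operatorname{wck}_{X}(B_{X})\leq K_{u}(K+1)^{2}\alpha_{\ell_{1}}(X)$.

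The only genuine obstacle is conceptual rather than computational: one must recognise that the purely qualitative hypotheses (no $\ell_{1}$, respectively no $c_{0}$) have to be fed into the classical non-quantitative James theorems to secure shrinkingness or bounded completeness, precisely because the quantitative machinery of Theorem~\ref{5.4} is available only under those structural assumptions. Once that bridge is crossed, both statements reduce to bookkeeping of the constants in the previously proved inequalities.
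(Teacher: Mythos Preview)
Your proposal is correct and follows essentially the same route as the paper: in both parts you use the qualitative hypothesis together with the classical James theorems (Theorems~\ref{1.1} and~\ref{1.2}) to obtain shrinkingness or bounded completeness, and then chain exactly the same quantitative ingredients (Theorems~\ref{5.4}, \ref{4.4}, \ref{4.7} for part~(1); Theorems~\ref{5.4}, \ref{2.1}, \ref{2.3} for part~(2)), while the rightmost inequalities come from Lemma~\ref{2.1.2} and \eqref{15}. Your treatment is in fact slightly more explicit than the paper's, which simply cites ``Lemma~\ref{2.1.2} and \eqref{15}'' for the two right-hand inequalities in part~(1) without spelling out the passage through $\beta_{c_{0}}(X)$.
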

\begin{proof}
(1). By Theorem \ref{1.1}, $(x_{n})_{n=1}^\infty$ is shrinking. Combining Theorem \ref{4.7}, Theorem \ref{4.4} and Theorem \ref{5.4} (2), we get
\[
    \alpha_{c_{0}}(X)\geq     \frac{1}{K_{u}^{3}}\operatorname{bc}_{1}((x_{n})_{n=1}^\infty)\geq \frac{1}{K_{u}^{3}K}\operatorname{bc}_{2}((x_{n})_{n=1}^\infty)
    \geq \frac{1}{K_{u}^{3}K(K+1)^{2}}\operatorname{wck}_{X}(B_{X}).
\]
The second and third inequalities of (1) follow from Lemma \ref{2.1.2} and (\ref{15}).

(2). The right inequality of (2) follows from Lemma \ref{2.1.2}. By Theorem \ref{1.2}, $(x_{n})_{n=1}^\infty$ is boundedly complete. By Theorem \ref{5.4}, Theorem \ref{2.1} and Theorem \ref{2.3}, we get
\[
    \operatorname{wck}_{X}(B_{X})\leq (K+1)\widehat{\operatorname{d}}(B_{X^{*}},V)
    \leq (K+1)^{2}\operatorname{sh}((x_{n})_{n=1}^\infty)\leq (K+1)^{2}K_{u}\alpha_{\ell_{1}}(X).
\]
The proof is complete.\end{proof}

\begin{theorem}
Let $X$ be a Banach space with an unconditional basis. Then
\[
    \frac{1}{K_{u}^{3}K(K+1)^{2}}\operatorname{wck}_{X}(B_{X})\leq\operatorname{sep}(B_{X^{**}})\leq \operatorname{wk}_{X}(B_{X}).
\]
\end{theorem}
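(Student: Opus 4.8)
The plan is to prove the two inequalities separately, exploiting that $X$ is separable (it carries a basis) and that, by Lemma~\ref{2.1.1} applied to $X^{**}$, the quantity $\operatorname{sep}(B_{X^{**}})$ is $\{0,1\}$-valued.

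\emph{The upper bound $\operatorname{sep}(B_{X^{**}})\le\operatorname{wk}_X(B_X)$.} Here I would use only separability of $X$. By Goldstine's theorem $\overline{B_X}^{\,\sigma(X^{**},X^*)}=B_{X^{**}}$, so $\operatorname{wk}_X(B_X)=\widehat{\operatorname{d}}(B_{X^{**}},X)$ and every $x^{**}\in B_{X^{**}}$ lies within $\operatorname{wk}_X(B_X)+\varepsilon$ of $X$. Fixing a countable dense set $D\subseteq X$ and approximating, for each such $x^{**}$, the chosen nearby point of $X$ by an element of $D$ to within $\varepsilon$, one obtains $B_{X^{**}}\subseteq D+(\operatorname{wk}_X(B_X)+2\varepsilon)B_{X^{**}}$ with $D$ a countable subset of $X^{**}$. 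Letting $\varepsilon\to0$ gives $\operatorname{sep}(B_{X^{**}})\le\operatorname{wk}_X(B_X)$.

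\emph{The lower bound.} The key auxiliary fact is that $\alpha_{c_0}(X)\le\operatorname{sep}(B_{X^{**}})$ for \emph{every} Banach space $X$. To see this, take $0<c<\alpha_{c_0}(X)$ and an isomorphism $T\colon c_0\to X$ with $c\|z\|\le\|Tz\|\le\|z\|$; passing to biduals, $T^{**}\colon\ell_\infty\to X^{**}$ still satisfies $c\|w\|\le\|T^{**}w\|\le\|w\|$. The set $A=\{(\theta_n)\colon|\theta_n|=1\}\subseteq B_{\ell_\infty}$ is uncountable and $2$-separated, so $T^{**}(A)\subseteq B_{X^{**}}$ is an uncountable $2c$-separated subset; since no ball of radius $<c$ contains two of its points, a countable cover would miss uncountably many of them, whence $\operatorname{sep}(B_{X^{**}})\ge c$, and $c\to\alpha_{c_0}(X)$ proves the claim. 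I would then split into two cases. If $X$ contains no copy of $\ell_1$, Theorem~\ref{5.5}(1) gives $\operatorname{wck}_X(B_X)\le K_u^3K(K+1)^2\alpha_{c_0}(X)$, and combining with the claim yields the required estimate with exactly the stated constant. If instead $\ell_1$ embeds into $X$, then $\alpha_{\ell_1}(X)>0$, and by Theorem~\ref{2.2} together with Lemma~\ref{2.1.1} applied to $X^*$ we have $0<\alpha_{\ell_1}(X)\le\operatorname{sep}(B_{X^*})\le\operatorname{sep}(B_{X^{**}})$; as $\operatorname{sep}(B_{X^{**}})\in\{0,1\}$ this forces $\operatorname{sep}(B_{X^{**}})=1$, while $\operatorname{wck}_X(B_X)\le\operatorname{wk}_X(B_X)=\widehat{\operatorname{d}}(B_{X^{**}},X)\le1$, so the inequality holds because $K_u^3K(K+1)^2\ge1$.

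\emph{Main obstacle.} The two covering arguments are routine; the step needing care is the passage to the bidual in the claim $\alpha_{c_0}(X)\le\operatorname{sep}(B_{X^{**}})$—namely, checking that a bounded-below operator has a bounded-below bidual with the same constant, and that the image $T^{**}(A)$ of the sign set genuinely witnesses the non-separability of $B_{X^{**}}$ (this is where $\ell_\infty=c_0^{**}$ is used). The only conceptual hinge is the case split, which mirrors the dichotomy that for an unconditional basis non-reflexivity forces a copy of $c_0$ or of $\ell_1$; the constant $K_u^3K(K+1)^2$ is inherited verbatim from the no-$\ell_1$ case in Theorem~\ref{5.5}(1), the $\ell_1$ case contributing nothing beyond the bare $\{0,1\}$ dichotomy.
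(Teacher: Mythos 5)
Your proof is correct, and its skeleton coincides with the paper's: the upper bound via Goldstine's theorem and a countable dense subset of the separable space $X$ (the paper takes the rational span of the basis, you take an abstract dense set --- same argument), and the lower bound via the case split on whether $\ell_1$ embeds, with Theorem~\ref{5.5}(1) carrying the load in the no-$\ell_1$ case and the $\{0,1\}$-dichotomy of Lemma~\ref{2.1.1} disposing of the other case. The one genuinely different link is how you pass from $\alpha_{c_0}(X)$ to $\operatorname{sep}(B_{X^{**}})$: the paper uses the chain $\alpha_{c_0}(X)\le\alpha_{\ell_1}(X^{*})\le\operatorname{sep}(B_{X^{**}})$, where the first inequality comes from Lemma~\ref{2.1.2} (hence from the Dowling--Randrianantoanina--Turett theorem and separability of $X$) and the second from applying the first inequality of Theorem~\ref{2.2} to $X^{*}$; you instead prove $\alpha_{c_0}(X)\le\operatorname{sep}(B_{X^{**}})$ directly by taking biduals of a $c$-into-isomorphism $T\colon c_0\to X$ and observing that $T^{**}$ maps the uncountable $2$-separated sign set $A\subseteq B_{\ell_\infty}$ to an uncountable $2c$-separated subset of $B_{X^{**}}$. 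Both routes ultimately exploit the same set $A$; yours is more self-contained (it avoids Lemma~\ref{2.1.2}\,(ii) entirely), at the price of the standard but nontrivial fact that the bidual of an into-isomorphism is an into-isomorphism with the same lower bound, which you correctly flag as the step needing care. A minor stylistic difference in the $\ell_1$ case: the paper invokes James' distortion theorem to get $\alpha_{\ell_1}(X)=1$ exactly, whereas you only need $\alpha_{\ell_1}(X)>0$ before appealing to the dichotomy; your weaker input suffices.
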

\begin{proof}
Let $(x_{n})_{n=1}^\infty$ be an unconditional basis for $X$.
Let $c>\operatorname{wk}_{X}(B_{X})$ be arbitrary. Let $\mathcal Q$ be a countable dense subset of $\mathbb R$ and $C=\{\sum_{i=1}^{n}r_{i}x_{i}\colon n\in \mathbb{N},r_{1},r_{2},\ldots,r_{n}\in \mathcal{Q}\}$.
It is easy to see that $B_{X^{**}}\subseteq C+cB_{X^{**}}.$ Hence $\operatorname{sep}(B_{X^{**}})\leq c$. As $c$ was arbitrary, the proof of the second inequality is complete.

For the first inequality, we divide the proof into two cases. If $X$ contains an isomorphic copy of $\ell_{1}$, it follows from James' distortion theorem that $\alpha_{\ell_{1}}(X)=1$. By Theorem \ref{2.2}, we get $\operatorname{sep}(B_{X^*})=1$. By Lemma \ref{2.1.1}, $\operatorname{sep}(B_{X^{**}})=1$. The first inequality clearly holds. If $X$ contains no isomorphic copy of $\ell_{1}$, we get, by Theorem \ref{5.5} and Theorem \ref{2.2},
\[
    \frac{1}{K_{u}^{3}K(K+1)^{2}}\operatorname{wck}_{X}(B_{X})\leq \alpha_{\ell_{1}}(X^{*})\leq \operatorname{sep}(B_{X^{**}}).
\]
This completes the proof.\end{proof}

\end{document}